\setlist{nolistsep}
\newcommand{\ZZ}{\mathbb{Z}}
\newcommand{\NN}{\mathbb{N}}
\newcommand{\kk}{\Bbbk}
\newcommand{\Map}{\operatorname{Map}}
\newcommand{\Hom}{\operatorname{Hom}}
\newcommand{\Id}{\operatorname{Id}}
\renewcommand{\ge}{\geqslant}
\renewcommand{\le}{\leqslant}
\newcommand{\osigma}{\boldsymbol{\sigma}}
\newcommand{\osigmas}{\osigma^S}
\newcommand{\osigman}{\osigma^N}
\newcommand{\ow}{\overline{w}}
\newcommand{\oDelta}{\overline{\Delta}}
\newcommand{\emptyw}{\boldsymbol{\varepsilon}}
\newcommand{\sh}{\underset{\sigma}{\shuffle}}
\newcommand{\Sh}{\underset{-\sigma}{\shuffle}}
\renewcommand{\cshuffle}{\ensuremath{\rotatebox[origin=c]{180}{$\shuffle$}}}
\newcommand{\csh}{\underset{\sigma}{\cshuffle}}
\newcommand{\Csh}{\underset{-\sigma}{\cshuffle}}
\newcommand{\Norm}{\mathbf{Norm}}
\newcommand{\SG}{\mathbf{SG}}
\newcommand{\Mon}{\mathbf{M}}
\newcommand{\oSG}{\overline{\SG}}
\newcommand{\oMon}{\overline{\Mon}}
\newcommand{\oNorm}{\overline{\Norm}}
\newcommand{\TotSh}{\mathcal{Q}\mathcal{S}}
\newcommand{\oTotSh}{\overline{\TotSh}}
\newcommand\longmapsfrom{\mathrel{\reflectbox{\ensuremath{\longmapsto}}}}
\newcommand*\rcircled[1]{$\,$\tikz[baseline=(char.base)]{
            \node[shape=circle,draw,inner sep=1pt] (char) {#1};}}
\definecolor{choco}{rgb}{.525,.27,.075}
\definecolor{mygreen}{rgb}{0,.455,0} 
\definecolor{myviolet}{rgb}{.45,.05,.545}
\definecolor{myred}{rgb}{.545,0,0}
\definecolor{myblue}{rgb}{.024,.15,.645} 
\colorlet{MyRed}{myred!40!white}
\colorlet{MyBlue}{myblue!40!white}
\colorlet{MyGreen}{mygreen!40!white}
\colorlet{MyViolet}{myviolet!40!white}
\definecolor{MyGrey}{rgb}{.804,.804,.756} 
  \theoremstyle{plain}
\newtheorem*{MR}{Main result}  
\newtheorem{theorem}{Theorem}
\newtheorem{proposition}{Proposition}[section]
\newtheorem{lemma}[proposition]{Lemma}
  \theoremstyle{remark}
\newtheorem{remark}[proposition]{Remark}
  \theoremstyle{definition}
\newtheorem{definition}[proposition]{Definition}
\newtheorem{notation}[proposition]{Notation}
\newtheorem{example}[proposition]{Example}
\begin{document}
  \title{Cohomology of idempotent braidings, \\ with applications to factorizable monoids}
  \author{Victoria LEBED}{University of Nantes}
  \email{lebed.victoria@gmail.com}
  \subjclass[2010]{16T25, 
  16E40, 
  55N35 
  }
  \keywords{Yang--Baxter equation, idempotent braiding, monoid factorization, $0$-Hecke monoid, Coxeter monoid, quadratic normalization, structure monoid, braided (co)homology, Hochschild (co)homology, shuffle (co)product, cup product, Steenrod operations, quantum symmetrizer}

  \maketitle

  \begin{abstract}  
We develop new methods for computing the Hochschild (co)homology of monoids which can be presented as the structure monoids of idempotent set-theoretic solutions to the Yang--Baxter equation. These include free and symmetric monoids; factorizable monoids, for which we find a generalization of the K\"{u}nneth formula for direct products; and plactic monoids. Our key result is an identification of the (co)homologies in question with those of the underlying YBE solutions, via the explicit quantum symmetrizer map. This partially answers questions of Farinati--Garc{\'{\i}}a-Galofre and Dilian Yang. We also obtain new structural results on the (co)homology of general YBE solutions.
  \end{abstract}

  \section{Introduction}

The {Yang--Baxter equation}~\eqref{E:YBE} plays a fundamental role in mathematical areas ranging from statistical mechanics to quantum field theory, from low-dimensional topology to quantum group theory. Attention to its set-theoretic solutions, called \emph{braidings}, dates back to Drinfel$'$d~\cite{DrST}. They cover an important part of the algebraic diversity of general solutions, while being more manageable.

Our original results mainly concern {idempotent braidings}. Of little interest in physics or topology, they do become useful in algebra. In particular, they provide a powerful unifying tool, simultaneously treating very different algebraic structures:
\begin{enumerate}
\item free and free commutative monoids;
\item factorizable monoids;
\item distributive lattices;
\item Young tableaux and plactic monoids.
\end{enumerate}
The first three are addressed here, the last one in a follow-up paper~\cite{LebedPlactic}. The third one generalizes to bounded Garside families (including Garside monoids); this is reserved for a separate paper as well. 
 One more reason to focus on idempotent braidings is the associated representations of {Coxeter monoids}\footnote{Also known as \emph{$0$-Hecke monoids}, they were defined and studied for all Coxeter groups. Since only the symmetric group case is relevant for us, we use simplified terms and notations.}. These monoids appeared in the work of Tsaranov~\cite{Tsaranov}, and since then were applied to Hecke algebras, to the Bruhat order on Coxeter groups, to Tits buildings, and to planar graphs \cite{FomGr,HST0Hecke,DolanTrimble,GanMaz,Kenney,Kenney2}. Another interesting feature of idempotent braidings is their interpretation inside the normalization paradigm of Dehornoy--Guiraud \cite{DehGui}. All these aspects are presented in Section~\ref{S:IdempotentBraiding}.

The first (co)homology theory for general braidings is due to Carter, Elham\-dadi, and Saito \cite{HomologyYB}. It was further developed and extended to solutions in any preadditive monoidal category by the author~\cite{Lebed1}. The motivation of~\cite{HomologyYB} was to generalize the powerful knot and knotted surface invariants constructed out of rack cocycles. Racks, and more generally self-distributive structures, yield a fundamental example of braidings. The goal of~\cite{Lebed1} was to unify the (co)homology theories of basic algebraic structures (associative and Lie algebras, racks, bialgebras, Hopf (bi)modules etc.) by interpreting them as YBE solutions. Quite surprisingly, the two approaches resulted in the same theory. It inherited a handy graphical calculus from knot theory, and a great level of generality from its unifying mission. Section~\ref{S:BrHom} is a reminder on these {braided (co)homology theories}. They have been extensively studied for various particular cases of braidings; see for example \cite{RackHom,FRS_BirackHom,BirackHom,PrzYBHom,LebedVendramin,LebedVendramin2,BikeiHom} and references therein. However, the idempotent case has until now remained unexplored.

Braided cohomology groups with suitable coefficients carry additional structure, which captures more information on the braiding. For instance, Farinati and Garc{\'{\i}}a-Galofre \cite{FarinatiGalofre} described a cup product for cohomology with trivial coefficients. In Section~\ref{S:BrHomCup} we extend it to more general coefficients. Further, we lift it to the cochain level, where in the case of trivial coefficients it is graded commutative up to a homotopy, explicitly described in Section~\ref{S:BrHomCircle}. The graded commutativity in cohomology follows. As an example, we show the group cohomology with trivial coefficients and its classical cup product to be a particular case of our braided constructions.

To any braiding one classically associates a monoid. This gives a rich source of quadratic groups and algebras with interesting properties, widely exploited in \cite{GIBergh,Rump,JesOknI,Chouraqui,DehYBE}. On the other hand, these monoids allow group-theoretic approaches to the classification of braidings \cite{ESS,Soloviev,LuYanZhu}. Farinati and Garc{\'{\i}}a-Galofre \cite{FarinatiGalofre} observed that the quantum symmetrizer $\TotSh$ connects the (co)homology of a braiding with that of its monoid. They showed $\TotSh$ to yield an isomorphism for involutive braidings in characteristic zero, and asked if this was true for other types of braidings. The same question was independently put by Dilian Yang \cite[Question 7.5]{YangGraphYBE}. We obtain a positive answer for idempotent braidings:
\begin{MR}
The quantum symmetrizer yields a quasi-isomorphism between a certain quotient of the braided chain complex for an idempotent braiding, and the Hochschild chain complex for its monoid, with the same coefficients. An analogous result holds in cohomology. For suitable coefficients, $\TotSh$ respects the cup products.
\end{MR}
See Section~\ref{S:BrHomSG} for a precise statement. In practice, braided complexes are considerably smaller than those coming from the bar resolution for the associated monoid. We thus get an efficient tool for computing Hochschild (co)homology. We apply it to free and free commutative monoids, where it recovers classical small resolutions; and to factorizable monoids, where it gives a generalization of the K\"{u}nneth formula. In~\cite{LebedPlactic}, our result yields efficient resolutions of plactic monoids, advancing the (co)homology computations of Lopatkin~\cite{Lopatkin}.
 
\medskip
\textbf{Acknowledgments.}
The author thanks Patrick Dehornoy for bringing her attention to the mysterious appearance of braidings in Lopatkin's work on plactic monoids, which was the starting point of this project; and Friedrich Wagemann for fruitful discussions on cup products over a cup of coffee. The author was supported by the program ANR-11-LABX-0020-01 and Henri Lebesgue Center (University of Nantes). 

\section{Idempotent braidings}\label{S:IdempotentBraiding}

A \emph{braided set} is a set~$X$ endowed with a \emph{braiding}\footnote{In the Introduction, we used the term \emph{braiding} for a braided set for brevity.}, i.e., a (non-invertible) solution $\sigma \colon X^{\times 2} \to X^{\times 2}$ to the \emph{Yang--Baxter equation} (= \emph{YBE})
\begin{equation}\label{E:YBE}
(\sigma \times \Id)  (\Id \times \sigma)  (\sigma \times \Id) = (\Id \times \sigma)  (\sigma \times \Id)  (\Id \times \sigma)
\end{equation}
on $X^{\times 3}$. An \emph{idempotent} braiding obeys the additional axiom $\sigma  \sigma = \sigma$. A braiding induces an action of the \emph{positive braid monoid} \begin{equation}\label{E:Bn}
B_k^+ = \langle \, b_1, \ldots, b_{k-1} \, | \, b_ib_j=b_jb_i \text{ for } |i-j| >1,  \, b_ib_{i+1}b_i = b_{i+1}b_ib_{i+1} \,\rangle^+
\end{equation}
on~$X^{\times k}$, for all $k \in \NN$, via
\begin{align*}
b_i &\mapsto \Id^{\times (i-1)} \times \sigma \times \Id^{\times (k-i-1)}.
\end{align*}
In the idempotent case, this action descends to the quotient
\begin{equation}\label{E:Cn}
C_k = \langle \, b_1, \ldots, b_{k-1} \, | \, b_ib_j=b_jb_i \text{ for } |i-j| >1,  \, b_ib_{i+1}b_i = b_{i+1}b_ib_{i+1}, \, b_ib_i=b_i \,\rangle^+
\end{equation}
of~$B_k^+$, referred to as \emph{Coxeter monoid}.

The \emph{graphical calculus} is extensively used in what follows, rendering our constructions more intuitive. Braided diagrams represent maps between sets, a set being associated to each strand; horizontal glueing corresponds to Cartesian product, vertical glueing to composition (read from bottom to top), vertical lines to identity maps, crossings to braidings, and trivalent vertices to (co)products. With these conventions, the YBE becomes the diagram from Figure~\ref{P:YBE}\rcircled{A}, which is the braid- and knot-theoretic Reidemeister~$\mathrm{III}$ move. A more advanced example is the classical extension~$\osigma$ of a braiding~$\sigma$ from~$X$ to the set~$X^*$ of words on the alphabet~$X$: its most concise definition is graphical (Figure~\ref{P:YBE}\rcircled{B}).
    
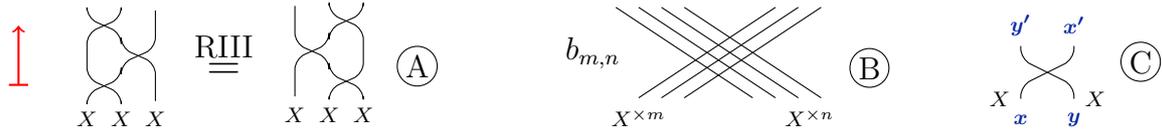
\begin{figure}[!h]\centering
\begin{tikzpicture}[xscale=0.45,yscale=0.4]
\draw [|->, red, thick]  (-2,0.5) -- (-2,2.5);
\draw [rounded corners] (0,0)--(0,0.25)--(1,0.75)--(1,1.25)--(2,1.75)--(2,3);
\draw [rounded corners] (1,0)--(1,0.25)--(0,0.75)--(0,2.25)--(1,2.75)--(1,3);
\draw [rounded corners] (2,0)--(2,1.25)--(1,1.75)--(1,2.25)--(0,2.75)--(0,3);
\node  at (0,0) [below] {$\scriptstyle X$};
\node  at (1,0) [below] {$\scriptstyle X$};
\node  at (2,0) [below] {$\scriptstyle X$};
\node  at (4,1.5){\Large $\overset{\mathrm{RIII}}{=}$};
\end{tikzpicture}
\begin{tikzpicture}[xscale=0.45,yscale=0.4]
\draw [rounded corners] (1,1)--(1,1.25)--(2,1.75)--(2,3.25)--(1,3.75)--(1,4);
\draw [rounded corners] (0,1)--(0,2.25)--(1,2.75)--(1,3.25)--(2,3.75)--(2,4);
\draw [rounded corners] (2,1)--(2,1.25)--(1,1.75)--(1,2.25)--(0,2.75)--(0,4);
\node  at (0,1) [below] {$\scriptstyle X$};
\node  at (1,1) [below] {$\scriptstyle X$};
\node  at (2,1) [below] {$\scriptstyle X$};
\node  at (3.5,2){\rcircled{A}};
\node  at (7,0){ };
\end{tikzpicture}
\begin{tikzpicture}[xscale=0.3,yscale=0.2]
\draw (1,0)--(7,6);
\draw (2,0)--(8,6);
\draw (3,0)--(9,6);
\draw (6,0)--(0,6);
\draw (7,0)--(1,6);
\draw (8,0)--(2,6);
\draw (9,0)--(3,6);
\node [below] at (1,0) {$\scriptstyle X^{\times m}$};
\node [below] at (8.5,0) {$\scriptstyle X^{\times n}$};
\node  at (-1,3){$ b_{m,n}$};
\node  at (11,2){\rcircled{B}};
\node  at (15,0){ };
\end{tikzpicture}
\begin{tikzpicture}[scale=0.7]
\draw [rounded corners] (0,0)--(0,0.25)--(1,0.75)--(1,1);
\draw [rounded corners] (1,0)--(1,0.25)--(0,0.75)--(0,1);
\node  at (0,-0.4) [myblue]  {$\scriptstyle \boldsymbol{x}$};
\node  at (1,-0.4) [myblue] {$\scriptstyle \boldsymbol{y}$};
\node  at (0,1.4) [myblue] {$\scriptstyle \boldsymbol{y'}$};
\node  at (1,1.4) [myblue] {$\scriptstyle \boldsymbol{x'}$};
\node  at (0,0) [left] {$\scriptstyle X$};
\node  at (1,0) [right] {$\scriptstyle X$};
\node  at (2.2,0.7){\rcircled{C}};
\end{tikzpicture}
\caption{A graphical version of the YBE; extension of a braiding from~$X$ to~$X^*$; color propagation through a crossing}\label{P:YBE}
\end{figure}

A diagram consisting exclusively of crossings also represents an element of~$B_n^+$ or~$C_n$; for instance, Figure~\ref{P:YBE}\rcircled{A} depicts the relation  $b_1b_{2}b_1 = b_{2}b_1b_{2}$ in~$B_3^+$, and Figure~\ref{P:YBE}\rcircled{B} an element $b_{m,n} \in B_{m+n}^+$. Note that a product $ab$ in~$B_n^+$ or~$C_n$ corresponds to a diagram representing~$a$ placed on top of that representing~$b$.

Associating \emph{colors} (i.e., arbitrary elements of the corresponding sets) to the bottom free ends of a diagram and applying to them the map encoded by the diagram, one determines the top colors. Figure~\ref{P:YBE}\rcircled{C} contains a simple case of this process, referred to as \textit{color propagation}: here the top colors are $(y',x') = \sigma (x,y)$. 

We now cite the key properties of Coxeter monoids. They involve the \emph{symmetric groups}~$S_k$, with their classical presentation
$$S_k = \langle \, s_1, \ldots, s_{k-1} \, | \, s_is_j=s_js_i \text{ for } |i-j| >1,  \, s_is_{i+1}s_i = s_{i+1}s_is_{i+1}, \, s_is_i= 1 \,\rangle.$$

\begin{lemma}[\cite{Tsaranov}]\label{L:CoxMon}
\begin{enumerate}
\item A set-theoretic bijection between~$C_k$ and~$S_k$ is established by sending any $b \in C_k$ with a shortest representation $b_{i_1}\cdots b_{i_n}$ to $s_{i_1}\cdots s_{i_n}$, which is in its turn a shortest representation of a word in~$S_k$.
\item The longest element
\begin{align}\label{E:Delta}
\Delta_k &= b_1 (b_2b_1) \cdots (b_{k-1} \cdots b_2b_1)
\end{align}
of~$C_k$ (Figure~\ref{P:Delta}) absorbs any $b \in C_k$:
\begin{align}\label{E:Absorb}
\Delta_k b = b \Delta_k &= \Delta_k.
\end{align}
\end{enumerate}
\end{lemma}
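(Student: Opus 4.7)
My plan is to establish a dictionary between $C_k$ and $S_k$ via the classical \emph{$0$-Hecke} right action of $C_k$ on $S_k$, and then read both parts off the Matsumoto--Tits theorem for the symmetric group. I will define $w \cdot b_i = w s_i$ when $\ell(w s_i) > \ell(w)$, and $w \cdot b_i = w$ otherwise, where $\ell$ denotes Coxeter length. Idempotency $b_i \cdot b_i = b_i$ is immediate, and the far commutation is clear; the braid relation $b_i b_{i+1} b_i = b_{i+1} b_i b_{i+1}$ is the only substantive verification, and I would dispatch it by a short case analysis on the descents of $w$ at positions $i,i+1$ via the exchange condition.

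For part~(1) I would then set $\phi(b) := 1 \cdot b$ and prove the pivotal claim: if $b = b_{i_1}\cdots b_{i_n}$ is a shortest representation in $C_k$, the $S_k$-word $s_{i_1}\cdots s_{i_n}$ is reduced, so the action gives $\phi(b) = s_{i_1}\cdots s_{i_n}$. The argument is by contradiction: if this word were not reduced, Matsumoto--Tits would transform it, using only commutation and braid moves, into a word containing a consecutive pair $s_j s_j$; the same moves applied in $C_k$ (where they also hold) yield $b_j b_j$, which collapses via $b_j b_j = b_j$, contradicting minimality. Surjectivity of $\phi$ then follows immediately from any reduced expression in $S_k$. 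For injectivity, two shortest representations of a single $b \in C_k$ produce, via $\phi$, two reduced expressions for one element of $S_k$, necessarily of the same length and connected by commutation/braid moves (Matsumoto--Tits once more); these moves equally hold in $C_k$, so the two $C_k$-representations agree.

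For part~(2) I would use that, under~$\phi$, the expression~\eqref{E:Delta} for $\Delta_k$ corresponds to the well-known reduced expression $s_1 (s_2 s_1) \cdots (s_{k-1}\cdots s_1)$ of the longest element $w_0 \in S_k$, whence $\phi(\Delta_k) = w_0$. Since $w_0$ has no right ascents, $w_0 \cdot b_i = w_0$ for every $i$, and an induction on the length of a shortest representation of $b \in C_k$ gives $w_0 \cdot b = w_0$, i.e., $\phi(\Delta_k b) = \phi(\Delta_k)$; injectivity of $\phi$ then forces $\Delta_k b = \Delta_k$. The mirror equality $b \Delta_k = \Delta_k$ I would derive from the symmetric left $0$-Hecke action on~$S_k$, using that $w_0$ has no left ascents either. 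The main technical hurdle throughout is the Matsumoto--Tits-based implication ``shortest in $C_k$ $\Rightarrow$ reduced in $S_k$'' underlying part~(1); once that bridge is built, part~(2) is essentially a one-line consequence of $w_0$ being longest.
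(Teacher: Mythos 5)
Your argument is correct, but note that the paper itself gives no proof of this lemma --- it is cited from Tsaranov --- so there is nothing internal to compare against. Your route (the right $0$-Hecke/Demazure action $w\cdot b_i = ws_i$ if $\ell(ws_i)>\ell(w)$ and $w\cdot b_i=w$ otherwise, combined with Tits' solution of the word problem) is the standard one in the literature and is sound: the verification that the operators satisfy the braid relation is a routine descent case analysis; the bridge ``shortest in $C_k$ $\Rightarrow$ reduced in $S_k$'' correctly uses the fact that a non-reduced word can be brought by braid/commutation moves alone to one containing a consecutive $s_js_j$, and those moves lift verbatim to $C_k$ where $b_jb_j=b_j$ shortens the word; and part (2) follows from $\phi(\Delta_k)=w_0$ together with $w_0$ having no ascents on either side, via the left-handed twin of the action. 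One small slip in phrasing: for injectivity you should start from $\phi(b)=\phi(b')$ for possibly distinct $b,b'$, obtain two reduced expressions of the \emph{same} element of $S_k$, connect them by Matsumoto--Tits moves, and transport those moves back to $C_k$ to conclude $b=b'$; as written (``two shortest representations of a single $b$'') you only re-derive well-definedness. The repair is immediate and uses exactly the tools you already cite, so I regard the proposal as complete. What this approach buys is a self-contained proof from first principles; the only external inputs are the exchange condition and Tits' word theorem for $S_k$.
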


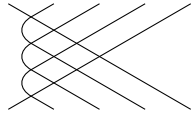
\begin{figure}[!h]\centering
\begin{tikzpicture}[xscale=0.6,yscale=0.7]
 \draw [rounded corners=10] (-1,0.5) -- (-5,2.5);
 \draw [rounded corners=10] (-2,0.5) -- (-5,2) -- (-4,2.5); 
 \draw [rounded corners=10] (-3,0.5) -- (-5,1.5) -- (-3,2.5);
 \draw [rounded corners=10] (-4,0.5) -- (-5,1) -- (-2,2.5);   
 \draw [rounded corners=10] (-5,0.5) -- (-1,2.5);  
\end{tikzpicture} 
\caption{The longest element~$\Delta_k$ of~$C_k$}\label{P:Delta}
\end{figure}

To a braided set it is classical to associate a certain semigroup, which captures its basic algebraic properties. For an idempotent braiding, we interpret it from the normalization perspective.
 
\begin{definition}\label{D:StrSemigroup}
 The \emph{structure semigroup} of a braided set $(X,\sigma)$ is given by the following presentation:
$$\SG(X,\sigma) = \langle \, X \; | \;\, xy = y'x' \text{ whenever } \sigma(x,y)=(y',x'), \, x,y \in X \,\rangle^+.$$
The \emph{structure monoid} $\Mon(X,\sigma)$ of $(X,\sigma)$ is the monoid given by the same presentation. 
The set of \emph{$\sigma$-normal} words is defined as
$$ \Norm(X,\sigma) = \{ \, x_{1}\ldots x_{k} \in X^* \;\, | \;\, \forall 1 \le j < k, \; \sigma (x_{j},x_{{j+1}}) = (x_{j},x_{{j+1}}) \,\}.$$
Notation $\Norm^{+}(X,\sigma)$ is used when the empty word is excluded. 
A representative $x_{1}\ldots x_{k}$ of an element of $\SG(X,\sigma)$, with $x_{j} \in X$, is called its \emph{normal form} if it is a $\sigma$-normal word.
\end{definition}

\begin{proposition}\label{PR:DeltaNormal}
 Take a set~$X$ with an idempotent braiding~$\sigma$. The action of the elements $\Delta_k \in C_k$ on $X^{\times k}$ via~$\sigma$ induces bijections~\footnote{We use somewhat abusive notations.}
\begin{align*}
\Delta_* \colon \SG(X,\sigma) &\overset{1:1}{\longrightarrow} \Norm^{+}(X,\sigma),\\
\Delta_* \colon \Mon(X,\sigma) &\overset{1:1}{\longrightarrow} \Norm(X,\sigma).
\end{align*}  
Further, any $\boldsymbol{w} \in \Mon(X,\sigma)$ has a unique normal form, given by $\Delta_*(\boldsymbol{w})$.
\end{proposition}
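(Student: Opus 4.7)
The plan is to show length by length that the action of $\Delta_k \in C_k$ on $X^{\times k}$ realizes a normalization operator, from which both bijections follow. Since $\sigma$ satisfies the YBE, the formula $b_i \mapsto \Id^{\times (i-1)} \times \sigma \times \Id^{\times (k-i-1)}$ defines an action of $B_k^+$ on $X^{\times k}$, and because $\sigma$ is idempotent it factors through the quotient $C_k$. Write $\Delta_*$ for the action of the longest element $\Delta_k$ from~\eqref{E:Delta} on the length-$k$ part of $X^*$.

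The crux is the absorption identity~\eqref{E:Absorb}: $b_i \Delta_k = \Delta_k = \Delta_k b_i$ in $C_k$ for every $i$. Reading the left identity via the action gives $b_i \cdot (\Delta_*(w)) = \Delta_*(w)$, i.e.\ the pair at positions $(i,i+1)$ of $\Delta_*(w)$ is $\sigma$-fixed, so $\Delta_*(w) \in \Norm(X,\sigma)$ for every $w$. Conversely, if $w$ is itself $\sigma$-normal then every generator $b_i$ fixes $w$; hence so does any product thereof, and in particular $\Delta_k$, giving $\Delta_*(w) = w$.

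Next I would verify that $\Delta_*$ descends to the structure monoid. The defining congruence of $\Mon(X,\sigma)$ is generated by the local substitutions $xy \leftrightarrow y'x'$ whenever $\sigma(x,y)=(y',x')$, applied inside an arbitrary context; at position $(j,j+1)$ of a length-$k$ word $w$ this is precisely the replacement $w \leftrightarrow b_j \cdot w$. The right-hand absorption $\Delta_k b_j = \Delta_k$ yields $\Delta_*(b_j \cdot w) = \Delta_*(w)$, and this handles both directions of the (symmetric) congruence. Restricting to nonempty words gives the analogous statement for $\SG(X,\sigma)$.

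It remains to prove bijectivity. Surjectivity onto $\Norm$ (respectively $\Norm^+$) is immediate from $\Delta_*|_{\Norm} = \Id$. For injectivity, fix any factorization $\Delta_k = b_{i_1}\cdots b_{i_N}$ in $C_k$: applying the corresponding sequence of defining relations shows $w \sim \Delta_*(w)$ in $\Mon(X,\sigma)$ for every $w$, so $\Delta_*(w) = \Delta_*(w')$ forces $w \sim \Delta_*(w) = \Delta_*(w') \sim w'$. Uniqueness of normal forms is then automatic: any $\sigma$-normal representative $w$ of $\boldsymbol{w}$ satisfies $w = \Delta_*(w) = \Delta_*(\boldsymbol{w})$. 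The main care needed in writing this out rigorously lies in the one routine but not-quite-trivial observation that the $\Mon$-congruence on length-$k$ words coincides with the symmetric-transitive closure of the ``apply a single $b_j$'' relation; once that is spelled out, everything reduces mechanically to Lemma~\ref{L:CoxMon}.
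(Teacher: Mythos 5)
Your proposal is correct and follows essentially the same route as the paper's proof: both rest on the absorption identities $b_i\Delta_k=\Delta_k=\Delta_k b_i$ from Lemma~\ref{L:CoxMon}, using the left one to show the image of $\Delta_*$ is $\sigma$-normal (and that $\Delta_*$ fixes normal words, since $\Delta_k$ is a product of $b_i$'s), and the right one to show $\Delta_*$ is constant on congruence classes, with injectivity coming from $w\sim\Delta_*(w)$. The one point you flag as needing care --- that the length-$k$ part of the defining congruence is generated by single applications of the $b_j$ --- is also used implicitly in the paper's argument, so there is no substantive difference.
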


In the idempotent case, one can thus freely switch between structure monoids and $\sigma$-normal words. In the context of rewriting systems, a similar result was obtained by Dehornoy and Guiraud \cite[Proposition 5.1.1]{DehGui}. Stated in their terms, our~$\Delta_*$ yields a quadratic normalization of type $(3,3)$.

\begin{proof}
Take an element $\boldsymbol{w}$ of $\SG(X,\sigma)$, written as $x_{1}\ldots x_{k}$ in the basis~$X$. Put $(x''_{k}, \ldots, x''_{1}) = \Delta_k (x_{1}, \ldots, x_{k})$. The relation $b_{k-j} \Delta_k = \Delta_k$ in~$C_n$ (Lemma~\ref{L:CoxMon}) implies $\sigma (x''_{{j+1}},x''_{{j}}) = (x''_{{j+1}},x''_{{j}})$. Hence the word $x''_{k}\ldots x''_{1}$ is $\sigma$-normal, and the image of~$\Delta_k$ lies in $\Norm^{+} (X,\sigma)$. Since $\Delta_k$ is a product of some $b_i$s, $x''_{k}\ldots x''_{1}$ also represents $\boldsymbol{w}$, and is thus its normal form. For the same reason, $\Delta_k$ restricts to the identity on $\Norm^{+} (X,\sigma)$. Further, the value of~$\Delta_k$ on $(x_{1},\ldots, x_{k})$ does not change when subsequent elements $(x_{j},x_{{j+1}})$ are replaced with $(x'_{{j+1}},x'_{{j}}) = \sigma (x_{j},x_{{j+1}})$: this follows from the relation $\Delta_k b_j = \Delta_k$ in~$C_n$ (again Lemma~\ref{L:CoxMon}). Summarizing, $\Delta_k$ associates to all words representing some $\boldsymbol{w} \in \SG(X,\sigma)$ the unique $\sigma$-normal word representing~$\boldsymbol{w}$.
\end{proof}

\begin{remark}
The proposition is to be compared with a similar result for an involutive~$\sigma$ and the associated $S_k$-actions. Namely, for a field~$\kk$ of characteristic~$0$, the monoid algebra $\kk \Mon(X,\sigma)$ and the graded space of invariants $\oplus_{k \ge 0} (\kk X^{\times k})^{S_k}$ are linearly isomorphic via the symmetrizers $\frac{1}{\# S_k}\sum_{s \in S_k} s$. The two results are covered by the following easy generalization. Take a braiding~$\sigma$ and linear combinations $P_k \in \kk B^+_k$ such that 
\begin{itemize}
 \item the coefficients of each~$P_k$ sum up to~$1$;
 \item for any $b \in B^+_k$, the actions of $b P_k$, $P_k b$, and $P_k$ on $\kk X^{\times k}$ via~$\sigma$ coincide.
\end{itemize} 
Then the actions of~$P_k$ induce a linear bijection between $\kk \Mon(X,\sigma)$ and $\oplus_{k \ge 0} (\kk X^{\times k})^{B^+_k}$.
\end{remark}

\begin{notation}\label{N:ast}
The associative product on~$\Norm^{+} (X,\sigma)$ corresponding under the bijection~$\Delta_*$ to the concatenation on~$\SG(X,\sigma)$ is denoted by~$\ast$. Explicitly, for $\boldsymbol{v},\boldsymbol{w} \in \Norm^{+} (X,\sigma)$ of length~$n$ and~$m$ respectively, one has $\boldsymbol{v} \ast \boldsymbol{w} = \Delta_{n+m}(\boldsymbol{vw})$. The same notation~$\ast$ is used for the analogous product on $\Norm (X,\sigma)$.
\end{notation}

\begin{definition}\label{D:BrMonoid}
A \emph{braided semigroup} is a semigroup $(M,\cdot)$ endowed with a braiding~$\sigma$, subject to the following compatibility conditions for all $u,v,w \in M$ (Figure~\ref{P:BrSemigroup}):
\begin{align}
\sigma (u \cdot v,w) &= (w'',u' \cdot v'), &&\text{where } \sigma(v,w)=(w',v'), \; \sigma(u,w')=(w'',u');\label{E:BrMonoid}\\
\sigma (u,v \cdot w) &= (v' \cdot w',u''), &&\text{where } \sigma(u,v)=(v',u'), \; \sigma(u',w)=(w',u'').\label{E:BrMonoid'}\\
\intertext{It is called a \emph{braided monoid} if the operation~$\cdot$ admits a unit~$1$ compatible with~$\sigma$: $\sigma(v,1)=(1,v)$, $\sigma(1,v) = (v,1)$. A braided semigroup or monoid is declared \emph{braided commutative} if one more compatibility condition is satisfied:}
w' \cdot v' &= v \cdot w, &&\text{where } \sigma(v,w)=(w',v').\label{E:BrMonoid''}
\end{align}
\end{definition}

\begin{figure}[!h]\centering
\begin{tikzpicture}[yscale=-.4,xscale=.4]
 \draw[rounded corners] (1,0) -- (2,1)-- (2,2.5);
 \draw[rounded corners] (0.5,1.5) -- (1,2) -- (1,2.5);
 \draw[rounded corners] (2,0) -- (0,2)-- (0,2.5);
 \node at (3.7,1.25) {$=$};
\end{tikzpicture}
\begin{tikzpicture}[yscale=-.4,xscale=.4]
 \draw[rounded corners] (0,-0.5) -- (0,0) -- (1.65,1.65) -- (2,2);
 \draw[rounded corners] (2,-0.5) -- (2,1)-- (1,2);
 \draw[rounded corners] (2,0)-- (0,2);
 \node at (6.5,1) {}; 
\end{tikzpicture}
\begin{tikzpicture}[yscale=-.4,xscale=.4]
 \draw[rounded corners] (0,0) -- (2,2)-- (2,2.5);
 \draw[rounded corners] (1.5,1.5)  -- (1,2) -- (1,2.5);
 \draw[rounded corners] (1,0) -- (0,1)-- (0,2.5);
 \node at (3,1.25) {$=$};
\end{tikzpicture}
\begin{tikzpicture}[yscale=-.4,xscale=.4]
 \draw[rounded corners] (0,0) --   (2,2);
 \draw[rounded corners] (0,-0.5) -- (0,1) --  (1,2);
 \draw[rounded corners] (1,-0.5) -- (1,1) -- (0,2);
 \node at (5.5,1) {}; 
\end{tikzpicture}
\begin{tikzpicture}[yscale=-.35,xscale=.35]
 \draw[rounded corners] (0,-1) -- (0,0);
 \draw[rounded corners] (0,0) -- (1,1) -- (-1,2);
 \draw[rounded corners] (0,0) -- (-1,1) -- (1,2);
 \node at (2,.5) {$=$}; 
\end{tikzpicture}
\begin{tikzpicture}[yscale=-.35,xscale=.35]
 \draw[rounded corners] (0,-1) -- (0,0);
 \draw[rounded corners] (0,0) -- (1,1) -- (1,2);
 \draw[rounded corners] (0,0) -- (-1,1) -- (-1,2);
\end{tikzpicture}
\caption{Axioms for a braided commutative semigroup}\label{P:BrSemigroup}
\end{figure}
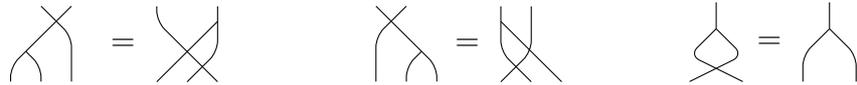

One recovers usual commutative semigroups taking as~$\sigma$ the flip $(u,v) \mapsto (v,u)$.

Everything is now ready for the central result of this section.

\begin{theorem}\label{T:IdempotentBraiding}
\begin{enumerate}
\item The structure semigroup $\SG(X,\sigma)$ of a braided set $(X,\sigma)$ is braided commutative, with the braiding~$\osigmas$ induced by the braiding~$\osigma$ on~$X^*$ (Figure~\ref{P:YBE}\rcircled{B}).
\item If~$\sigma$ is idempotent, then the semigroup of $\sigma$-normal words $(\Norm^{+}(X,\sigma),\ast)$ is braided commutative, with as braiding the restriction~$\osigman$ of~$\osigma$ to $\Norm^{+}(X,\sigma)$. Moreover, the braiding~$\osigman$ followed by concatenation recovers the product~$\ast$.
\item For an idempotent~$\sigma$, the braided semigroups from the previous points are isomorphic, via the map~$\Delta_*$ from Proposition~\ref{PR:DeltaNormal}.
\end{enumerate}
Analogous statements hold for the monoids $\Mon(X,\sigma)$ and $\Norm(X,\sigma)$.
\end{theorem}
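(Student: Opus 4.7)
The plan is to pass through the ambient braided monoid $(X^*,\cdot,\osigma)$. The braided-monoid axioms (YBE and the compatibilities (\ref{E:BrMonoid})--(\ref{E:BrMonoid'})) hold for $X^*$ because the block braiding $b_{m,n}$ of Figure \ref{P:YBE}\rcircled{B} is built recursively from $\sigma$, and each axiom becomes an identity in some $B_k^+$ that is verified diagrammatically. For Part~(1), $\osigma$ descends to a well-defined map $\osigmas$ on $\SG(X,\sigma)=X^*/\langle xy=y'x'\rangle$ because the defining relations of $\SG(X,\sigma)$ are a local instance of the YBE applied at the crossing involving the modified pair; braided commutativity (\ref{E:BrMonoid''}) is then tautological, since those same relations are exactly what $\osigma$ produces on length-one pairs.

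Part~(2) rests on the identity
\begin{equation*}
\Delta_{n+m} \;=\; b_{n,m}\cdot(\Delta_n\otimes\Delta_m)\qquad\text{in } C_{n+m},
\end{equation*}
where $\Delta_n$ and $\Delta_m$ act on the first $n$ and the last $m$ strands, respectively. Both sides admit a reduced expression of length $\binom{n+m}{2}$ and project to the longest element of $S_{n+m}$, hence agree by Lemma \ref{L:CoxMon}(1). Applied to a pair of $\sigma$-normal words $(\boldsymbol{v},\boldsymbol{w})$ with $|\boldsymbol{v}|=n$, $|\boldsymbol{w}|=m$, the right-hand side reduces to $b_{n,m}(\boldsymbol{v},\boldsymbol{w})=\osigma(\boldsymbol{v},\boldsymbol{w})$, since $\Delta_n$ and $\Delta_m$ fix $\sigma$-normal words (proof of Proposition \ref{PR:DeltaNormal}); the left-hand side yields the normal form $\Delta_{n+m}(\boldsymbol{v}\boldsymbol{w})=\boldsymbol{v}\ast\boldsymbol{w}$. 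Consequently, $\osigma(\boldsymbol{v},\boldsymbol{w})$ is a pair of words whose concatenation is $\sigma$-normal, forcing its components to be individually normal, and that concatenation coincides with $\boldsymbol{v}\ast\boldsymbol{w}$. This yields simultaneously the well-definedness of $\osigman$, the claim that braiding-then-concatenation equals $\ast$, and braided commutativity (\ref{E:BrMonoid''}) on $(\Norm^{+}(X,\sigma),\ast)$; the remaining braided-semigroup axioms are inherited by restriction from $(X^*,\cdot,\osigma)$.

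Part~(3) then follows: Proposition \ref{PR:DeltaNormal} supplies the set-bijection $\Delta_*$, Notation \ref{N:ast} makes it a semigroup isomorphism, and the computation above shows that it intertwines $\osigmas$ with $\osigman$ (apply $\osigma$ to normal-form representatives). For the monoid versions, adjoin $\emptyw$ as unit and use $b_{0,n}=b_{n,0}=\mathrm{id}$ to secure the unit-compatibilities $\osigma(\emptyw,\boldsymbol{v})=(\boldsymbol{v},\emptyw)$ and $\osigma(\boldsymbol{v},\emptyw)=(\emptyw,\boldsymbol{v})$. The principal obstacle is the identity for $\Delta_{n+m}$ above: everything else is bookkeeping with the YBE and the absorption relation (\ref{E:Absorb}), but that single identity in $C_{n+m}$ is the unique place where the idempotency of $\sigma$ essentially enters, through the longest-element structure of the Coxeter monoid rather than of $B^+_{n+m}$.
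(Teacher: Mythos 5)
Your proposal is correct and follows essentially the same route as the paper: Point~1 via naturality of $\osigma$ under the defining relations (repeated YBE), and Points~2--3 via the key decomposition $\Delta_{m+n}=b_{m,n}(\Delta_m\times\Delta_n)$ combined with Proposition~\ref{PR:DeltaNormal}. The only real divergence is that you verify this identity algebraically in $C_{n+m}$ through Lemma~\ref{L:CoxMon}(1) and a length count, whereas the paper obtains it graphically already in $B^+_{m+n}$ (so, contrary to your closing remark, the identity itself does not require idempotency --- that hypothesis is really consumed by the normal-form machinery of Proposition~\ref{PR:DeltaNormal}); also note that the compatibility axioms for $(\Norm^{+}(X,\sigma),\ast)$ are transported through the isomorphism $\Delta_*$ of your Part~3 rather than literally ``inherited by restriction,'' since $\ast$ is not the restriction of concatenation.
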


\begin{proof}
To show that~$\osigma$ descends to the quotient $\SG(X,\sigma)$ of~$X^*$, one should check its naturality with respect to~$\sigma$ applied at different positions of the arguments. This is done in Figure~\ref{P:Naturality} (where all the strands are labeled by~$X$): the bottom crossing is pulled through a multiple crossing by a sequence of R$\mathrm{III}$ moves---i.e., applications of the YBE. Axioms \eqref{E:BrMonoid}-\eqref{E:BrMonoid''} expressing the compatibility between the induced braiding~$\osigmas$ and the concatenation product are straightforward. Point~1 follows.
\begin{figure}[!h]\centering
\begin{tikzpicture}[xscale=0.4,yscale=0.3]
\draw [rounded corners, shift={(-0.5,0)}] (2,0)--(2,1)--(6,5)--(6,6);
\draw [rounded corners, shift={(-0.5,0)}] (3,0)--(3,1)--(7,5)--(7,6);
\draw [rounded corners, shift={(-0.5,0)}] (4,0)--(4,1)--(8,5)--(8,6);
\draw [rounded corners] (5,0)--(5,1)--(1,5)--(1,6);
\draw [rounded corners, myred, thick] (7,0)--(6,1)--(2,5)--(2,6);
\draw [rounded corners, myred, thick] (6,0)--(7,1)--(3,5)--(3,6);
\draw [rounded corners] (8,0)--(8,1)--(4,5)--(4,6);
\node at (12,3){$\Large =$};
\node at (15,0){ };
\end{tikzpicture} 
\begin{tikzpicture}[xscale=0.4,yscale=0.3]
\draw [rounded corners, shift={(-0.5,0)}] (2,0)--(2,1)--(6,5)--(6,6);
\draw [rounded corners, shift={(-0.5,0)}] (3,0)--(3,1)--(7,5)--(7,6);
\draw [rounded corners, shift={(-0.5,0)}] (4,0)--(4,1)--(8,5)--(8,6);
\draw [rounded corners] (5,0)--(5,1)--(1,5)--(1,6);
\draw [rounded corners, myred, thick] (6,0)--(6,1)--(2,5)--(3,6);
\draw [rounded corners, myred, thick] (7,0)--(7,1)--(3,5)--(2,6);
\draw [rounded corners] (8,0)--(8,1)--(4,5)--(4,6);
\end{tikzpicture} 
\caption{Naturality of~$\osigma$ with respect to a~$\sigma_i$}\label{P:Naturality}
\end{figure}
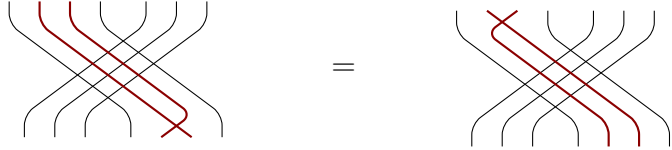

Now assume~$\sigma$ idempotent. An easy graphical argument using Figures~\ref{P:YBE}\rcircled{B} and~\ref{P:Delta} yields the following relations in~$B_{m+n}^+$ (and hence in~$C_{m+n}$):
\begin{align}\label{E:DeltaDecomposition}
&\Delta_{m+n} = (\Delta_n \times \Delta_m) b_{m,n} =b_{m,n} (\Delta_m \times \Delta_n).
\end{align}
This implies that the braiding~$\osigma$ restricts to $\Norm^{+}(X,\sigma)$, and that the bijection $\Delta_* \colon \SG(X,\sigma) \overset{1:1}{\longrightarrow} \Norm^{+}(X,\sigma)$ sends~$\osigmas$ to the restricted braiding~$\osigman$. Further, $\Delta_*$ sends the concatenation to the product~$\ast$ by the definition of the latter. This yields Point~3 and, as a consequence, the first part of Point~2.

It remains to show that, for $\sigma$-normal words $\boldsymbol{v} \in X^{\times m}$ and $\boldsymbol{w} \in X^{\times n}$ with $b_{m,n} (\boldsymbol{v},\boldsymbol{w}) = (\boldsymbol{w'},\boldsymbol{v'})$, the word~$\boldsymbol{w'v'}$ is the normal form of~$\boldsymbol{vw}$, which is $\Delta_{m+n}(\boldsymbol{vw})$ (Proposition~\ref{PR:DeltaNormal}). Relation~\eqref{E:DeltaDecomposition} and the $\sigma$-normality of $\boldsymbol{v}$ and~$\boldsymbol{w}$ yield
$$\Delta_{m+n}(\boldsymbol{vw}) =  b_{m,n} (\Delta_m (\boldsymbol{v}), \Delta_n (\boldsymbol{w})) = b_{m,n} (\boldsymbol{v},\boldsymbol{w}) = \boldsymbol{w'}\boldsymbol{v'}.\qedhere$$ 
\end{proof}

\begin{example}\label{EX:Bubble}
Take a totally ordered set~$X$ and the operator 
$$\sigma(x,y) = (\, \min \{x,y \} \, , \, \max \{x,y \} \,)$$
on~$X^{\times 2}$. It is an idempotent braiding. Indeed, when applied to a triple $(x,y,z)$, both sides of the YBE~\eqref{E:YBE} rearrange its elements in the increasing order. The Coxeter monoids~$C_k$ thus act on the powers~$X^{\times k}$. The structure monoid coincides with the symmetric monoid $S(X) = \langle \, X \, | \, xy = yx \,\rangle$ here, the $\sigma$-normal words are precisely the ordered words, and the map~$\Delta_*$ realizes the bubble sort algorithm. Theorem~\ref{T:IdempotentBraiding} then yields a braiding on~$S(X)$. It also explains how to simplify the bubble sort algorithm when some parts of the sequence to be sorted are already dealt with.
\end{example}

\begin{example}\label{EX:Lattice}
Generalizing the previous example, one can take a distributive lattice $(X,\wedge,\vee)$ and the operator 
$$\sigma(x,y) = (\, x \wedge y \, , \, x \vee y \,)$$
on~$X^{\times 2}$. The distributive lattice axioms force it to be an idempotent braiding. Here $\sigma$-normal words are precisely the ordered words, for the partial order induced by the lattice structure. Sets with the intersection and union operations, and integers with GCDs and LCMs, are two important examples. For integers, the map~$\Delta_*$ yields a recipe for computing the Smith normal form of a diagonal matrix over~$\ZZ$, provided that one knows how to do it for $2 \times 2$ matrices.
\end{example}

In the next example, as well as in the case of Young tableaux~\cite{LebedPlactic}, braided sets contain a ``dummy'' element (e.g., the unit or the empty row), which one needs to be able to get rid of. We now develop techniques for doing so. They are to be compared with the \emph{epinormalization} of \cite{DehGui}.

\begin{definition}
An idempotent braided set $(X,\sigma)$ is called \emph{pseudo-unital}, or \emph{PUIBS}, if it is endowed with a \emph{pseudo-unit}, i.e., an element $1 \in X$ satisfying:
\begin{enumerate}
\item both $\sigma(1,x)$ and $\sigma(x,1)$ lie in $\{\, (1,x),(x,1) \,\}$ for all $x\in X$;
\item a normal word with any occurrence of the letter~$1$ omitted remains normal.
\end{enumerate}
Given a word $\boldsymbol{w} \in X^*$, let the word $\boldsymbol{\ow}$ be obtained from it by erasing all its letters $1$. Denote by $\oNorm(X,\sigma,1)$ the set of normal words avoiding the letter $1$. Finally, let $\oMon(X,\sigma,1)$ be the monoid $\Mon(X,\sigma)$ with the letter $1$ identified with the empty word. It will be called the \emph{reduced structure monoid} of $(X,\sigma,1)$.
\end{definition}

Condition~1 implies in particular $\sigma(1,1) = (1,1)$.

Condition~2 means that the map $\boldsymbol{w} \mapsto \boldsymbol{\ow}$ yields a surjection $\Norm(X,\sigma) \twoheadrightarrow \oNorm(X,\sigma,1)$, with a tautological section. We will next show that, at the level of structure monoid, this corresponds to the quotient $\Mon(X,\sigma) \twoheadrightarrow \oMon(X,\sigma,1)$.

\begin{proposition}\label{PR:PUIBS}
Let $(X,\sigma,1)$ be a PUIBS. Then the following commutative diagram can be completed in a unique way:
$$\xymatrix @!0 @R=1.2cm @C=2cm{
    \Mon(X,\sigma) \ar[rr]^-{\Delta_*}_-{1:1} \ar@{->>}[d] && \Norm(X,\sigma) \ar@{->>}[d] & \boldsymbol{w} \ar@{|->}[d] \\
    \oMon(X,\sigma,1) \ar@{.>}[rr]^-{\exists ! \, \oDelta} && \oNorm(X,\sigma,1) & \boldsymbol{\ow}
}$$
Moreover, the induced map $\oDelta$ is necessarily a bijection.
\end{proposition}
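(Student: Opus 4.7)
The plan is to define $\oDelta$ by requiring the diagram to commute, i.e., by setting $\oDelta([\boldsymbol{w}]) := \overline{\Delta_*(\boldsymbol{w})}$ for any $\boldsymbol{w} \in \Mon(X,\sigma)$ representing a class in $\oMon(X,\sigma,1)$. Uniqueness is then automatic since the left vertical arrow is surjective, and the bulk of the work is to verify that this formula is well-defined. The relations defining $\oMon$ split into two types: the structure-monoid relations inherited from $\Mon$, for which the invariance of $\overline{\Delta_*(\boldsymbol{w})}$ is immediate from $\Delta_*$ being well-defined on $\Mon$; and the insertion or deletion of a single letter~$1$. The problem thus reduces to the following key claim: for all $\boldsymbol{u}, \boldsymbol{v} \in X^*$ of lengths $p$ and $q$, the normal form $\Delta_{p+q+1}(\boldsymbol{u} \cdot 1 \cdot \boldsymbol{v})$ is obtained from $\Delta_{p+q}(\boldsymbol{u}\boldsymbol{v})$ by inserting a single letter~$1$ at some position.

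To establish this key claim, I would exploit the decomposition~\eqref{E:DeltaDecomposition}, writing $\Delta_{p+q+1} = b_{p,\, q+1}(\Delta_p \times \Delta_{q+1})$ and analysing the two factors in turn. First, for $\Delta_{q+1}(1 \cdot \boldsymbol{v}) = b_{1,q}(1, \Delta_q(\boldsymbol{v}))$, one tracks the $1$-strand through the successive crossings composing $b_{1,q}$: pseudo-unit condition~(1) guarantees that every crossing involving the $1$ either swaps the $1$ past its neighbour or leaves the pair fixed, with the non-$1$ letter untouched in both cases. Once the $1$ ceases to move rightward, all subsequent crossings act on adjacent pairs already lying inside the normal word $\Delta_q(\boldsymbol{v})$ and therefore act trivially. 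Hence $\Delta_{q+1}(1 \cdot \boldsymbol{v})$ is simply $\Delta_q(\boldsymbol{v})$ with a $1$ inserted at some position---necessarily consistent with normality, by condition~(2) together with Proposition~\ref{PR:DeltaNormal}. An entirely parallel tracking argument for $b_{p,\, q+1}$ acting on $(\Delta_p(\boldsymbol{u}), \Delta_{q+1}(1 \cdot \boldsymbol{v}))$ shows that the crossings touching the $1$-strand never alter non-$1$ letters, so that the remaining crossings reproduce exactly $b_{p,q}$ applied to $(\Delta_p(\boldsymbol{u}), \Delta_q(\boldsymbol{v}))$. Invoking~\eqref{E:DeltaDecomposition} once more, the outcome is $\Delta_{p+q}(\boldsymbol{u}\boldsymbol{v})$ with a single $1$ inserted; erasing $1$s on both sides yields the desired invariance.

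Once $\oDelta$ is well-defined, bijectivity is almost immediate. For surjectivity, any $\boldsymbol{n} \in \oNorm(X,\sigma,1)$ is already a normal word in $\Norm(X,\sigma)$, so $\oDelta$ sends the $\oMon$-class of $\Delta_*^{-1}(\boldsymbol{n})$ to $\overline{\boldsymbol{n}} = \boldsymbol{n}$. For injectivity, in $\oMon$ one has $[\boldsymbol{w}]_{\oMon} = [\Delta_*(\boldsymbol{w})]_{\oMon} = [\overline{\Delta_*(\boldsymbol{w})}]_{\oMon}$, since $\Delta_*(\boldsymbol{w})$ equals $\boldsymbol{w}$ in $\Mon$ and the relation $1 = \varepsilon$ is imposed in passing to $\oMon$; hence two classes with the same image under $\oDelta$ must coincide. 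The principal obstacle is the strand-tracking computation in the second paragraph: the underlying picture is intuitive but requires careful book-keeping that simultaneously uses both pseudo-unit axioms and the absorption property~\eqref{E:Absorb}. The case where $\boldsymbol{v}$ itself contains $1$s poses no extra difficulty, since only the newly inserted $1$-strand needs to be traced, which is precisely why it suffices to handle the insertion of a single letter at a time.
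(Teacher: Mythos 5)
Your overall strategy --- reduce to inserting a single letter~$1$ and show that normalization just carries that $1$ along --- is the same as the paper's, and your treatment of uniqueness, surjectivity and injectivity matches the paper's almost verbatim. The first half of your key claim is also fine: tracking the $1$ through $b_{1,q}(1,\Delta_q(\boldsymbol{v}))$ works exactly as you describe. The gap is in the second half. The assertion that ``the remaining crossings reproduce exactly $b_{p,q}$ applied to $(\Delta_p(\boldsymbol{u}),\Delta_q(\boldsymbol{v}))$'' is false in general, because condition~(1) of a pseudo-unit allows $\sigma$ to \emph{fix} a pair involving~$1$ instead of swapping it. When that happens the $1$-strand gets stuck at its position, and the later crossings of the braid word $b_{p,q+1}$, which the diagram intends to apply to two non-$1$ colors, instead hit the stuck~$1$; the corresponding crossings of $b_{p,q}$ are then never performed on the reduced word. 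Minimal instance: $p=q=1$ with $\sigma(u,1)=(u,1)$ and $\sigma(1,v)=(1,v)$ (this occurs, e.g., in Example~\ref{EX:Factorization} for $u\in H$, $v\in K$). Then $b_{1,2}(u,1,v)=(u,1,v)$, every crossing touches the $1$-strand, and the single crossing $\sigma(u,v)$ of $b_{1,1}$ is never applied --- so ``removing the $1$-strand from the diagram'' does not commute with the $\sigma$-action, which is precisely the subtlety your picture glosses over.

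The conclusion is nevertheless correct, and the repair is short: each crossing of the full computation either involves the~$1$ (and then, by condition~(1), leaves the reduced word unchanged) or does not (and then acts as a generator of $C_{p+q}$ on the reduced word). Hence the reduced word evolves by \emph{some} sequence of generator applications, so it stays in the same class of $\Mon(X,\sigma)$; the final full word is normal, so by condition~(2) its reduction is normal; and uniqueness of normal forms (Proposition~\ref{PR:DeltaNormal}) then identifies that reduction with $\Delta_{p+q}(\boldsymbol{u}\boldsymbol{v})$. This is exactly how the paper argues, for an arbitrary decomposition of $\Delta_{k+1}$ into generators --- the decomposition~\eqref{E:DeltaDecomposition} is not needed and does not by itself save you from the stuck-strand phenomenon. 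You should replace the ``reproduce exactly $b_{p,q}$'' step by this normal-form argument (whose ingredients you already cite, but never assemble at the point where they are needed).
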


\begin{proof}
First we show that the bijection~$\Delta_*$ followed by the map $\boldsymbol{w} \mapsto \boldsymbol{\ow}$ induces a map $\oDelta \colon \oMon(X,\sigma,1) \to \oNorm(X,\sigma,1)$. For this, let us check that for a word $\boldsymbol{w'}$ obtained from $\boldsymbol{w}$ by inserting a letter~$1$ at position~$p$, its normal form $\Delta_{k+1}(\boldsymbol{w'})$ differs from $\Delta_k(\boldsymbol{w})$ by one letter~$1$ as well. Write $\Delta_{k+1}(\boldsymbol{w'})$ as $b_{i_t}\cdots b_{i_1}(\boldsymbol{w'})$. Recall that the generators $b_i$ of the Coxeter monoid~$C_{k+1}$ act on $X^{\times (k+1)}$ as the braiding~$\sigma$ applied to the components $i$ and $i+1$. Put $\boldsymbol{w'}_{(j)} = b_{i_j}\cdots b_{i_1}(\boldsymbol{w'})$, $\boldsymbol{w'}_{(0)} = \boldsymbol{w'}$. The position sequence $u(j)$ will describe how the inserted letter~$1$ moves in the word sequence $\boldsymbol{w'}_{(j)}$. Concretely, define $u$ inductively by $u(0)=p$, and $u(j)= u(j-1)$ except when 
\begin{itemize}
\item $i_j = u(j-1) - 1$ and $\boldsymbol{w'}_{(j)} \neq \boldsymbol{w'}_{(j-1)}$, in which case put $u(j)=u(j-1) - 1$;
\item $i_j = u(j-1)$ and $\boldsymbol{w'}_{(j)} \neq \boldsymbol{w'}_{(j-1)}$, in which case put $u(j)=u(j-1) + 1$.
\end{itemize}
Finally, let $\boldsymbol{w}_{(j)}$ be obtained from $\boldsymbol{w'}_{(j)}$ by deleting the letter~$1$ at position $u(j)$. The definition of pseudo-unit implies that $\boldsymbol{w}_{(j)}$ differs from $\boldsymbol{w}_{(j-1)}$ by at most one application of a generator of~$C_k$, and that $\boldsymbol{w}_{(t)}$ is normal since $\boldsymbol{w'}_{(t)} = \Delta_{k+1}(\boldsymbol{w'})$ is so. Hence $\boldsymbol{w}_{(t)}$ is the normal form of $\boldsymbol{w}_{(0)} = \boldsymbol{w}$, and we are done.

The uniqueness of~$\oDelta$ is obvious. Its surjectivity follows from that of the maps $\Delta_* \colon \Mon(X,\sigma) \overset{1:1}{\longrightarrow} \Norm(X,\sigma)$ and $\Norm(X,\sigma) \twoheadrightarrow \oNorm(X,\sigma,1)$. As for injectivity, observe that a word $\boldsymbol{w} \in X^{\times k}$ and its reduced normalization $\overline{\Delta_k(\boldsymbol{w})}$ represent the same element of $\oMon(X,\sigma,1)$.
\end{proof}

\begin{notation}\label{N:PUIBSast}
Let~$\ast$ be the associative product on $\oNorm(X,\sigma,1)$ corresponding to the concatenation on $\oMon(X,\sigma,1)$ via the induced bijection~$\oDelta$. Its unit is the empty word, denoted by~$\emptyw$.
\end{notation}

\begin{example}\label{EX:Factorization}
Consider a monoid factorization $G=HK$. That is, $H$ and~$K$ are submonoids of~$G$, and any $g\in G$ uniquely decomposes as $g=hk$ with $h \in H, k \in K$. Put $X = H \cup K$, and, for $x,y \in X$, set $$\sigma(x,y) = (y',x'), \text{ where } y' \in H, x' \in K,\, y'x'=xy.$$ 
This is an idempotent braiding: applied to a triple $(x,y,z)$, both sides of the YBE yield $(z',1,x')$, where $z' \in H$ and $x' \in K$ form the unique $HK$-decomposition of~$xyz$. The normal form of a word $x_{1}\ldots x_{p}$, $p \ge 2$, is $h 1\ldots 1k$, where $hk$ is the $HK$-decomposition of the total product $x_{1}\cdots x_{p}$, and $p-2$ letters~$1$ are inserted in the middle. This explicit form makes it obvious that the unit $1$ of~$G$ is a pseudo-unit for $(X,\sigma)$, and that the reduced monoid $\oMon(X,\sigma,1)$ recovers~$G$. In this particular case the braiding~$\osigmas$ survives in this quotient, turning $G$ into a braided commutative semigroup. It is not a braided monoid: one has $\sigma(x,1) = (x,1) \neq (1,x)$ for $x \in H \setminus \{1\}$. The map~$\oDelta$ yields here a factorizing procedure for a multi-term product. For the trivial factorization $G=\{1\} \, G$, one recovers the braiding $\sigma(g,g') = (1,gg')$ on~$G$, which encodes the associativity, as explained in~\cite{Lebed1}. This braiding yields a $C_k$-action on~$G^{\times k}$.
\end{example} 

\begin{example}\label{EX:Size2}
To show the diversity of idempotent braidings, we give their complete classification on a two-element set $X=\{0,1\}$. Up to isomorphism, they are $16$. Each braiding is written in a way which suggests how to generalize it to larger sets.
\begin{enumerate}
\item $\sigma = \Id_{X \times X}$;
\item $\sigma (x,y) = (0,0)$;
\item $\sigma(x,y) = (x,f(x))$, \qquad $\sigma(x,y) = (f(y),y)$, 

 where $F \colon X \to X$ is one of the $3$ maps $x \mapsto x$, $x \mapsto x+1$, $x \mapsto 0$;
\item $\sigma(x,y) = (x \diamond y,0)$, \qquad $\sigma(x,y) = (0, x \diamond y)$,

 where~$\diamond$ is one of the $2$ operations $+$ and $\max$;
\item $\sigma(x,y) = (\, \min \{x,y \} \, , \, y \,)$, \qquad $\sigma(x,y) = (\, x \, , \, \min \{x,y \} \,)$;
\item $\sigma(x,y) = (\, \max \{x,y \} \, , \, \max \{x,y \} \,)$;
\item $\sigma(x,y) = (\, \min \{x,y \} \, , \, \max \{x,y \} \,)$.
\end{enumerate}
\end{example} 


\section{Basics of braided (co)homology}\label{S:BrHom}

We now recall the braided (co)homology constructions from~\cite{HomologyYB,Lebed1}. The following objects play the role of coefficients in these theories.

\begin{definition}
A \emph{right module} over a braided set $(X, \sigma)$ is a pair $(M, \rho)$, where $M$ is a set and $\rho \colon M \times X \to M$, $(m,x) \mapsto m \cdot x$, is a map compatible with~$\sigma$:
$$(m \cdot x) \cdot y = (m \cdot  y') \cdot x' \qquad\qquad \text{ for all } m \in M,\, x,y \in X, \, (y',x') = \sigma(x,y).$$ 
\emph{Left modules} $(M, \,\lambda \colon X \times M \to M)$ are defined similarly. A \emph{bimodule} $(M,\rho,\lambda)$ over $(X, \sigma)$ combines commuting right and left module structures, in the sense of $(x \cdot m) \cdot y = x \cdot (m \cdot y)$ (Figure~\ref{P:BrMod}). A (bi)module~$M$ is called \emph{linear} if it is an abelian group and the maps $m \mapsto m \cdot x$ and/or $m \mapsto x \cdot m$ are linear for all $x \in X$. Such a bimodule is called a \emph{bimodule-algebra} if it is endowed with a bilinear associative product~$\mu$ 
 satisfying
\begin{align*}
\mu(x \cdot m_1,m_2) &= x \cdot \mu(m_1,m_2), \\
\mu(m_1,m_2 \cdot x) &= \mu(m_1,m_2) \cdot x, \\
\mu(m_1 \cdot x,m_2) &= \mu(m_1,x \cdot m_2).
\end{align*}
(Bi)modules over a PUIBS $(X, \sigma, 1)$ are required to satisfy $ m \cdot 1 = 1 \cdot m = m$.
\end{definition}

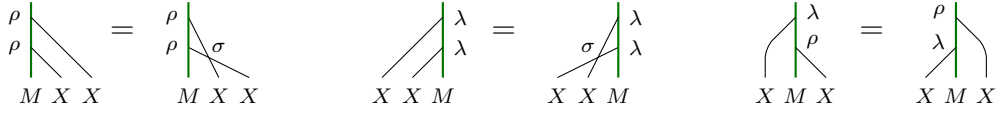
\begin{figure}[!h]\centering
\begin{tikzpicture}[xscale=0.4,yscale=0.4]
 \draw [thick, mygreen] (0,0) -- (0,2.5);
 \draw (1,0) -- (0,1);
 \draw (2,0) -- (0,2);
 \node at (0,2) [left] {$\scriptstyle \rho$};
 \node at (0,1) [left] {$\scriptstyle \rho$};
 \node at (2,0) [below] {$\scriptstyle X$};
 \node at (1,0) [below] {$\scriptstyle X$};
 \node at (0,0) [below] {$\scriptstyle M$};
 \node  at (3,1.5){$=$};
\end{tikzpicture}
\begin{tikzpicture}[xscale=0.4,yscale=0.4]
 \node  at (-0.5,1.5){};
 \draw (1,0) -- (0,2);
\draw (2,0) -- (0,1);
 \draw [thick, mygreen] (0,0) -- (0,2.5);
 \node at (0,1) [left] {$\scriptstyle \rho$};
 \node at (0,2) [left] {$\scriptstyle \rho$};
 \node at (2,0) [below] {$\scriptstyle X$};
 \node at (1,0) [below] {$\scriptstyle X$};
 \node at (0,0) [below] {$\scriptstyle M$};
 \node at (1,1) {$\scriptstyle \sigma$}; 
\end{tikzpicture}
\begin{tikzpicture}[xscale=0.4,yscale=0.4]
 \node  at (-2,1.5){};
 \draw [thick, mygreen] (3,0) -- (3,2.5);
 \draw (1,0) -- (3,2);
 \draw (2,0) -- (3,1);
 \node at (3,2) [right] {$\scriptstyle \lambda$};
 \node at (3,1) [right] {$\scriptstyle \lambda$};
 \node at (2,0) [below] {$\scriptstyle X$};
 \node at (1,0) [below] {$\scriptstyle X$};
 \node at (3,0) [below] {$\scriptstyle M$};
 \node  at (5,1.5){$=$};
\end{tikzpicture}
\begin{tikzpicture}[xscale=0.4,yscale=0.4]
 \draw (1,0) -- (3,1); 
 \draw [thick, mygreen] (3,0) -- (3,2.5);
 \draw (2,0) -- (3,2);
 \node at (3,2) [right] {$\scriptstyle \lambda$};
 \node at (3,1) [right] {$\scriptstyle \lambda$};
 \node at (2,0) [below] {$\scriptstyle X$};
 \node at (1,0) [below] {$\scriptstyle X$};
 \node at (3,0) [below] {$\scriptstyle M$};
 \node at (2,1) {$\scriptstyle \sigma$}; 
\end{tikzpicture}
\begin{tikzpicture}[xscale=0.4,yscale=0.4]
 \node  at (-1,1.5){};
 \draw [thick, mygreen] (3,0) -- (3,2.5);
 \draw [rounded corners] (2,0) -- (2,1) -- (3,2);
 \draw (4,0) -- (3,1);
 \node at (3,2.2) [right] {$\scriptstyle \lambda$};
 \node at (3,1.2) [right] {$\scriptstyle \rho$};
 \node at (2,0) [below] {$\scriptstyle X$};
 \node at (4,0) [below] {$\scriptstyle X$};
 \node at (3,0) [below] {$\scriptstyle M$};
 \node  at (5.5,1.5){$=$};
\end{tikzpicture}
\begin{tikzpicture}[xscale=0.4,yscale=0.4]
 \draw [thick, mygreen] (3,0) -- (3,2.5);
 \draw [rounded corners] (4,0) -- (4,1) -- (3,2);
 \draw (2,0) -- (3,1);
 \node at (3,1.2) [left] {$\scriptstyle \lambda$};
 \node at (3,2.2) [left] {$\scriptstyle \rho$};
 \node at (2,0) [below] {$\scriptstyle X$};
 \node at (4,0) [below] {$\scriptstyle X$};
 \node at (3,0) [below] {$\scriptstyle M$};
\end{tikzpicture}
\caption{A bimodule $(M, \rho, \lambda)$ over $(X, \sigma)$.}\label{P:BrMod}
\end{figure}

\begin{remark}
(Bi)modules over $(X, \sigma)$ can be regarded as (bi)modules over the structure semigroup $\SG(X,\sigma)$ or the structure monoid $\Mon(X,\sigma)$. Similarly, (bi)modules over a PUIBS $(X, \sigma, 1)$ correspond to (bi)modules over $\oSG(X,\sigma,1)$.
\end{remark}

\begin{example}\label{EX:Trivial}
Any set~$M$ equipped with the projections $M \times X \to M$, $m \cdot x = m$, and $X \times M \to M$, $x \cdot m = m$, is an $(X,\sigma)$-bimodule. This structure is called \emph{trivial}.
\end{example}

\begin{example}\label{EX:Adjoint}
The braided set $(X,\sigma)$ is a right and a left module over itself, with the actions $\rho \colon (x,y) \mapsto x'$ and $\lambda \colon (x,y) \mapsto y'$, where $(y',x') = \sigma(x,y)$. These actions do not always combine into a bimodule structure. More generally, the powers $X^{\times k}$ are right and left module over~$X^*$, with the module structure adjoint to the extension~$\osigma$ of the braiding~$\sigma$ to~$X^*$ (Figure~\ref{P:YBE}\rcircled{B}). The linearized sets $\ZZ X^{\times k}$ receive induced linear $(X^*,\osigma)$-module structures. All these modules are baptized \emph{adjoint}. 
\end{example}

\begin{example}\label{EX:StructureSGAsBimodAlg}
The structure semigroup $\SG(X,\sigma)$ is an $(X,\sigma)$-bimodule, with the concatenation actions. Its linearization $\ZZ \SG(X,\sigma)$ becomes a bimodule-algebra. The same is true about the structure monoid $\Mon(X,\sigma)$.
\end{example}

As usual, we let the positive braid monoid~$B^+_k$ from~\eqref{E:Bn} act on $X^{\times k}$ via~$\sigma$.

\begin{theorem}[\cite{HomologyYB,Lebed1}]\label{T:BrHom}
Take a braided set $(X,\sigma)$.
\begin{enumerate}
\item Let $(M,\rho)$ and $(N,\lambda)$ be linear right and left modules over $(X,\sigma)$ respectively. Consider the abelian groups $C_k = M \otimes_{\ZZ} \ZZ X^{\times k} \otimes_{\ZZ} N$, $k \ge 0$, and the linear maps 
\begin{align*}
d_k &= \sum\nolimits_{i=1}^{k}(-1)^{i-1}(d_{k;i}^{l} - d_{k;i}^{r}) \colon C_k \to C_{k-1}, \quad k > 0,\\
\text{where }\quad d_{k;i}^{l} &(m,x_1,\ldots,x_k,n) = (m \cdot x'_i,x'_1,\ldots,x'_{i-1},x_{i+1},\ldots,x_k,n),\\
&\qquad x'_i x'_1 \ldots x'_{i-1} = b_1 \cdots b_{i-1}(x_1 \ldots x_i),\\
 d_{k;i}^{r}&(m,x_1,\ldots,x_k,n) = (m,x_1,\ldots,x_{i-1},x''_{i+1},\ldots,x''_k,x''_i \cdot n),\\
&\qquad x''_{i+1} \ldots x''_k x''_i = b_{k-i} \cdots b_1(x_i \ldots x_k)
\end{align*}
(Figure~\ref{P:BrHom}), completed by $d_0 = 0$. They form a chain complex.
\item If $(M,\rho,\lambda)$ is a linear $(X,\sigma)$-bimodule, then a similar differential can be defined on $C_k = M \otimes_{\ZZ} \ZZ X^{\times k}$: one simply replaces $d_{k;i}^{l}$ and~$d_{k;i}^{r}$ with
\begin{align*}
d_{k;i}^{l} &(m,x_1,\ldots,x_k) = (m \cdot x'_i,x'_1,\ldots,x'_{i-1},x_{i+1},\ldots,x_k),\\
d_{k;i}^{r} &(m,x_1,\ldots,x_k) = (x''_i \cdot m,x_1,\ldots,x_{i-1},x''_{i+1},\ldots,x''_k).
\end{align*}
\item Let $(M,\rho,\lambda)$ be a linear $(X,\sigma)$-bimodule. Consider the abelian groups $C^k = \Map (X^{\times k},M)$, $k \ge 0$, with $\Map (X^{\times 0},M)$ interpreted as~$M$, and the linear maps 
\begin{align*}
d^{k} &= \sum\nolimits_{i=1}^{k}(-1)^{i-1}(d^{k;i}_{l} - d^{k;i}_{r}) \colon C^{k-1} \to C^{k}, \quad k > 0,\\
\text{where }\quad (d^{k;i}_{l}f) &(x_1,\ldots,x_k) = x'_i \cdot f(x'_1,\ldots,x'_{i-1},x_{i+1},\ldots,x_k),\\
(d^{k;i}_{r}f) &(x_1,\ldots,x_k) = f(x_1,\ldots,x_{i-1},x''_{i+1},\ldots,x''_k) \cdot x''_i,
\end{align*}
with notations from Point~1. They form a cochain complex.
\end{enumerate}
\end{theorem}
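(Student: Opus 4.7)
The plan is to verify $d_{k-1}\circ d_k = 0$ (for Point~1) by a diagrammatic expansion in the calculus of Section~\ref{S:IdempotentBraiding}. The face-like maps $d^l_{k;i}$ and $d^r_{k;i}$ are most transparent graphically: $d^l_{k;i}$ drags the $i$-th strand to the leftmost position via the braid $b_1\cdots b_{i-1} \in B_k^+$ and then absorbs it into $M$ along $\rho$, while $d^r_{k;i}$ drags the $i$-th strand symmetrically to the rightmost position and absorbs it into $N$ along $\lambda$. This is exactly the content of Figure~\ref{P:BrHom}.

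Expanding $d_{k-1}\circ d_k$ produces a signed sum of compositions which splits into four families, of types $(ll)$, $(lr)$, $(rl)$, $(rr)$. For each family I would establish a commutation identity expressing a composition with indices $(i,j)$ as a composition with reindexed arguments and suitable sign, in analogy with the classical simplicial identity $d_i d_j = d_{j-1}d_i$ for $i<j$. When read diagrammatically, each such identity asserts the equality of two braided diagrams obtained from one another by repeated Reidemeister~III moves, i.e., applications of the YBE, together with the far-commutativity $b_ib_j = b_jb_i$ for $|i-j|>1$; hence the underlying braid words agree in $B_k^+$ and the associated maps on $X^{\times k}$ coincide. For the mixed $(lr)$ and $(rl)$ families, after the braidings have been performed the left and right absorptions act on disjoint tensor factors, so the commutation is automatic. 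With these identities in hand, the alternating signs in $d_k$ conspire to cancel all terms pairwise and deliver $d^2 = 0$.

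Point~2 is handled by the same bookkeeping, with one extra input: in Point~1 the right and left actions act on different tensor factors $M$ and $N$, whereas in Point~2 they act on the same bimodule $M$, so the cancellation of mixed $(lr)$ and $(rl)$ terms additionally uses the compatibility $(x\cdot m)\cdot y = x\cdot (m\cdot y)$ depicted in Figure~\ref{P:BrMod}. Point~3 is then obtained by dualization: the cochain maps $d^{k;i}_l$ and $d^{k;i}_r$ are the formal transposes, with respect to the evaluation pairing between $\Map(X^{\times k},M)$ and $M\otimes_{\ZZ} \ZZ X^{\times k}$ used in Point~2, of the chain-level operators $d^l_{k;i}$ and $d^r_{k;i}$; hence $d^{k+1}\circ d^k = 0$ follows formally from the chain-level identity.

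The main obstacle is combinatorial rather than conceptual: each application of a face map reshuffles the remaining strands by a nontrivial braid, so the index of a given strand shifts between successive steps, and verifying that signs match up after these shifts requires care. The graphical viewpoint mitigates this by reducing every commutation identity to a visible manipulation of a braid diagram, where a Reidemeister~III move can be applied at the relevant crossing and the strand labels tracked unambiguously.
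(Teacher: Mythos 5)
Your overall architecture --- expanding $d_{k-1}d_k$ into the four families $(ll)$, $(lr)$, $(rl)$, $(rr)$, proving pre-cubical commutation identities $d^{\epsilon}_{k-1;i}d^{\zeta}_{k;j}=d^{\zeta}_{k-1;j-1}d^{\epsilon}_{k;i}$ for $i<j$, and letting the alternating signs cancel everything --- is the right one, and it matches the ``easy diagram manipulations'' route the paper alludes to (cf.\ Remark~\ref{R:Cub}). But there is a genuine gap in how you justify the identities for the $(ll)$ and $(rr)$ families. You claim that in every family the two sides are related by Reidemeister~III moves and far-commutativity, ``hence the underlying braid words agree in $B_k^+$''. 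That is true only for the mixed families in Point~1. For the $(ll)$ family the two compositions are \emph{not} equal as braid diagrams: in $d^l_{k-1;i}d^l_{k;j}$ strand $j$ crosses strand $i$ on its way to $M$ and is absorbed before strand $i$, while in $d^l_{k-1;j-1}d^l_{k;i}$ the two strands never cross and $i$ is absorbed first. Already for $k=2$ the relevant difference is
\begin{equation*}
\bigl((m\cdot x_1)\cdot x_2,\,n\bigr)-\bigl((m\cdot x_2')\cdot x_1',\,n\bigr),\qquad (x_2',x_1')=\sigma(x_1,x_2),
\end{equation*}
which vanishes only by the defining axiom of a right $(X,\sigma)$-module, not by any relation in $B_k^+$. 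Symmetrically the $(rr)$ family needs the left-module axiom. These axioms (the first two identities of Figure~\ref{P:BrMod}) are the essential algebraic input and must be invoked already in Point~1; your proposal only brings in module compatibility for the mixed terms of Point~2, which is the one place in Point~1 where nothing beyond the YBE is needed. For comparison, the paper's own sketch in Section~\ref{S:BrHomCup} makes exactly this split: $d_{k-1}d_k=0$ follows from the coassociativity of the signed shuffle coproduct $\Csh$ (which packages all your braid rearrangements and Koszul signs at once) \emph{together with} the relations $\rho\,(\rho\times\Id_X)(\Id_M\times\Csh^{1,1})=0$ and $\lambda\,(\Id_X\times\lambda)(\Csh^{1,1}\times\Id_N)=0$ --- precisely the module axioms your argument omits.

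A secondary point: Point~3 is not the literal transpose of Point~2 under the evaluation pairing, since the $l$-faces of the cochain differential act on the value of $f$ by the \emph{left} action $\lambda$, whereas the $l$-faces of the chain differential in Point~2 act on $m$ by the \emph{right} action $\rho$. Either make the duality precise (e.g.\ as $\Hom$ over the structure monoid applied to a bimodule resolution, as in the Hochschild setting) or simply rerun the same direct computation, which again uses both module axioms and, for the mixed terms, the bimodule compatibility $(x\cdot m)\cdot y=x\cdot(m\cdot y)$.
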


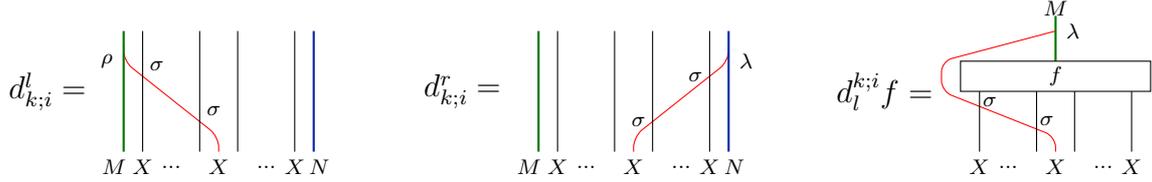
\begin{figure}[!h]\centering
\begin{tikzpicture}[xscale=0.25,yscale=0.2]
 \draw [red,rounded corners] (-1,0) -- (-1,-1) -- (4,-6) -- (4,-7);
 \draw [thick, mygreen] (-1,1) --  (-1,-7);
 \draw [thick, myblue] (9,1) --  (9,-7); 
 \draw (0,1) --  (0,-7);
 \draw (3,1) -- (3,-7);
 \draw (5,1) -- (5,-7);
 \draw (8,1) -- (8,-7);
 \node at (-5,-3) {$d_{k;i}^{l}=$};
 \node at (-1,-1)  [left] {$\scriptstyle \rho$};
 \node at (-0.2,-2.3) [above right] {$\scriptstyle \sigma$};
 \node at (2.8,-5.3) [above right] {$\scriptstyle \sigma$};
 \node at (0,-8) {$\scriptstyle X$};
 \node at (1.5,-8) {$\scriptstyle \ldots$};
 \node at (4,-8) {$\scriptstyle X$};
 \node at (6.5,-8) {$\scriptstyle\ldots$};
 \node at (8,-8) {$\scriptstyle X$};
 \node at (-1.5,-8) {$\scriptstyle M$};
 \node at (9.3,-8) {$\scriptstyle N$};   
\end{tikzpicture} \qquad
\begin{tikzpicture}[xscale=0.25,yscale=0.2]
 \draw [red,rounded corners] (9,0) -- (9,-1) -- (4,-6) -- (4,-7);
 \draw [thick, mygreen] (-1,1) --  (-1,-7);
 \draw [thick, myblue] (9,1) --  (9,-7);
 \draw (0,1) -- (0,-7);
 \draw (3,1) -- (3,-7);
 \draw (5,1) -- (5,-7);
 \draw (8,1) -- (8,-7);
 \node at (-5,-3) {$d_{k;i}^{r}=$};
 \node at (9,-1) [right] {$\scriptstyle \lambda$};
 \node at (5.2,-5) [left] {$\scriptstyle \sigma$};
 \node at (8.2,-2) [left] {$\scriptstyle \sigma$};
 \node at (-1.5,-8) {$\scriptstyle M$};
 \node at (0,-8) {$\scriptstyle X$};
 \node at (1.5,-8) {$\scriptstyle \ldots$};
 \node at (4,-8) {$\scriptstyle X$};
 \node at (6.5,-8) {$\scriptstyle\ldots$};
 \node at (8,-8) {$\scriptstyle X$}; 
 \node at (9.3,-8) {$\scriptstyle N$};
\end{tikzpicture}\qquad
\begin{tikzpicture}[xscale=0.25,yscale=0.2]
 \draw [thick, mygreen] (4,-1) --  (4,2);
 \draw (-1,-3) rectangle (9,-1);
 \node at (4,-2) {$\scriptstyle f$}; 
 \draw [red,rounded corners] (4,1) -- (-2,-1) -- (-2,-3) -- (4,-6) -- (4,-7);
 \draw (0,-3) --  (0,-7);
 \draw (3,-3) -- (3,-7);
 \draw (5,-3) -- (5,-7);
 \draw (8,-3) -- (8,-7);
 \node at (-5,-3) {$d^{k;i}_{l} f =$};
 \node at (4,1)  [right] {$\scriptstyle \lambda$};
 \node at (-0.4,-4.7) [above right] {$\scriptstyle \sigma$};
 \node at (2.6,-6) [above right] {$\scriptstyle \sigma$};
 \node at (0,-8) {$\scriptstyle X$};
 \node at (1.5,-8) {$\scriptstyle \ldots$};
 \node at (4,-8) {$\scriptstyle X$};
 \node at (6.5,-8) {$\scriptstyle\ldots$};
 \node at (8,-8) {$\scriptstyle X$};
 \node at (4,2.5) {$\scriptstyle M$}; 
\end{tikzpicture}
   \caption{The $i$th terms of braided differentials. The $i$th $X$-strand moves to the left / to the right of all other $X$-strands, and then acts on coefficients.}\label{P:BrHom}
\end{figure}

\begin{definition}
The (co)homology groups of the complexes above, denoted by $H_k(X,\sigma;$ $M,N)$, %
 $H_k(X,\sigma;M)$, and $H^k(X,\sigma;M)$, are referred to as the \emph{braided (co)homology groups} of $(X,\sigma)$ with coefficients in $(M,N)$ or in~$M$ respectively.
\end{definition}

The theorem can be proved by easy diagram manipulations; an alternative argument will be sketched in the next section.

To give a better feeling of the braided differentials, we propose explicit formulas for a $g \in \Map (X^{\times 0},M)$ represented by $m \in M$, and for an $f \in \Map (X,M)$:
\begin{align}
&d^1g(x) = x \cdot m - m \cdot x, \notag\\
&d^2f(x) = x_1 \cdot f(x_2) + f(x_1) \cdot x_2 - x_2' \cdot f(x_1') - f(x_2') \cdot x_1',\notag\\
\intertext{where $(x_2',x_1') = \sigma (x_1,x_2)$. If the bimodule~$M$ is trivial (in the sense of Example~\ref{EX:Trivial}), then the formulas simplify: $d^1g(x) = 0$,}
&d^2f(x) = f(x_1) + f(x_2) -f(x_2') - f(x_1'). \label{E:d2}
\end{align}

It will sometimes be convenient to separate the $l$- and the $r$-terms (referred to as \emph{left} and \emph{right}, for diagrammatic reasons) in the braided differentials, writing
\begin{align}\label{E:BrDiffCosh}
d_k &= d_k^l + (-1)^k d_k^r, & d^k &= d^k_l + (-1)^k d^k_r.
\end{align}
In these left and right differentials, each term comes with the sign $(-1)^{cr(D)}$, where $cr(D)$ is the crossing number of the corresponding diagram~$D$. This is a Koszul sign: it switches each time two $X$-terms change places (which is realized by an application of the braiding~$\sigma$, i.e., by a crossing in our diagram).

\begin{remark}\label{R:Cub}
As the form of our differentials suggests, they come from a \emph{pre-cubical} structure \cite{Lebed1}. Since it is not essential for this paper, we develop this pre-cubical viewpoint as a series of remarks only.
\end{remark}

\begin{example}\label{EX:FactorizationDiff}
Let us identify braided differentials for a monoid~$X$ equipped with the braiding $\sigma(x,x') = (1,xx')$ (Example~\ref{EX:Factorization}). As an $(X,\sigma)$-bimodule, take a linear bimodule~$M$ over the monoid $X$. The differentials on $M \otimes_{\ZZ} \ZZ X^{\times k}$ read
\begin{align*}
d_k^l (m,x_1,\ldots,x_k) &= (m \cdot x_1,x_2,\ldots,x_k) - (m,x_1x_2,x_3,\ldots,x_k) + \cdots \\
&\qquad\qquad + (-1)^{k-1} (m,x_1,\ldots,x_{k-2},x_{k-1}x_k), \\
d_k^r (m,x_1,\ldots,x_k) &= (x_k \cdot m,x_1,\ldots,x_{k-1}) + (\text{ some terms containing } 1\text{'s}).
\end{align*}
In~$d_k^l$ one readily recognizes the bar construction. The full differential $d_k^l + (-1)^k d_k^r$ yields the Hochschild complex, after modding out the terms with $x_i=1$ for at least one~$i$. These terms form a subcomplex, which can be checked either directly, or via the critical subcomplex approach (Section~\ref{S:BrHomSG}), or using the degeneracies 
$$s_i \colon (m,x_1,\ldots,x_k) \mapsto (m,x_1,\ldots,x_{i-1},1,x_i,\ldots,x_k)$$ 
from~\cite{Lebed1}. Dually, one recovers the Hochschild cohomology when restricting to the critical subcomplex $CrC^k$ of the maps $X^{\times k} \to M$ vanishing whenever one of the arguments is~$1$.
\end{example}

See~\cite{Lebed1,Lebed2,LebedVendramin} for other examples of (co)homology theories interpreted in the braided framework.

\section{Cup product}\label{S:BrHomCup}

In key cases, braided complexes carry more structure than the bare differential. They thus capture more information about the braided set. This additional structure is best presented using an alternative interpretation of the braided (co)homology, in terms of the {quantum shuffles} of Rosso \cite{Rosso1Short,Rosso2}. 

Concretely, the \emph{shuffle sets} are the permutation sets
$$Sh_{p_1,p_2,\ldots, p_t}=\Bigg\{ s \in S_{p_1+p_2+\cdots+p_t}  \;
\begin{array}{|c}
\scriptstyle s(1)<s(2)<\ldots<s(p_1), \\
\scriptstyle s(p_1+1)<\ldots<s(p_1+p_2), \\
\scriptstyle s(p+1)<\ldots<s(p+p_t)
 \end{array}
 \Bigg\} $$
with $p=p_1+\cdots+p_{t-1}$. Morally, one permutes $p_1+p_2+\cdots+p_{t}$ elements preserving the order within $t$ consecutive blocks of size $p_1, \ldots, p_t$, just like when shuffling cards. Recall further the projection $B_k^+ \twoheadrightarrow S_k$, $b_i \mapsto s_i$, and its set-theoretical section 
\begin{align*}
 S_k & \hookrightarrow  B_k^+,\\
 s = s_{i_1}s_{i_2}\cdots s_{i_t}& \mapsto b_{i_1}b_{i_2}\cdots b_{i_t} =:T_s, 
\end{align*}
where $s_{i_1}s_{i_2}\cdots s_{i_t}$ is any of the shortest words representing $s\in S_k$. Now the \emph{quantum shuffle product} on $\ZZ\langle X \rangle= \oplus_{k \geqslant 0} \ZZ X^{\times k}$
is the $\ZZ$-linear extension of the maps
\begin{align}
\sh\!{\,}_{p,q} &:= \sum_{s\in Sh_{p,q}} T_s \, \colon  X^{\times p} \times X^{\times q} \to \ZZ X^{\times(p+q)}.\label{E:qu_sh}
\end{align}
Explicitly, for $\boldsymbol{w} \in X^{\times p}$ and $\boldsymbol{v} \in X^{\times q}$, we put $\boldsymbol{w}\sh\boldsymbol{v}= \sum_{s\in Sh_{p,q}} T_s (\boldsymbol{w}\boldsymbol{v})$. Dually, the \emph{quantum shuffle coproduct} on $\ZZ\langle X \rangle$ is defined by $\csh|_{X^{\times k}} := \displaystyle \sum_{p+q=k;\:p,q\ge 0} \csh^{p,q}$,
\begin{align}\label{E:qu_cosh}
\csh^{p,q} &:= \sum_{s\in Sh_{p,q}} T_{s^{-1}} \, \colon X^{\times (p+q)} \to \ZZ X^{\times p}\times X^{\times q}.
\end{align} 
Typical terms of $\sh\!{\,}_{p,q}$ and $\csh^{p,q}$ are depicted in Figure~\ref{P:Shuffle}. Replacing the operators~$T_{s}$ with $(-1)^{|s|}T_{s}$ in all the formulas above, one gets a product and a coproduct denoted by~$\Sh$ and~$\Csh$ respectively. For instance, the case $p=2, q=1$ yields 
\begin{align*}
xy \sh z &= xyz + xz'y' + z''x'y', & xy \Sh z &= xyz - xz'y' + z''x'y',
\end{align*}
where $x,y,z \in X$, $\sigma(y,z)=(z',y')$, $\sigma(x,z')=(z'',x')$.

\begin{figure}[!h]\centering
\begin{tikzpicture}[xscale=0.35,yscale=-0.4] 
 \draw [myred] (-1,0) -- (4,3);
 \draw [myblue] (0,0) -- (-1,3);
 \draw [myred] (1,0) -- (5,3);
 \draw [myblue] (2,0) -- (0,3); 
 \draw [myblue] (3,0) -- (1,3); 
 \draw [myred] (4,0) -- (6,3); 
 \draw [myred] (5,0) -- (7,3); 
 \draw [myblue] (6,0) -- (2,3); 
 \draw [myred] (7,0) -- (8,3);
 \node at (3,-.7) {$\scriptstyle X^{\times (p+q)}$};  
 \node at (0.5,3.7) {$\scriptstyle X^{\times p}$};  
 \node at (6,3.7) {$\scriptstyle X^{\times q}$};  
\end{tikzpicture} \hspace*{2cm}
\begin{tikzpicture}[xscale=0.35,yscale=0.4] 
 \draw [myred] (-1,0) -- (4,3);
 \draw [myblue] (0,0) -- (-1,3);
 \draw [myred] (1,0) -- (5,3);
 \draw [myblue] (2,0) -- (0,3); 
 \draw [myblue] (3,0) -- (1,3); 
 \draw [myred] (4,0) -- (6,3); 
 \draw [myred] (5,0) -- (7,3); 
 \draw [myblue] (6,0) -- (2,3); 
 \draw [myred] (7,0) -- (8,3);
 \node at (3,-.7) {$\scriptstyle X^{\times (p+q)}$};  
 \node at (0.5,3.7) {$\scriptstyle X^{\times p}$};  
 \node at (6,3.7) {$\scriptstyle X^{\times q}$};  
\end{tikzpicture} 
   \caption{Quantum shuffle product and coproduct.}\label{P:Shuffle}
\end{figure}
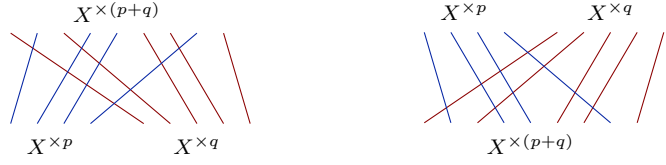

\begin{proposition}[\cite{Rosso1Short}]\label{PR:Shuffle}
The maps~$\sh$ and~$\Sh$ yield an associative product on~$\ZZ\langle X \rangle$. The maps~$\csh$ and~$\Csh$ yield a coassociative coproduct on~$\ZZ\langle X \rangle$.
\end{proposition}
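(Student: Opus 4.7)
The plan is to reduce the whole proposition to a combinatorial identity in the symmetric groups, paired with the standard multiplicativity of the set-theoretic section $s \mapsto T_s \colon S_k \hookrightarrow B_k^+$.  By Matsumoto's theorem, any two reduced expressions for $s \in S_k$ are related by the braid relations $s_is_j = s_js_i$ (for $|i-j|>1$) and $s_is_{i+1}s_i = s_{i+1}s_is_{i+1}$, which already hold in $B_k^+$ by~\eqref{E:Bn}; hence $T_s$ is independent of the reduced word chosen.  Consequently, whenever $s,t \in S_k$ satisfy $|st| = |s|+|t|$ (i.e., the concatenation of reduced words for $s$ and $t$ is itself reduced for $st$), one has $T_{st} = T_s T_t$ in $B_k^+$, and the sign $(-1)^{|\cdot|}$ multiplies correctly as well.

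The combinatorial core of the proof is the pair of bijections
\[
Sh_{p+q,r} \times Sh_{p,q} \;\cong\; Sh_{p,q,r} \;\cong\; Sh_{p,q+r} \times Sh_{q,r},
\]
where $Sh_{p,q,r}$ denotes the permutations of $\{1,\ldots,p+q+r\}$ preserving order within each of three consecutive blocks of sizes $p$, $q$, $r$.  The maps send $(t_1,t_2) \mapsto t_1 \cdot (t_2 \times \Id_r)$ and $(s_1,s_2) \mapsto s_1 \cdot (\Id_p \times s_2)$, with inverses recovered by tracking which original block each image position comes from.  Moreover, both factorizations are \emph{length-additive}: the inversions of $t_1 \cdot (t_2 \times \Id_r)$ split cleanly into inversions of $t_2$ (internal to the first $p+q$ positions) and inversions of $t_1$ (between the $(p+q)$-block and the $r$-block), with no overlap; the other bracketing is symmetric.

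Associativity of $\sh$ then follows by direct calculation:
\begin{align*}
(\boldsymbol{u}\sh\boldsymbol{v})\sh\boldsymbol{w} &= \sum_{t_1 \in Sh_{p+q,r},\, t_2 \in Sh_{p,q}} T_{t_1}\,T_{t_2 \times \Id_r}(\boldsymbol{uvw}) = \sum_{s \in Sh_{p,q,r}} T_s(\boldsymbol{uvw}), \\
\boldsymbol{u}\sh(\boldsymbol{v}\sh\boldsymbol{w}) &= \sum_{s_1 \in Sh_{p,q+r},\, s_2 \in Sh_{q,r}} T_{s_1}\,T_{\Id_p \times s_2}(\boldsymbol{uvw}) = \sum_{s \in Sh_{p,q,r}} T_s(\boldsymbol{uvw}),
\end{align*}
and attaching signs $(-1)^{|s|}$ throughout handles $\Sh$.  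Coassociativity of $\csh$ and $\Csh$ follows by the dual computation: since $|s^{-1}|=|s|$, the same length-additivity yields $T_{\Id_p \times s_2^{-1}} \circ T_{s_1^{-1}} = T_{(s_1 \cdot (\Id_p \times s_2))^{-1}}$ and the symmetric identity for the other bracketing, so both iterated coproducts sum over $\{T_{s^{-1}} : s \in Sh_{p,q,r}\}$. Diagrammatically this is Figure~\ref{P:Shuffle} read upside down.

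The main technical obstacle is the length-additivity claim in the combinatorial step: one must confirm by an explicit inversion count that no pair of generators cancels when concatenating reduced expressions of $t_1$ and $t_2 \times \Id_r$ (respectively $s_1$ and $\Id_p \times s_2$), so that the multiplicativity of $T$ applies. Matsumoto's theorem, the well-definedness of $s \mapsto T_s$, and the dualization from product to coproduct are otherwise routine.
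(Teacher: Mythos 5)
The paper offers no proof of this proposition: it is quoted verbatim from \cite{Rosso1Short}, so there is no in-text argument to compare yours against. Your proof is correct and is the standard combinatorial argument for (co)associativity of quantum shuffles. The two ingredients you isolate are exactly the right ones: (i) Matsumoto's theorem makes $s \mapsto T_s$ well defined and multiplicative on length-additive products, since a concatenation of reduced words for $s$ and $t$ with $|st|=|s|+|t|$ is itself reduced; and (ii) the bijections $Sh_{p+q,r} \times Sh_{p,q} \cong Sh_{p,q,r} \cong Sh_{p,q+r} \times Sh_{q,r}$ with additivity of lengths, which is the standard factorization of minimal coset representatives for the chain of parabolic subgroups $S_p \times S_q \times S_r \subset S_{p+q}\times S_r \subset S_{p+q+r}$ (your inversion count is the elementary verification: pairs inside the first $p+q$ positions contribute the inversions of $t_2$, pairs straddling the blocks contribute those of $t_1$, and pairs inside the last $r$ positions contribute nothing). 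The only point I would flag is that what you call the ``main technical obstacle'' is in fact this routine and classical fact, which you have already essentially verified; it does not need further confirmation. Your dualization to $\csh$ and $\Csh$ via $|s^{-1}|=|s|$ and $T_{(uv)^{-1}} = T_{v^{-1}}T_{u^{-1}}$ for length-additive products is also correct, and matches the diagrammatic reading of Figure~\ref{P:Shuffle} that the paper hints at for related statements. Note also that no invertibility of $\sigma$ is needed anywhere, since everything takes place in the positive braid monoid $B_k^+$ acting on $X^{\times k}$ --- consistent with the paper's setting of non-invertible braidings.
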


The differentials from Theorem~\ref{T:BrHom}, Point~1 decompose as 
\begin{align*}
d_k &= d_k^l + (-1)^k d_k^r,& d_k^l &= (\rho \times \Id_{\cdots}) \Csh^{1,k-1},& d_k^r &= (\Id_{\cdots} \times \lambda) \Csh^{k-1,1},
\end{align*}
and similarly for Points 2-3. Here and afterwards we use abusive notations of type $\Csh^{p,q} = \Id_M \times \Csh^{p,q} \times \Id_N$. The signed coproduct~$\Csh$ takes care of the Koszul signs in the definition of~$d_k$. The relation $d_{k-1} d_k = 0$ follows from the coassociativity of~$\Csh$ and the relations translating the definition of braided modules:
\begin{align*}
\rho (\rho \times \Id_X) (\Id_M \times \Csh^{1,1}) &= 0, & \lambda (\Id_X \times \lambda) (\Csh^{1,1} \times \Id_N) &= 0.
\end{align*}

We now show how to combine the quantum shuffle coproduct on~$\ZZ\langle X \rangle$ and a product on~$M$ into a cup product on the braided cohomology $H^*(X,\sigma;M)$.

\begin{theorem}\label{T:CupProd}
Let $(M,\rho,\lambda, \mu)$ be a bimodule-algebra over a braided set $(X,\sigma)$. For two maps $f \colon X^{\times p} \to M$ and $g \colon X^{\times q} \to M$, put
\begin{align}
f \smile g& = \mu (f \times g) \Csh^{p,q} \, \colon X^{\times (p+q)} \to M.\label{E:Cup}
\end{align}
This turns $(\bigoplus_{k \ge 0} \Map (X^{\times k},M),d^{k})$ into a differential graded associative algebra, and induces an associative product on $H^* (X,\sigma;M) := \bigoplus_{k \ge 0} H^k(X,\sigma;M)$, also denoted by~$\smile$. For a commutative ring~$\kk$ with trivial $(X,\sigma)$-actions, the product~$\smile$ on $H^* (X,\sigma;\kk)$ is graded commutative, in the sense of 
 \begin{align*}
[f] \smile [g] &= (-1)^{pq} [g] \smile [f] & \text{ for all }\; f \colon X^{\times p} \to \kk, \, g \colon X^{\times q} \to \kk. 
 \end{align*}
\end{theorem}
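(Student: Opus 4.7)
The plan is to establish a DGA structure at the cochain level; the induced associative product on $H^*(X,\sigma;M)$ then follows automatically, while graded commutativity in the trivial-coefficient case requires a separately constructed cochain homotopy. Associativity of $\smile$ at the cochain level is the easy step: by coassociativity of $\Csh$ (Proposition~\ref{PR:Shuffle}) and associativity of $\mu$, both $(f \smile g) \smile h$ and $f \smile (g \smile h)$ unfold to the same expression $\mu^{(3)}(f \times g \times h)\,\Csh^{p,q,r}$, where $\Csh^{p,q,r}$ is the common value of $(\Csh^{p,q} \times \Id)\,\Csh^{p+q,r} = (\Id \times \Csh^{q,r})\,\Csh^{p,q+r}$.

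The core work is the Leibniz rule $d^{p+q+1}(f \smile g) = (d^{p+1}f) \smile g + (-1)^p f \smile (d^{q+1}g)$. Using the decomposition $d^k = d^k_l + (-1)^k d^k_r$ from~\eqref{E:BrDiffCosh}, I would reduce it to three cochain-level identities: (i) $d^{p+q+1}_l(f \smile g) = (d^{p+1}_l f) \smile g$; (ii) $d^{p+q+1}_r(f \smile g) = f \smile (d^{q+1}_r g)$; (iii) $(d^{p+1}_r f) \smile g = f \smile (d^{q+1}_l g)$. For (i), both sides factor through the iterated coproduct $\Csh^{1,p,q} := (\Id \times \Csh^{p,q})\,\Csh^{1,p+q} = (\Csh^{1,p} \times \Id)\,\Csh^{p+1,q}$ by coassociativity, and the bimodule-algebra axiom $\mu(x \cdot m_1, m_2) = x \cdot \mu(m_1, m_2)$ absorbs the outer $\lambda$-action into $\mu$. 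Identity (ii) is its mirror, invoking $\mu(m_1, m_2 \cdot x) = \mu(m_1, m_2) \cdot x$. The crucial identity (iii): both sides factor through $\Csh^{p,1,q}$ by the same coassociativity, and their equality is exactly the transfer axiom $\mu(m_1 \cdot x, m_2) = \mu(m_1, x \cdot m_2)$. Summing (i)--(iii) with the signs of~\eqref{E:BrDiffCosh}, the cross-terms $(-1)^{p+1}(d^{p+1}_r f)\smile g$ and $(-1)^p f \smile (d^{q+1}_l g)$ cancel thanks to (iii), leaving precisely the Leibniz rule.

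With Leibniz and associativity, $(\bigoplus_k \Map(X^{\times k}, M), d^\ast)$ is a DGA, so $\smile$ descends to an associative product on $H^*(X,\sigma;M)$ by a standard argument. Graded commutativity in the trivial-coefficient case over a commutative ring~$\kk$ is the main obstacle, since the identity $f \smile g = (-1)^{pq} g \smile f$ fails at the cochain level: $\Csh^{p,q}$ and the reversed coproduct $\Csh^{q,p}$ are distinct operators. The strategy, to be carried out in Section~\ref{S:BrHomCircle}, is to construct an explicit cochain operation~$\circ$ providing a homotopy of the schematic form $d(f \circ g) = (df)\circ g \pm f \circ (dg) \pm \bigl(f \smile g - (-1)^{pq} g \smile f\bigr)$, so that in cohomology $f \smile g$ and $(-1)^{pq} g \smile f$ coincide. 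Diagrammatically, $f \circ g$ should interpolate between $\Csh^{p,q}$ and $\Csh^{q,p}$ by means of a partial, ``half-twist'' shuffle in the spirit of Steenrod's classical $\smile_1$-product; pinning down the correct interpolation and verifying the homotopy identity will be the technical bulk of that section.
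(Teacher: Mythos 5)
Your proposal matches the paper's proof essentially step for step: associativity via coassociativity of $\Csh$ and associativity of $\mu$ (both sides reducing to $\mu^{2}(f\times g\times h)\Csh^{p,q,r}$), the Leibniz rule via exactly the same three sub-identities \eqref{E:dga}--\eqref{E:dga''} proved from the three bimodule-algebra axioms, with the same sign cancellation of the cross-terms, and graded commutativity deferred to the explicit circle-product homotopy of Theorem~\ref{T:Cup0Cup1}. The argument is correct and there is nothing to add.
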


\begin{definition}
The above products $\smile$ are called \emph{cup products}.
\end{definition}

For trivial coefficients $M = \kk$, the cup product was defined from a completely different viewpoint by Farinati and Garc{\'{\i}}a-Galofre \cite{FarinatiGalofre}. Its commutativity in cohomology was not established in their work.

\begin{proof}
The associativity of~$\smile$ follows from that of~$\mu$ combined with the coassociativity of~$\Csh$. Indeed, extend the definition~\eqref{E:qu_cosh} to
\begin{align*}
\Csh^{p_1,\ldots, p_t} := \sum_{s\in Sh_{p_1,\ldots, p_t}} (-1)^{|s|} T_{s^{-1}} \, \colon X^{\times (p_1+\cdots+p_t)} \to \ZZ X^{\times p_1}\times \cdots \times X^{\times p_t}.
\end{align*} 
Then both sides of the associativity relation for $f \colon X^{\times p} \to M$, $g \colon X^{\times q} \to M$, and $h \colon X^{\times r} \to M$ equal $\mu^2 (f \times g \times h) \Csh^{p,q,r}$, where $\mu^2 = \mu (\mu \times \Id_M)$.

Let us now check that~$\smile$ is compatible with the differentials, in the sense of
\begin{align}\label{E:dgaTotal}
d^{p+q+1}(f \smile g) = d^{p+1}(f) \smile g + (-1)^p f \smile d^{q+1}(g).
\end{align}
Using the decomposition~\eqref{E:BrDiffCosh}, it suffices to establish the relations
\begin{align}
d^{p+q+1}_l(f \smile g) &= d^{p+1}_l(f) \smile g, \label{E:dga}\\ 
d^{p+q+1}_r(f \smile g) &= f \smile d^{q+1}_r(g), \label{E:dga'}\\ 
d^{p+1}_r(f) \smile g &= f \smile d^{q+1}_l(g). \label{E:dga''}
\end{align}
Using the coassociativity of~$\Csh$ and the compatibility of the product~$\mu$ with the actions $\rho$, $\lambda$ for our bimodule-algebra~$M$, one writes both sides of~\eqref{E:dga} as $\lambda (\Id_X \times \mu) (\Id_X \times f \times g) \Csh^{1,p,q}$. 
Similarly, both sides of~\eqref{E:dga'} and~\eqref{E:dga''} equal, respectively, 
$\rho (\mu \times \Id_X) (f \times g \times \Id_X) \Csh^{p,q,1}$ and $\mu (\rho \times \Id_M) (f \times \Id_X \times g) \Csh^{p,1,q}$.

As a consequence, $\smile$ induces an associative product on $H^* (X,\sigma;M)$. For trivial commutative coefficients, the commutativity of this induced product follows from Theorem~\ref{T:Cup0Cup1}. 
\end{proof} 

The theorem also admits a graphical proof, using the diagrammatic interpretation of the braided differentials (Figure~\ref{P:BrHom}) and of the quantum shuffle coproduct (Figure~\ref{P:Shuffle}).

\begin{remark}
The theorem, except for the graded commutativity statement, remains valid for YBE solutions in any preadditive monoidal category, with the same proof.
\end{remark}

\begin{example}\label{EX:FactorizationCup}
Let us resume the example of a monoid~$X$ with $\sigma(x,x') = (1,xx')$ and a linear $X$-bimodule~$M$. Additionally, we need an associative product~$\mu$ on~$M$ compatible with the $X$-actions. Basic examples are any ring~$M$ with trivial $X$-actions, or the linearization of~$X$ with the actions given by the multiplication on~$X$. The cup product restricts to the critical subcomplex $(CrC^k,d^k)$ (Example~\ref{EX:FactorizationDiff}). Indeed, if $\boldsymbol{w} \in X^{\times (p+q)}$ contains a~$1$, then so does $T_s (\boldsymbol{w})$ for any $s \in S_{p+q}$, and thus $f \smile g  = \sum_{s\in Sh_{p,q}} (-1)^{|s|}\mu(f \times g) T_{s^{-1}}$ is zero on such~$\boldsymbol{w}$ if both $f \in CrC^p$ and $g \in CrC^q$ vanish whenever one of their arguments is~$1$. Further, the very particular form of our braiding forces $T_s (\boldsymbol{w})$ to contain a~$1$ for any $\boldsymbol{w} \in X^{\times (p+q)}$ provided that $s \neq \Id$. So in $f \smile g$ all the terms but one vanish, yielding
\begin{align}\label{E:CupHoch}
f \smile g(x_1,\ldots, x_{p+q}) &= f(x_1,\ldots, x_{p})g(x_{p+1},\ldots, x_{p+q}).
\end{align}
One recognizes the classical cup product for the Hochschild cohomology.
\end{example}

\begin{remark}
The product $f \smile g$ can also be defined when one of $f$ and~$g$ takes values in~$X$: it suffices to replace~$\mu$ with $\lambda$ or~$\rho$. The differential of a map $f \colon X^{\times (k-1)} \to M$ can then be expressed as the graded commutator
\begin{align*}
d^{k}(f) &= \Id_X \smile f - (-1)^{k-1} f \smile \Id_X.
\end{align*} 
This directly implies the compatibility~\eqref{E:dgaTotal} between~$d^k$ and~$\smile$. 
\end{remark}

\begin{remark}\label{R:Dendriform}
The shuffle set $Sh_{p,q}$ decomposes into two parts, which comprise permutations $s \in Sh_{p,q}$ satisfying $1 = s(1)$ and $1 = s(p+1)$ respectively. This induces a decomposition $\Csh^{p,q} = \Csh^{p,q,\leftarrow} + \Csh^{p,q,\rightarrow}$ of the quantum shuffle coproduct, and hence a decomposition of the cup product on the cochain level: 
$$f \smile g = f \underset{\leftarrow}{\smile} g + f \underset{\rightarrow}{\smile} g.$$ 
Compatibility relations between $\Csh^{p,q,\leftarrow}$ and $\Csh^{p,q,\rightarrow}$ imply that $(\underset{\leftarrow}{\smile},\underset{\rightarrow}{\smile})$ is a graded \emph{dendriform algebra} structure\footnote{This structure was introduced by Loday under the name \emph{dual-dialgebra} \cite{LodayDiFr,LodayDi}.} on cochains. In general this decomposition does not survive in cohomology. However it does so when restricted to the subcomplex of maps $X^{\times k} \to M$ satisfying the symmetry condition $d^{k;1}_{l}f = d^{k;1}_{r}f$, or explicitly
$$x \cdot f(x_1, \ldots,x_k) = f(x'_1, \ldots,x'_k) \cdot x',$$
with $x'_1 \ldots x'_k x' = b_k \cdots b_1(x x_1 \ldots x_k)$. 
\end{remark}

\section{Circle product}\label{S:BrHomCircle}

Let a braided set $(X,\sigma)$ act trivially on a commutative ring~$\kk$. In order to finish the proof of Theorem~\ref{T:CupProd}, we will show that for coefficients in~$\kk$, the cup product on the cochain level is commutative up to an explicit homotopy. 

Take an integer $k>0$. On $C^k= \Map (X^{\times k},\kk)$, one has the bilinear commutative associative convolution product 
$$f \ast g(\boldsymbol{w}) = f(\boldsymbol{w}) g(\boldsymbol{w}).$$ 
Here the product in~$\kk$ is written as $(a,b) \mapsto ab$. For disjoint subsets $L$, $R$ of $\{1,2,\ldots,k\}$, let $\langle L,R \rangle$ be the number of couples $i \in L, j \in R$ with $i>j$. Let $i_1 < i_2 < \ldots < i_t$ be the properly ordered elements of $L \sqcup R$. Put $\omega_s = l$ if $i_s \in L$ and $\omega_s = r$ if $i_s \in R$. Consider the maps
\begin{align*}
d^{k;L,R} &= d^{k;i_t}_{\omega_t} \cdots d^{k-t+1;i_1}_{\omega_1} \colon C^{k-t} \to C^k.
\end{align*}
Finally, define the bilinear operation $\circ \colon C^{p} \times C^{q} \to C^{k}$, $k=p+q-1$, by
\begin{align}
f \circ g \, &= \,\sum (-1)^{(q-1) \# J_1 + \langle J_1,I_1\rangle + \langle I_2,J_2\rangle} d^{k;I_2,I_1}(f) \ast d^{k;J_1,J_2} (g),\label{E:Circ}
\end{align}
where the summation is over all $t \in \{1,\ldots,k\}$ and all decompositions $\{1,\ldots,t-1\} = I_1 \sqcup J_1$, $\{t+1,\ldots,k\} = I_2 \sqcup J_2$, with $\# I_1 + \# I_2 = q-1$, $\# J_1 + \# J_2 = p-1$. This operation is well defined. Its typical term is represented in Figure~\ref{P:Circle}. 
\begin{figure}[!h]\centering
\begin{tikzpicture}[xscale=0.65,yscale=0.35]
\draw [rounded corners, myred] (1,0)--(1,0.5)--(3,3)--(3,4)--(8.5,8)--(8.5,9);
\draw [rounded corners, myblue] (2,0)--(2,0.5)--(0,3.5)--(0,4.5)--(3,7.5)--(3,9);
\draw [rounded corners, myred] (3,0)--(3,0.5)--(4,3)--(4,4)--(9.5,8)--(9.5,9);
\draw [rounded corners, myblue] (4,0)--(4,0.5)--(1,3.5)--(1,4.5)--(4,7.5)--(4,9);
\draw [rounded corners] (5,0)--(5,9);
\draw [rounded corners, mygreen] (6,0)--(6,0.5)--(8,3.5)--(8,4.5)--(6,7.5)--(6,9);
\draw [rounded corners, mygreen,shift={(1,0)}] (6,0)--(6,0.5)--(8,3.5)--(8,4.5)--(6,7.5)--(6,9);
\draw [rounded corners, myviolet] (8,0)--(8,0.5)--(6,3.5)--(6,4.5)--(0,7.5)--(0,9);
\draw [dotted, thick] (2.5,4)--(6.5,4);
\draw [dotted, thick] (2.5,8)--(7.5,8);
\node at (2.5,4.4) {$\scriptstyle g$};
\node at (2.3,8.3) {$\scriptstyle f$};
\node at (8.5,9) [above, myred]{$\scriptstyle x''_1$};
\node at (3,9) [above, myblue]{$\scriptstyle x''_2$};
\node at (9.5,9) [above, myred]{$\scriptstyle x''_3$};
\node at (4,9) [above, myblue]{$\scriptstyle x''_4$};
\node at (5,9) [above]{$\scriptstyle x''_5$};
\node at (6,9) [above, mygreen]{$\scriptstyle x''_6$};
\node at (7,9) [above, mygreen]{$\scriptstyle x''_7$};
\node at (0,9) [above, myviolet]{$\scriptstyle x''_8$};
\node at (2.8,3.5) [right, myred]{$\scriptstyle x'_1$};
\node at (0.2,4) [left, myblue]{$\scriptstyle x'_2$};
\node at (3.8,3.5) [right, myred]{$\scriptstyle x'_3$};
\node at (1.2,4) [left, myblue]{$\scriptstyle x'_4$};
\node at (4.8,3.5) [right]{$\scriptstyle x'_5$};
\node at (2.8,3.5) [right, myred]{$\scriptstyle x'_1$};
\node at (7.8,4) [right, mygreen]{$\scriptstyle x'_6$};
\node at (2.8,3.5) [right, myred]{$\scriptstyle x'_1$};
\node at (8.8,4) [right, mygreen]{$\scriptstyle x'_7$};
\node at (5.9,3.5) [right, myviolet]{$\scriptstyle x'_8$};
\node at (1,0) [below, myred]{$\scriptstyle x_1$};
\node at (1,-1) [below, myred]{$\scriptstyle I_1$};
\node at (3,0) [below, myred]{$\scriptstyle x_3$};
\node at (3,-1) [below, myred]{$\scriptstyle I_1$};
\node at (2,0) [below, myblue]{$\scriptstyle x_2$};
\node at (2,-1) [below, myblue]{$\scriptstyle J_1$};
\node at (4,0) [below, myblue]{$\scriptstyle x_4$};
\node at (4,-1) [below, myblue]{$\scriptstyle J_1$};
\node at (5,0) [below]{$\scriptstyle x_5$};
\node at (6,0) [below, mygreen]{$\scriptstyle x_6$};
\node at (6,-1) [below, mygreen]{$\scriptstyle J_2$};
\node at (7,0) [below, mygreen]{$\scriptstyle x_7$};
\node at (7,-1) [below, mygreen]{$\scriptstyle J_2$};
\node at (8,0) [below, myviolet]{$\scriptstyle x_8$};
\node at (8,-1) [below, myviolet]{$\scriptstyle I_2$};
\end{tikzpicture} 
\caption{For $f \in C^5$, $g \in C^4$, $f \circ g$ is a signed sum containing the term $\pm d^{8;{\color{myviolet}\{8\}},{\color{myred}\{1,3\}}}(f) \ast d^{8;{\color{myblue}\{2,4\}},{\color{mygreen}\{6,7\}}} (g)(x_1,\ldots,x_8) = f(x''_2,x''_4,x''_5,x''_6,x''_7) g(x'_1,x'_3,x'_5,x'_8)$.}\label{P:Circle}
\end{figure}
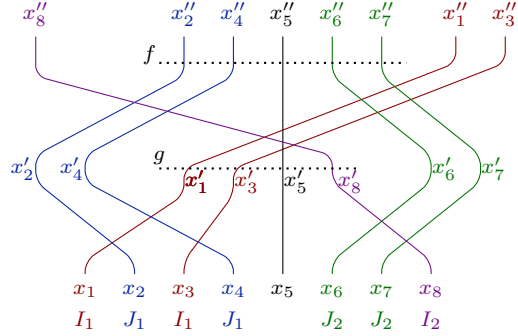
Here $f$ and~$g$ are evaluated on the arguments connected by the dotted lines. The $X$-strands preserve their color when passing through these dotted lines. The sign our term comes with contains two parts: the rearrangement of the arguments~$x_i$ is responsible for the Koszul sign $(-1)^{\langle J_1,I_1\rangle + \langle I_2,J_2\rangle}$, which can also be obtained from the crossing number of the part of the diagram below the $g$-line; the sign $(-1)^{(q-1) \# J_1}$ is produced when $g$ moves across the $\# J_1$ left $X$-strands in order to attain its arguments. The operation~$\circ$ should be compared with the dioperadic composition: see \cite{Gan} for the first mention, and \cite[Fig.~1]{KaWaZu} for the relevant diagrammatic version.

\begin{definition}
The operation~$\circ$ above is called the \emph{circle product}.
\end{definition}

\begin{example}\label{EX:FactorizationCicrle}
Let us resume our favorite example of a monoid~$X$, choosing trivial coefficients~$\kk$. An argument similar to that from Example~\ref{EX:FactorizationCup} shows that, for $f \in CrC^p$, $g \in CrC^q$, the terms of $f \circ g$ vanish except when $I_1 = \emptyset$ and $\langle L_2,R_2 \rangle=0$. The remaining terms are easy to write down explicitly:
\begin{align*}
f \circ g &= \sum_{t=1}^p (-1)^{(t-1)(q-1)} f(x_1,\ldots,x_{t-1}, x_t \cdots x_{t+q-1}, x_{t+q},\ldots,x_{p+q-1}) g(x_t, \ldots, x_{t+q-1}).
\end{align*}
This is the usual $\circ$-product for the group cohomology with trivial coefficients (which in this cases coincides with the Hochschild cohomology).
\end{example}

For small values of~$p$ or~$q$ the definition of~$\circ$ becomes less scary. For $f \in C^p$, $g \in C^1$, one computes
\begin{align}
f \circ g (x_1, \ldots,x_p) &= f(x_1, \ldots,x_p) \sum\nolimits_{i=1}^p g(x_i),\label{E:CircDeg1}\\
g \circ f (x_1, \ldots,x_p) &= \sum\nolimits_{i=1}^p g(x''_i)f(x_1, \ldots,x_p),\label{E:CircDeg1'}
\end{align} 
where $(x''_{p}, \ldots, x''_{1}) = \Delta_p (x_{1}, \ldots, x_{p})$. The element $\Delta_p \in B^+_p$ is defined by~\eqref{E:Delta}, and as usual acts on~$X^{\times p}$ via the braiding~$\sigma$. If $g$ is a cocycle, then the definition~\eqref{E:d2} of $d^2$ and the commutativity of~$\kk$ imply $f \circ g = g \circ f$.

For $f \in C^p$, $g \in C^2$, one computes
\begin{align*}
f \circ g (x_1, \ldots,x_{p+1}) = \sum\nolimits_{i=1}^{p+1}(-1)^{i}(&f(x'_1,\ldots,x'_{i-1},x_{i+1},\ldots,x_{p+1})g^l(x_1,\ldots,x_i)\\
&-f(x_1,\ldots,x_{i-1},x''_{i+1},\ldots,x''_{p+1})g^r(x_i,\ldots,x_{p+1})),
\end{align*}
\begin{align*}
g^l(x_1,\ldots,x_i)& = g(x_{i-1},x_i) + g(x_{i-2},x_i^{l,2})+\cdots + g(x_{1},x_i^{l,i-1}),\\
g^r(x_i,\ldots,x_{p+1}) &= g(x_{i},x_{i+1}) + g(x_i^{r,2},x_{i+2}) +\cdots + g(x_i^{r,p+1-i},x_{p+1}), 
\end{align*} 
where $\sigma(x_{i-1},x_i) = (x_i^{l,2},x'_{i-1})$, $\sigma(x_{i-2},x_i^{l,2}) = (x_i^{l,3},x'_{i-2})$ etc., and similarly for the right counterparts $x_i^{r,j}$ and $x''_{j}$. Recalling the definition of braided differentials, one recognizes in $-f \circ g$ the terms $\pm d^{p+1;i}_{\omega}f$ of $d^{p+1}f$, $\omega \in \{l,r\}$, taken with the $g^{\omega}$-weights. These weights are the sums of the evaluations of~$g$ on all the crossings of the diagram representing $d^{p+1;i}_{\omega}f$. The example from Figure~\ref{P:CircleForq2} should clarify this description. 
\begin{figure}[!h]\centering
\begin{tikzpicture}[xscale=0.7,yscale=0.5]
 \draw [thick, mygreen] (4,-1.8) --  (4,0);
 \draw (-1,-3) rectangle (9,-1.8);
 \node at (4,-2.4) {$\scriptstyle f$}; 
 \node at (-.4,-4.6) {$\scriptstyle g$};  
 \draw [dotted, thick] (-.3,-4.3)--(1.2,-4.3);
 \node at (2.6,-6.1) {$\scriptstyle g$};  
 \draw [dotted, thick] (2.7,-5.8)--(4.2,-5.8); 
 \node at (4.1,-6.6) {$\scriptstyle g$};  
 \draw [dotted, thick] (4.2,-6.6)--(5.7,-6.6);  
 \draw [red,rounded corners] (4,-0.5) -- (-2,-1.5) -- (-2,-3) -- (6,-7) -- (6,-8);
 \draw (0,-3) --  (0,-8);
 \draw (3,-3) -- (3,-8);
 \draw (4.5,-3) -- (4.5,-8);
 \draw (8,-3) -- (8,-8);
 \node at (-2.3,-2.4) [myblue] {$\scriptstyle \boldsymbol{x'_i}$};
 \node at (0,-8.5) [myblue] {$\scriptstyle \boldsymbol{x_1}$};
 \node at (3,-8.5) [myblue] {$\scriptstyle \boldsymbol{x_{i-2}}$}; 
 \node at (1.3,-5.3) [myblue] {$\scriptstyle \boldsymbol{x_i^{l,i-1}}$}; 
 \node at (3.75,-7.6) [myblue] {$\scriptstyle \boldsymbol{x_i^{l,2}}$};
 \draw [thick,rounded corners,-latex,myblue] (3.6,-7.4) -- (3.55,-6.6) -- (4,-6); 
 \node at (4.5,-8.5) [myblue] {$\scriptstyle \boldsymbol{x_{i-1}}$};
 \node at (6,-8.5) [myblue] {$\scriptstyle \boldsymbol{x_i}$}; 
 \node at (8,-8.5) [myblue] {$\scriptstyle \boldsymbol{x_{p+1}}$};
 \node at (0.3,-3.4) [myblue] {$\scriptstyle \boldsymbol{x'_1}$}; 
 \node at (3.5,-3.5) [myblue] {$\scriptstyle \boldsymbol{x'_{i-2}}$};
 \node at (5,-3.5) [myblue] {$\scriptstyle \boldsymbol{x'_{i-1}}$};  
\end{tikzpicture}
   \caption{The $g^l$-weight modifying $d^{p+1;i}_{l}f$ in the computation of $f \circ g$, $g \in C^2$.}\label{P:CircleForq2}
\end{figure}
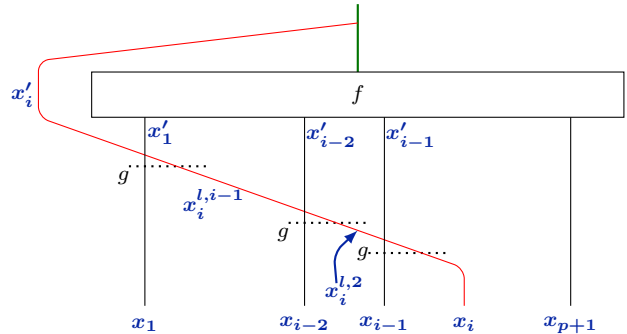

\begin{theorem}\label{T:Cup0Cup1}
Let a braided set $(X,\sigma)$ act trivially on a commutative ring~$\kk$. The circle product measures the commutativity defect of the cup product for braided cochains of $(X,\sigma)$ with coefficients in~$\kk$. Concretely, for maps $f \colon X^{\times p} \to \kk$, $g \colon X^{\times q} \to \kk$, one has
\begin{align}
d^{p+q}(f \circ g) &- (-1)^{q-1}(d^{p+1} f) \circ g - f \circ (d^{q+1} g) = \notag\\
 &(-1)^{q}( g \smile f - (-1)^{pq} f \smile g).\label{E:CircCup}
\end{align}
\end{theorem}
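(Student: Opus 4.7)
The identity is the braided avatar of Gerstenhaber's classical formula for Hochschild cochains (cf. Example~\ref{EX:FactorizationCicrle}), and I would prove it by following the same scheme: expand each side as a signed sum over labelled diagrams, establish a sign-reversing pairing which cancels everything but a handful of boundary terms, and identify those boundary terms with $g \smile f$ and $f \smile g$.

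The first step is to expand every $d^{p+q}$, $d^{p+1}$, $d^{q+1}$ appearing in \eqref{E:CircCup} using the decomposition \eqref{E:BrDiffCosh}, and every $\circ$ using \eqref{E:Circ}. Each resulting summand corresponds to a diagram built from: a circle-decomposition datum $\tau=(t,I_1,J_1,I_2,J_2)$, drawn as in Figure~\ref{P:Circle}; together with one extra ``differential strand'' of type $\omega\in\{l,r\}$ attached at some position $j$. Depending on whether this extra strand is grafted between the $\circ$-diagram and the top, onto the $f$-box, or onto the $g$-box, the corresponding summand appears in $d^{p+q}(f\circ g)$, in $(d^{p+1}f)\circ g$, or in $f\circ(d^{q+1}g)$ respectively. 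Each diagram carries a Koszul sign $(-1)^{cr(D)}$ equal to its crossing number, plus the explicit alternating signs coming from the differentials and from the factor $(-1)^{(q-1)\#J_1}$ in the definition of $\circ$.

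Next, I would construct a sign-reversing involution on the set of these labelled diagrams, pairing a term of $d^{p+q}(f\circ g)$ with a term of $(d^{p+1}f)\circ g$ or $f\circ(d^{q+1}g)$ whenever the extra differential strand can be ``re-interpreted'' as landing on one of $f,g$. Concretely, an $l$-strand in $d^{p+q}(f\circ g)$ emanating from position $j$ and passing \emph{through} the $f$-box becomes, after absorption into $f$, an $l$-strand of $d^{p+1}f$; similarly for $r$-strands and for $g$. The cancellation of signs in each such pair follows from the coassociativity of $\Csh$ and the observation that both interpretations of the diagram share the same underlying crossing pattern, so that the Koszul signs coincide and the external $(-1)^{q-1}$ in $-(-1)^{q-1}(d^{p+1}f)\circ g$ precisely compensates the difference between $d^{p+q}$ and $d^{p+1}$. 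The involution fails exactly at boundary configurations in which the extra strand cannot be absorbed into either box.

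Finally, I would enumerate the unpaired diagrams. For the $l$-type extra strand these correspond to the decomposition $t=q$, $I_1=J_1=\emptyset$, $j=1$: the entire $g$-block is pulled to the far left of the entire $f$-block, yielding, with its sign, $(-1)^q\, g\smile f$. Dually, the $r$-type extra strand with $t=p$, $I_2=J_2=\emptyset$, $j=p+q$ pushes the $g$-block to the far right of $f$, producing $-(-1)^{q+pq}\, f\smile g$. Summing the two gives the right-hand side of~\eqref{E:CircCup}.

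The main obstacle will be the sign bookkeeping: three independent sign sources (the Koszul sign $(-1)^{\langle J_1,I_1\rangle+\langle I_2,J_2\rangle}$ from argument re-ordering, the sign $(-1)^{(q-1)\#J_1}$ recording the relative height of $g$, and the alternating signs $(-1)^{i-1}$ from each braided differential) interact nontrivially with the external $(-1)^{q-1}$, so verifying that the pairing of Step~2 is genuinely sign-reversing is the real content of the argument. The graphical rule that every crossing contributes a single $-1$, together with the fact that absorbing a differential strand into $f$ or $g$ changes neither the set of crossings nor the ambient diagram, should reduce each verification to a local computation near the absorbed strand.
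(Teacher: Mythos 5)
Your overall scheme---expand all three summands of the left-hand side into terms indexed by a circle-decomposition datum together with one extra differential index of type $l$ or $r$, cancel almost everything in signed pairs, and identify the survivors with the two cup products---is exactly the strategy of the paper's proof. However, two of your three steps have concrete gaps. First, your cancellation is set up as an involution that only matches terms of $d^{p+q}(f\circ g)$ against terms of $(d^{p+1}f)\circ g$ or $f\circ(d^{q+1}g)$. That cannot suffice: in the paper's bookkeeping, two of the six cancelling families pair terms of $(d^{p+1}f)\circ g$ directly with terms of $f\circ(d^{q+1}g)$ (with no counterpart in $d^{p+q}(f\circ g)$ at all), and the ``middle'' terms---those where the extra differential index lands exactly at the seam between the $I_1\sqcup J_1$-block and the rest---mostly cancel \emph{among themselves}, each surviving configuration occurring twice with opposite signs from two different sources. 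An involution of the shape you describe would leave the wrong residue, since the actual survivors come from the middle terms of $(d^{p+1}f)\circ g$ and $f\circ(d^{q+1}g)$, not from $d^{p+q}(f\circ g)$.

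Second, your identification of the unpaired terms is incorrect as stated. You claim each cup product arises from a \emph{single} unpaired diagram (e.g.\ ``$t=q$, $I_1=J_1=\emptyset$, $j=1$''---note this is already internally inconsistent, since $I_1\sqcup J_1$ must exhaust $\{1,\dots,t-1\}$). In this braided (pre-cubical) setting the cup product $g\smile f=\mu(g\times f)\Csh^{q,p}$ is a sum over all $(q,p)$-shuffles, so the unpaired terms must form a family of $\binom{p+q}{p}$ diagrams, one for each decomposition $\{1,\dots,p+q\}=I_2\sqcup J_2$ with $\#I_2=q$; these are the paper's ``complete terms with $t=0$'' (and dually $t=k+1$ for $f\smile g$). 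You appear to be importing the Hochschild picture of Example~\ref{EX:FactorizationCicrle}, where all but one shuffle term vanishes; that simplification is unavailable for a general braiding. Your final signs $(-1)^{q}$ and $(-1)^{pq+q+1}$ do agree with the paper's, but the enumeration of survivors and the cancellation scheme producing them both need to be rebuilt around these two points.
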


The graded commutativity in cohomology follows.

\begin{proof}
By definitions, $d^{p+q}(f \circ g)$ is a signed sum of terms of the form $d^{k+1;i}_{\omega}(d^{k;I_2,I_1}(f) \ast d^{k;J_1,J_2} (g))$, $\omega \in \{l,r\}$, $k = p+q-1$. Such a term is called \emph{initial} if $i \le \# I_1 + \# J_1+1$, and \emph{final} otherwise. Similarly, a term $d^{k+1;I_2,I_1}(d^{p+1;i}_{\omega} f) \ast d^{k+1;J_1,J_2} (g)$ of $(d^{p+1} f) \circ g$ is \emph{initial} if $i \le \# J_1$, \emph{middle} if $i = \# J_1+1$, and \emph{final} otherwise. A term $d^{k+1;I_2,I_1}(f) \ast d^{k+1;J_1,J_2} (d^{q+1;i}_{\omega} g)$ of $f \circ (d^{q+1} g)$ is \emph{initial} if $i \le \# I_1$, \emph{middle} if $i = \# I_1+1$, and \emph{final} otherwise. As usual, such terms are declared \emph{left} or \emph{right} depending on $\omega \in \{l,r\}$.

Now, on the left-hand side of~\eqref{E:CircCup}, most terms annihilate each other, namely
\begin{itemize}
\item the left initial terms of $d^{p+q}(f \circ g)$ and the left initial terms of $(d^{p+1} f) \circ g$;
\item the left final terms of $d^{p+q}(f \circ g)$ and the left final terms of $f \circ (d^{q+1} g)$;
\item the right initial terms of $d^{p+q}(f \circ g)$ and the right initial terms of $f \circ (d^{q+1} g)$;
\item the right final terms of $d^{p+q}(f \circ g)$ and the right final terms of $(d^{p+1} f) \circ g$;
\item the right initial terms of $(d^{p+1} f) \circ g$ and the left initial terms of $f \circ (d^{q+1} g)$;
\item the left final terms of $(d^{p+1} f) \circ g$ and the right final terms of $f \circ (d^{q+1} g)$.
\end{itemize}
A possible verification combines diagrammatic reasoning with careful sign book-keeping. Thus only the middle terms remain. They all have the form $\pm d^{k+1;I_2,I_1}(f) \ast d^{k+1;J_1,J_2} (g)$ for some $t \in \{0,\ldots,k+1\}$ and some decompositions $\{1,\ldots,t\} = I_1 \sqcup J_1$, $\{t+1,\ldots,k+1\} = I_2 \sqcup J_2$; such terms will be called \emph{complete}. The middle terms from $(d^{p+1}_r f) \circ g$ and  $f \circ (d^{q+1}_l g)$ yield all the complete terms with $t \in I_1$ and $t \in J_1$ respectively. Similarly,  the middle terms from $(d^{p+1}_l f) \circ g$ and  $f \circ (d^{q+1}_r g)$ yield all the complete terms with $t+1 \in I_2$ and $t+1 \in J_2$. Thus each complete term with $t \neq 0, k+1$ appears on the left-hand side of~\eqref{E:CircCup} twice, with opposite signs.  The complete terms with $t=0$ sum up to $(-1)^{q}g \smile f$, and those with $t=k+1$ to $(-1)^{pq+q+1} f \smile g$. One obtains precisely the right-hand side of~\eqref{E:CircCup}.
\end{proof}

It would be interesting to know if a weaker form of~\eqref{E:CircCup} and the resulting commutativity of the cup product in cohomology remain valid for more general coefficients. For instance, observe that the definition of~$\circ$ remains valid for any bimodule-algebra as coefficients. Further, the commutativity of~$\kk$ is used only at the end of the proof above; without it the right-hand side of~\eqref{E:CircCup} should be replaced with 
$$(-1)^{q}( \mu \tau (g \times f) \Csh^{q,p} - (-1)^{pq} f \smile g),$$
where $\mu$ is the product on~$\kk$, and $\tau$ is the flip $(a,b) \mapsto (b,a)$.

It is also natural to ask if our operations $\smile, \circ$ satisfy other properties of \emph{homotopy Gerstenhaber algebras}, as is the case for the simplicial cohomology, or for the Hochschild cohomology with coefficient in the monoid~$X$ itself. (See~\cite{HGerst} for the original definitions, and~\cite{HGerstKade} for a recent concise overview.) The answer is negative. For example, the Hirsch formula
$$(f \smile g) \circ h = f \smile (g \circ h) + (-1)^{|g|(|h|-1)} (f \circ h) \smile g,$$
where we write $|g| = q$ for $g \in C^q$, fails even for $f,g,h \in C^1$, unless $h$ is a cocycle. This is easily checked using formulas \eqref{E:CircDeg1}-\eqref{E:CircDeg1'}. Similarly, the pre-Lie condition
$$(f \circ g) \circ h - f \circ (g \circ h) = (-1)^{(|g|-1)(|h|-1)} ((f \circ h) \circ g - f \circ (h \circ g))$$
generally fails for $h \in C^1$, this time even when it is a constant cocycle!

\begin{remark}
As our definition of the circle product suggests, it generalizes verbatim to any pre-cubical set. Furthermore, for trivial coefficients the cup product can be computed by the formula
\begin{align}\label{E:CupCub}
f \smile g \, &= \,\sum (-1)^{\langle I,J\rangle} d^{p+q;\emptyset, J}(f) \ast d^{p+q;I,\emptyset} (g),
\end{align}
with the sum running over all decompositions $\{1,\ldots,p+q\} = I \sqcup J$, $\# I=p$. This definition, as well as our proof of relation~\eqref{E:CircCup}, work well in the pre-cubical setting. We thus recover the cup product for the pre-cubical cohomology, known already to Serre~\cite[Section II.1]{SerreThesis}. He deduced its graded commutativity from that of the cup product for the more classical pre-simplicial cohomology, the two theories being related by the Eilenberg--Zilber map. A detailed exposition of this approach, with explicit formulas, was given by Clauwens~\cite{Clauwens}. He also developed applications to the cohomology of self-distributive structures, which is a particular case of braided, and thus pre-cubical, cohomology \cite{HomologyYB,Lebed1}. In the self-distributive case, Covez~\cite{Covez} refined the commutative structure in cohomology into a Leibniz one, with the help of the decomposition from Remark~\ref{R:Dendriform}. The circle product in the pre-cubical setting is due to Baues~\cite{Baues}. Kadeishvili~\cite{KadeishviliDGHopf} included it into an infinite series of Steenrod-like operations $\smile_i \colon C^p \otimes C^q \to C^{p+q-i}$, $\smile_0 = \smile$, $\smile_1 = \circ$, such that each~$\smile_i$ is graded (anti)\-commutative 
 up to a homotopy given by~$\smile_{i+1}$. This is compatible with the property $f \circ g = g \circ f$ we established for a cocycle $g \in C^1$. In spite of this general theory, we presented here detailed constructions and proofs for the particular case of braided cohomology, for several reasons. First, our constructions are given by concise explicit formulas and, alternatively, by handy diagrammatic calculus, unavailable in the general situation. Second, our definition~\eqref{E:Cup} of the cup product differs from and better behaves than the pre-cubical definition~\eqref{E:CupCub} when the coefficients are not trivial. For instance, for the Hochschild cohomology, the latter yields the operation
 $$f \smile g(x_1,\ldots, x_{p+q}) = (f(x_1,\ldots, x_{p}) \cdot x_{p+1} \cdots x_{p+q})(x_1 \cdots x_{p} \cdot g(x_{p+1},\ldots, x_{p+q}))$$
instead of the usual Hochschild cup product.
\end{remark}

\section{Braided (co)homology for idempotent braidings}\label{S:BrHomSG}

This section describes certain subcomplexes and quotients of the braided (co)chain complexes for $(X,\sigma)$. For an idempotent $\sigma$, they are shown to compute the (co)homology of the structure monoid $\Mon(X,\sigma)$, while being significantly smaller than the complexes given by the bar resolution. Applications to the computation of the (co)homology of factorizable monoids are given here. Similar applications for plactic monoids are described in~\cite{LebedPlactic}. To improve the readability, we postponed the rather technical proofs of the results of this section until Section~\ref{S:Proof}.

Recall the braided (co)chain complexes from Theorem~\ref{T:BrHom}, and the cup and circle products on cochains defined by~\eqref{E:Cup} and~\eqref{E:Circ}.

\begin{proposition}\label{PR:InvarSubcx}
Let $(X,\sigma)$ be a braided set, and $R$ a sub-group of $\ZZ X^{\times 2}$ such that
\begin{enumerate}[label=\Alph*.]
\item\label{I:Fixed} (the linearization of) $\sigma$ restricts to the identity on~$R$; 
\item\label{I:Invar} $R$ is a sub-$(X,\sigma)$-bimodule of $\ZZ X^{\times 2}$ for the adjoint actions (Example~\ref{EX:Adjoint});
\item\label{I:Invar'} for any $\boldsymbol{r} \in R$ written as $\sum_i \alpha_i (x_i,y_i)$, $\alpha_i \in \ZZ\setminus \{0\}$, $x_i,y_i \in X$, and for any $x \in X$, the left/right adjoint actions of the $x_iy_i$ on~$x$ coincide for all~$i$.
\end{enumerate}
Denote by $T(X;R)$ the two-sided ideal $\ZZ\langle X \rangle R \ZZ\langle X \rangle$ of $\ZZ\langle X \rangle$, and by $T_k(X;R)$ its degree~$k$ component. Then for the same type of coefficients $M,N$ as in Theorem~\ref{T:BrHom},
\begin{enumerate}
\item $M \otimes T(X;R) \otimes N$ is a subcomplex of $(M \otimes_{\ZZ} \ZZ X^{\times k} \otimes_{\ZZ} N, d_k)$;
\item $M \otimes T(X;R)$ is a subcomplex of $(M \otimes_{\ZZ} \ZZ X^{\times k}, d_k)$;
\item the maps $X^{\times k} \to M$ whose linearization vanishes on $T_k(X;R)$ form a subcomplex of $(\Map (X^{\times k},M), d^k)$, closed under the cup and circle products if $M$ is a bimodule-algebra. 
\end{enumerate}
\end{proposition}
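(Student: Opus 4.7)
The plan is to analyse how the differentials and the cup and circle products act on a generator $\boldsymbol{u} \otimes \boldsymbol{r} \otimes \boldsymbol{v}$ of $T_k(X;R)$, with $\boldsymbol{r} = \sum_i \alpha_i (x_i,y_i) \in R$ placed at some two consecutive positions $a+1, a+2$. The three hypotheses on~$R$ will play complementary roles: condition~\ref{I:Fixed} translates into the key identity $b_{a+1}(\boldsymbol{u} \otimes \boldsymbol{r} \otimes \boldsymbol{v}) = \boldsymbol{u} \otimes \boldsymbol{r} \otimes \boldsymbol{v}$ in $\ZZ X^{\times k}$; condition~\ref{I:Invar} ensures that the adjoint action of any single strand on the $R$-block still yields an element of~$R$; and condition~\ref{I:Invar'} says that the reciprocal transformation undergone by the passing strand is independent of the summation index~$i$, so after taking $\sum_i \alpha_i$ the strand factors out of the pair.

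For Points 1 and 2 I would expand $d_k = \sum_j (-1)^{j-1}(d_{k;j}^l - d_{k;j}^r)$ and treat each $j$ separately. If $j \notin \{a+1,a+2\}$, the $j$-th strand sweeps across the $R$-block through two applications of~$\sigma$, and conditions~\ref{I:Invar}, \ref{I:Invar'} together show that the image still contains an $R$-pair and hence lies in $T_{k-1}(X;R)$. If $j \in \{a+1,a+2\}$, the factorisation $b_1 \cdots b_{a+1} = (b_1 \cdots b_a)\,b_{a+1}$ in the braid monoid yields the operator identity $d_{k;a+2}^l = d_{k;a+1}^l \circ b_{a+1}$ on chains, and by condition~\ref{I:Fixed} the two operators coincide on our generator; their opposite signs in $d_k$ produce a cancellation. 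A mirror-image argument treats $d_k^r$ via $b_{k-a-1} \cdots b_1 = (b_{k-a-1} \cdots b_2)\,b_1$. Point~3 for the differential $d^k$ follows from the dual identity $d^{k;a+2}_\omega f(\boldsymbol{w}) = d^{k;a+1}_\omega f(b_{a+1}(\boldsymbol{w}))$ combined with condition~\ref{I:Fixed}.

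For the cup product, I would expand $(f \smile g)(\boldsymbol{u} \otimes \boldsymbol{r} \otimes \boldsymbol{v})$ via~\eqref{E:Cup} and sort each shuffle $s \in Sh_{p,q}$ according to the blocks into which positions $a+1, a+2$ are sent. If both land in the same block, the order-preservation property of shuffles keeps them adjacent after the braid $T_{s^{-1}}$, and repeated use of conditions~\ref{I:Invar}, \ref{I:Invar'} for the strands that successively cross the $R$-block shows that, after summing over~$i$, this block lies in $T_p(X;R)$ or $T_q(X;R)$, so the corresponding factor of $f$ or $g$ vanishes. If they split between blocks, I would pair~$s$ with the shuffle~$s'$ exchanging the block-assignments of~$a+1$ and~$a+2$; a direct check yields $s'^{-1} = s^{-1}\cdot s_{a+1}$ with $|s'| = |s|+1$, so $T_{s'^{-1}} = T_{s^{-1}}\, b_{a+1}$ in $B^+_{p+q}$, and condition~\ref{I:Fixed} forces the two contributions to be equal while their signs $(-1)^{|s|}$ are opposite, yielding the desired cancellation. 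The circle product is handled by the same trichotomy, with one additional case: when the ``shared strand'' indexed by~$t$ in~\eqref{E:Circ} is one of the two strands of the $R$-pair, the two possibilities $t=a+1$ and $t=a+2$ pair up under the same $b_{a+1}$-twist and cancel by condition~\ref{I:Fixed}.

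The hard part will be the combinatorial book-keeping in the cup and circle cases: verifying that every intermediate crossing respects $R$-membership (so that the block containing the transformed $R$-pair is genuinely in~$T$, not merely carries some trace of it), and matching the Koszul signs of~\eqref{E:Cup} and~\eqref{E:Circ} against the $b_{a+1}$-twist driving the pairwise cancellations. That said, the whole argument remains local to the $R$-block and ultimately reduces, step by step, to conditions~\ref{I:Fixed}, \ref{I:Invar} and~\ref{I:Invar'}.
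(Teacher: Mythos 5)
Your treatment of the differentials and of the cup product is correct, and it is essentially the paper's own argument unrolled: the paper proves once that $T(X;R)$ is a co-ideal of $\ZZ\langle X\rangle$ for the signed shuffle coproduct~$\Csh$ (terms where the $R$-pair stays in one block remain in $T(X;R)$ by \ref{I:Invar}--\ref{I:Invar'}; terms where it splits cancel in pairs via the extra crossing $b_{a+1}$ and condition~\ref{I:Fixed}), and then reads off Points 1--3 from the factorizations $d_k^l=(\rho\times\Id)\Csh^{1,k-1}$, $d_k^r=(\Id\times\lambda)\Csh^{k-1,1}$ and $f\smile g=\mu(f\times g)\Csh^{p,q}$. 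Your case analysis of the $d_{k;j}^{l/r}$ and of the shuffles $s$, including the identity $T_{s'^{-1}}=T_{s^{-1}}b_{a+1}$ with $|s'|=|s|+1$, is exactly that computation.

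The gap is in your last sentence, on the circle product. When the shared strand $t$ of~\eqref{E:Circ} is one of the two positions of the $R$-pair, the terms with $t=a+1$ and $t=a+2$ do \emph{not} pair up with opposite signs: already for $f\in C^2$, $g\in C^1$, formula~\eqref{E:CircDeg1} gives $f\circ g(x_1,x_2)=f(x_1,x_2)\,(g(x_1)+g(x_2))$, so the two shared-strand contributions $f(x_1,x_2)g(x_1)$ and $f(x_1,x_2)g(x_2)$ both carry the sign $+1$ and cannot cancel via condition~\ref{I:Fixed}. The mechanism the paper uses for this case is different: if $t$ is one end of the pair, the other end $t\pm1$ lands entirely in $f$'s argument list or entirely in $g$'s, adjacent to the shared strand, so that $f$ (or $g$) is evaluated on the adjoint-transformed pair, which by \ref{I:Invar}--\ref{I:Invar'} still lies in $T(X;R)$; hence that term vanishes on its own. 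You should replace your pairing argument by this individual-vanishing argument. (Note that even this requires the remaining convolution factor to be constant across the sum $\sum_i\alpha_i$ defining $\boldsymbol{r}$ --- automatic when $R$ is spanned by single $\sigma$-fixed pairs, as for $R_-$, which is the case the paper actually uses --- so the circle-product part of Point~3 is genuinely more delicate than the rest.)
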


\begin{notation}
The quotients by the subcomplexes from Points~1 and~2 are denoted by $C_*(X; M,N;R)$ and $C_*(X; M;R)$. Notation $C^*(X; M; R)$ is used  for the subcomplex from the last point.
\end{notation}

We now give two examples of sub-groups $R$ of $\ZZ X^{\times 2}$ satisfying the required conditions. The quotients and subcomplexes associated to these~$R$ were considered, from a different perspective, by Farinati and Garc{\'{\i}}a-Galofre \cite{FarinatiGalofre}.

First, let $R_{+}$ be the subgroup generated by $(x,y)+\sigma(x,y)$ for all $x,y \in X$. It always satisfies conditions \ref{I:Invar}-\ref{I:Invar'}, while \ref{I:Fixed} is equivalent to the involutivity of~$\sigma$. Thus for involutive~$\sigma$ our proposition applies to~$R_{+}$. Taking as coefficients the structure monoid $M=\Mon(X,\sigma)$ (Example~\ref{EX:StructureSGAsBimodAlg}), Farinati and Garc{\'{\i}}a-Galofre showed the complex $(\kk \otimes_{\ZZ} C_*(X; M,M;R_{+}),d_k)$ to give a free resolution of the algebra $\kk M$ viewed as a bimodule over itself, at least in characteristic zero. This resolution is much smaller than the bar resolution---among others since $\Mon(X,\sigma)$ is always infinite, even for finite~$X$.

Now, let $R_{-}$ be the subgroup generated by $\sigma$-invariant pairs $(x,y)$ (in the sense of $\sigma(x,y) = (x,y)$). Conditions \ref{I:Fixed}-\ref{I:Invar'} are automatic here, so our proposition applies. 

\begin{definition}
For $R = R_{-}$, the above complexes and their (co)homology are called the \emph{critical\footnote{The term \emph{normalized} is more usual for such quotients and sub-complexes. However, it would be in conflict with the normalization terminology: critical chains are precisely those where no subsequent elements form a normal word!} complexes / (co)homology} of $(X,\sigma)$. They are denoted by $CrC^*(X,\sigma; M)$, $CrH^*(X,\sigma; M)$ etc.
\end{definition}

\begin{definition}
Let $(X,\sigma)$ be a braided set. A word $x_1 \ldots x_k \in X^*$ is called \emph{critical} if $\sigma(x_i,x_{i+1}) \neq (x_i,x_{i+1})$ for all~$i$. The set of such words is denoted by $Cr_k(X,\sigma)$.
\end{definition}

Normal and critical words constitute two extreme word types in~$X^*$.

One has obvious decompositions $\ZZ X^{\times k} = T_k(X;R_{-}) \oplus \ZZ Cr_k(X,\sigma)$. It implies identifications of type 
$$CrC_*(X,\sigma; M,N) \simeq \bigoplus_{k \ge 0} M \otimes_{\ZZ} \ZZ Cr_k(X,\sigma) \otimes_{\ZZ} N.$$

In the pseudo-unital case, braided (co)chain complexes can be reduced even further:

\begin{proposition}\label{PR:UnitSubcx}
Let $(X,\sigma,1)$ be a PUIBS. Denote by $T(X,\sigma,1)$ the two-sided ideal $\ZZ\langle X \rangle (R_{-} + \ZZ 1) \ZZ\langle X \rangle$ of $\ZZ\langle X \rangle$, and by $T_k(X,\sigma,1)$ its degree~$k$ component. The assertions of Proposition~\ref{PR:InvarSubcx} remain valid with $T_{(k)}(X,\sigma,1)$ replacing $T_{(k)}(X;R)$ everywhere.
\end{proposition}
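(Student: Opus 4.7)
The plan is to reduce to Proposition~\ref{PR:InvarSubcx} by handling the letter~$1$ separately. First I would decompose $T(X,\sigma,1) = T(X;R_-) + T^1$, where $T^1 := \ZZ\langle X\rangle\cdot\ZZ 1\cdot\ZZ\langle X\rangle$ is the subgroup of chains containing at least one letter~$1$. Since $R_-$ automatically satisfies conditions \ref{I:Fixed}--\ref{I:Invar'}, Proposition~\ref{PR:InvarSubcx} already covers $T(X;R_-)$; I then need only verify that the braided differentials send $T^1$ into $T(X,\sigma,1)$, and that $\smile,\circ$ preserve the corresponding cochain subcomplex.

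The first key observation is that every element of $B_k^+$ acts on $X^{\times k}$ preserving the multiset of letters. So if $\boldsymbol{w}$ contains a~$1$, every braided rearrangement of $\boldsymbol{w}$ still contains a~$1$. In particular, each term of $d_k(\boldsymbol{w})$ still displays a~$1$ on the strands it keeps, except when the braiding has driven the~$1$ to the extreme position that is absorbed by the coefficient modules via $m\cdot 1=m$ or $1\cdot n=n$. I will call such terms \emph{absorbing} and will show that they lie in $T(X;R_-)$.

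For this I would prove the following auxiliary lemma: if $(X,\sigma,1)$ is a PUIBS and $x,y\in X\setminus\{1\}$ satisfy $\sigma(1,x)=(x,1)$ (a~$1$ ``passes through''~$x$) and $\sigma(1,y)=(1,y)$ (a~$1$ ``stops at''~$y$), then $\sigma(x,y)=(x,y)$. Its proof applies the YBE to $(1,x,y)$, analyses both sides through the two branches of axiom~1 for $\sigma(x,1)$ and $\sigma(1,y')$ with $(y',x')=\sigma(x,y)$, and concludes using the idempotency $\sigma^2=\sigma$. A mirror lemma will handle $\sigma(x,1)=(1,x),\sigma(y,1)=(y,1)\Rightarrow\sigma(y,x)=(y,x)$ for $d_{k;i}^r$. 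Iterating these lemmas along the trajectory of the absorbed~$1$ through the braid word $b_1\cdots b_{i-1}$ (respectively $b_{k-i}\cdots b_1$), each absorbing term will either already contain another~$1$ (landing in $T^1$) or create an invariant pair between two adjacent letters of the remaining chain (landing in $T(X;R_-)$). This establishes Parts~1 and~2.

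Part~3 is dual. Closure of the cochain subcomplex under $\smile$ and $\circ$ will follow from the diagrammatic descriptions of these operations (Figures~\ref{P:Shuffle} and~\ref{P:Circle}): they are built from the signed quantum shuffle coproduct $\Csh$, and multiset preservation sends a~$1$ in the input to a~$1$ in at least one output factor of the resulting tensor product, which is annihilated by any cochain in $C^*(X;M;R_-+\ZZ 1)$. The hardest part will be the sign and position book-keeping while iterating the auxiliary lemma across a whole braid word; a cleaner but longer route would encode the construction in the pre-cubical formalism of Remark~\ref{R:Cub} using the degeneracies $s_i$ inserting a~$1$ at position~$i$, under which $T^1$ becomes a union of degenerate chains automatically stable under the face maps.
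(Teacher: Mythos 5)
Your reduction to Proposition~\ref{PR:InvarSubcx} plus the span $T^1$ of words containing a letter~$1$ is the right starting point (the paper proceeds the same way), but the central claim of your plan is false: the ``absorbing'' terms do \emph{not} individually lie in $T(X,\sigma,1)$, and no auxiliary lemma about $\sigma$ can make them do so. Concretely, take the factorization braiding of Example~\ref{EX:Factorization} and the critical word $(k,1,h)$ with $k \in K\setminus\{1\}$, $h \in H\setminus\{1\}$ (criticality holds since $\sigma(k,1)=(1,k)$ and $\sigma(1,h)=(h,1)$). The left differential of $(m,k,1,h,n)$ contains the absorbing term $-d_{3;2}^{l}=-(m\cdot 1,k,h,n)=-(m,k,h,n)$, and $(k,h)$ is in general a critical word with no letter~$1$ (already for $G=\NN\times\NN$, $\sigma(k,h)=(h,k)\neq(k,h)$). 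So this term lies in neither $T^1$ nor $T(X;R_-)$, and iterating your lemma ``$\sigma(1,x)=(x,1),\ \sigma(1,y)=(1,y)\Rightarrow\sigma(x,y)=(x,y)$'' cannot produce an invariant pair that simply is not there --- here the hypotheses point the wrong way ($\sigma(k,1)=(1,k)$, $\sigma(1,h)=(h,1)$). What actually saves the statement, and what your plan is missing, is a \emph{cancellation between the two sides of the differential}: for a critical word $\boldsymbol{w}1\boldsymbol{v}$ there is exactly one left-absorbing term, equal to $(-1)^p(m,\boldsymbol{w}\boldsymbol{v},n)$ modulo the ideal, and exactly one right-absorbing term, which enters $d_k=d_k^l+(-1)^kd_k^r$ with the opposite total sign; they kill each other precisely because $1$ acts trivially on $M$ and $N$. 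In the example, $-d_{3;2}^{l}$ and $+d_{3;2}^{r}$ are $\mp(m,k,h,n)$ and cancel. A termwise containment argument cannot see this.

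Two further points. Your ``key observation'' that $B_k^+$ preserves the multiset of letters is false for general braidings (the same example has $\sigma(h,k)=(1,hk)$, destroying both letters); what is true, by axiom~1 of a pseudo-unit, is only that a letter~$1$ already present is never destroyed, which is the weaker fact you actually use and which does suffice for the cup product. For the circle product, however, ``a~$1$ survives on some strand'' is not enough: in a term $d^{k;I_2,I_1}(f)\ast d^{k;J_1,J_2}(g)$ the~$1$ can jump, via a move $\sigma(x,1)=(1,x)$, at an $I_1$--$J_2$ or $J_1$--$I_2$ crossing onto a strand that has already passed the evaluation line of~$g$ and never meets that of~$f$, so that neither cochain sees it. The paper treats exactly this case by noting that such a jump makes the crossing color-trivial, after which an R$\mathrm{III}$ chase (Figure~\ref{P:CircleChase1}) shows that $f$ is evaluated on a non-critical word and the term vanishes for that reason. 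Both this argument and the cancellation above are essential and absent from your proposal.
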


\begin{notation}
Notations $CrC^*(X,\sigma,1; M)$, $CrH^*(X,\sigma,1; M)$ etc. are used for the above complexes and their (co)homology. They are called the \emph{critical complexes / (co)homology} of $(X,\sigma,1)$. The \emph{critical words} are not supposed to contain the letter~$1$ for a PUIBS; their set is denoted by $Cr_k(X,\sigma,1)$.
\end{notation}

Again, decompositions $\ZZ X^{\times k} = T_k(X,\sigma,1) \oplus \ZZ Cr_k(X,\sigma,1)$ imply 
$$CrC_*(X,\sigma,1; M,N) \simeq \bigoplus_{k \ge 0} M \otimes_{\ZZ} \ZZ Cr_k(X,\sigma,1) \otimes_{\ZZ} N.$$

We now turn to a comparison between the braided (co)homology of a braided set and the Hochschild (co)homology of its structure monoid.

\begin{theorem}\label{T:TotalSh}
\begin{enumerate}
\item Let $(X,\sigma)$ be a braided set. Consider the linear maps
\begin{align*}
\TotSh_k \colon \qquad \ZZ X^{\times k} &\longrightarrow \ZZ X^{\times k} \hookrightarrow \ZZ \Mon(X,\sigma)^{\times k},\\
x_1 x_2 \cdots x_k & \longmapsto x_1 \Sh x_2 \Sh \cdots \Sh x_k.
\end{align*}
In all the three situations from Theorem~\ref{T:BrHom}, they yield morphisms between braided (co)\-chain complexes for $(X,\sigma)$ and Hochschild (co)chain complexes for $\Mon(X,\sigma)$ with the same coefficients. Moreover, $\TotSh_k$ vanishes on the non-critical part $T_k(X;R_{-})$ of $\ZZ X^{\times k}$.

\item If $(X,\sigma,1)$ is a PUIBS, consider the composition $\oTotSh_k$ of $\TotSh_k$, the projection $\ZZ \Mon(X,\sigma)^{\times k}$ $\twoheadrightarrow \ZZ \oMon(X,\sigma,1)^{\times k}$, and the map sending all $k$-tuples containing at least one empty word to~$0$. It yields morphisms between braided (co)chain complexes for $(X,\sigma)$ and normalized Hochschild (co)chain complexes for $\oMon(X,\sigma,1)$. Moreover, $\oTotSh_k$ vanishes on the non-critical part $T_k(X,\sigma,1)$ of $\ZZ X^{\times k}$.

\item In the case of cochain complexes with coefficients in a bimodule-algebra, the maps above preserve cup products.
\end{enumerate}
\end{theorem}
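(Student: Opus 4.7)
The plan is to verify at the chain level that $\TotSh_k$ respects the three structures in play: the braided and Hochschild differentials, the quotient to the critical subcomplex, and the cup product. The whole argument rests on two basic facts. First, for each $x,y \in X$ with $\sigma(x,y)=(y',x')$ one has $\TotSh_2(x,y)=(x,y)-(y',x')$, which vanishes when $(x,y)$ is $\sigma$-invariant and is in general annihilated by the multiplication $\mu \colon \ZZ\Mon^{\otimes 2} \to \ZZ\Mon$ because of the defining relation $xy=y'x'$; equivalently, Theorem~\ref{T:IdempotentBraiding} says that $\Mon(X,\sigma)$ is braided commutative, so $\mu \circ (\Id-\sigma) = 0$ on $X\times X \subset \Mon^{\times 2}$. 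Second, the signed quantum shuffle product $\Sh$ is associative (Proposition~\ref{PR:Shuffle}), so $x_1 \Sh \cdots \Sh x_k$ can be freely regrouped.

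For Part~1, the vanishing of $\TotSh_k$ on $T_k(X;R_{-})$ is immediate: a generator contains a $\sigma$-invariant adjacent pair $(x_i,x_{i+1})$, and by associativity of $\Sh$ the whole expression can be grouped so as to contain $x_i \Sh x_{i+1}=0$ as a factor. For the differential compatibility, the Hochschild differential on $M \otimes \ZZ\Mon^{\otimes k} \otimes N$ splits into two outer face operators $\rho_0$, $\lambda_k$ acting on the coefficients and $k-1$ middle multiplications $\mu_i$. The middle terms vanish on $\TotSh_k$-images by a pairing argument: partition $S_k$ into pairs $\{s, s_i s\}$, where $s_i$ denotes the transposition of $i$ and $i{+}1$; the two contributions to $\TotSh_k$ carry opposite signs, and their $T_s$- and $T_{s_i s}$-outputs differ only by an application of $\sigma$ at positions $i, i{+}1$ (by comparing reduced expressions), so $\mu_i \circ (\Id - \sigma_i) = 0$ forces the two to coincide in $\ZZ\Mon^{\otimes(k-1)}$ and cancel. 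The outer Hochschild faces must then match the braided differential $d_k = d_k^l + (-1)^k d_k^r$, which reduces to the one-sided identities $\rho_0 \circ \TotSh_k = \TotSh_{k-1} \circ d_k^l$ and $\lambda_k \circ \TotSh_k = \TotSh_{k-1} \circ d_k^r$ (with the signs of~\eqref{E:BrDiffCosh}). These come from the bialgebra compatibility between $\Sh$ and $\Csh$ on $\ZZ\langle X\rangle$: peeling off the first (resp.\ last) coordinate of $x_1 \Sh \cdots \Sh x_k$ recovers $\Csh^{1,k-1}(x_1\cdots x_k)$ (resp.\ $\Csh^{k-1,1}$), with the remainder reassembling into $\TotSh_{k-1}$. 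The same argument applies uniformly to the three coefficient situations of Theorem~\ref{T:BrHom}.

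For Part~2, the reduced map $\oTotSh_k$ is the composition of $\TotSh_k$, the algebra quotient $\ZZ\Mon \twoheadrightarrow \ZZ\oMon$, and the normalization projection killing tuples with an empty-word entry. Condition~1 of a pseudo-unit guarantees that a letter~$1$ remains a letter~$1$ under $\sigma$, so $\TotSh_k$ preserves the property ``contains a~$1$'', and any such tuple is killed by the normalization. Combined with the $R_{-}$ vanishing of Part~1, this yields $\oTotSh_k|_{T_k(X,\sigma,1)}=0$. Compatibility with the normalized Hochschild differential descends from Part~1 via the monoid quotient $\Mon \twoheadrightarrow \oMon$.

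For Part~3, the compatibility of $\TotSh$ with cup products amounts to the identity
\begin{equation*}
 \mu_M \circ (\tilde f \otimes \tilde g) \circ (\TotSh_p \otimes \TotSh_q) \circ \Csh^{p,q} = \mu_M \circ (\tilde f \otimes \tilde g) \circ \pi^{p,q} \circ \TotSh_{p+q},
\end{equation*}
where $\pi^{p,q}$ is the trivial $(p,q)$-split on $\ZZ\Mon^{\otimes(p+q)}$. Both sides expand as $\mu_M \circ (\tilde f \otimes \tilde g)$ applied to shuffled $(p+q)$-tuples; the discrepancy consists of shuffles that mix the two halves, and vanishes by the $\mu_i \circ \TotSh = 0$ computation of Part~1 together with the bimodule-algebra compatibility of $\mu_M$ with the $X$-actions. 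The main obstacle is the sign bookkeeping at the heart of Part~1 (matching the outer Hochschild faces with $d_k^l$, $d_k^r$) and the related bialgebra identity in Part~3. Both reduce to re-indexing signed shuffles in terms of a front/rest split and reconciling the Koszul signs of~$\Sh$ with those of~$\Csh$, which I expect to carry out either by induction on $k$ using associativity of $\Sh$ and coassociativity of $\Csh$, or by a direct diagrammatic argument in the graphical calculus of Sections~\ref{S:BrHom}--\ref{S:BrHomCircle}.
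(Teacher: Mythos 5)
Your proposal is correct and follows essentially the same route as the paper: your pairing $\{s, s_i s\}$ and your regrouping $x_i \Sh x_{i+1}$ are exactly the paper's two factorizations $Sh_k = (1-b_i)t_i = t'_i(1-b_i)$ in $\ZZ B_k^+$, your ``peeling off the first coordinate'' and your Part~3 identity are the paper's decomposition $\TotSh_{p+q} = (\TotSh_p \otimes \TotSh_q)\Csh^{p,q}$, and Part~2 is handled identically. The only caveat is that the shuffle/coshuffle factorization of the symmetrizer and the sign bookkeeping, which you defer, are also left unproved in the paper (they follow from the unique factorization of a permutation as a $(p,q)$-shuffle composed with a block permutation), so your level of detail matches or exceeds the original.
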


Points~1 and~3 are due to Farinati and Garc{\'{\i}}a-Galofre \cite{FarinatiGalofre}. For braidings associated to racks, a related result was obtained by Covez \cite[Section~5]{Covez}.

\begin{definition}
The maps from the proposition are called the \emph{quantum symmetrizer} and the \emph{reduced quantum symmetrizer} respectively.
\end{definition}

\begin{remark}
The quantum symmetrizer does not preserve circle products in general. Indeed, for a $2$-cochain $f$ and a $1$-cochain $g$ of $\Mon(X,\sigma)$, one calculates
$$\big( \TotSh_2^*(f \circ g) - \TotSh_2^*(f)\circ \TotSh_1^*(g)\big) (x,y) = f(y',x') d^2g(x,y)$$
for all $x,y \in X$, $(y',x') = \sigma(x,y)$.
\end{remark}

It is natural to ask if one can transform the (reduced) quantum symmetrizer into a quasi-isomorphism. Related questions recently appeared in~\cite{FarinatiGalofre} and~\cite{YangGraphYBE}. Theorem~\ref{T:TotalSh} suggests that one should restrict the braided (co)homology to its critical part. We now show that for idempotent braidings this is sufficient.

\begin{theorem}\label{T:BrHomIdempot}
Let $M$ be a linear bimodule over an idempotent braided set $(X,\sigma)$. 
We also regard it as a bimodule over the structure monoid $\Mon(X,\sigma)$. The quantum symmetrizer then induces quasi-isomorphisms between the critical (co)chain complex for $(X,\sigma)$ and the Hochschild (co)chain complex for $\Mon(X,\sigma)$, both with coefficients in~$M$: 

\begin{align*}
\TotSh_k \colon CrC_k(X,\sigma;M) &\overset{\text{q-iso}}{\longrightarrow} HC_k(\Mon(X,\sigma);M),\\
CrC^k(X,\sigma;M) &\overset{\text{q-iso}}{\longleftarrow} HC^k(\Mon(X,\sigma);M) \colon \TotSh^k_*.
\end{align*}
If moreover $(X,\sigma)$ admits a pseudo-unit~$1$ acting on~$M$ trivially, then the critical (co)chain complexes for $(X,\sigma,1)$ compute the Hochschild (co)homology of the reduced structure monoid:
\begin{align*}
\oTotSh_k \colon CrC_k(X,\sigma,1;M) &\overset{\text{q-iso}}{\longrightarrow} HC_k(\oMon(X,\sigma,1);M),\\
CrC^k(X,\sigma,1;M) &\overset{\text{q-iso}}{\longleftarrow} HC^k(\oMon(X,\sigma,1);M) \colon \oTotSh^k_*.
\end{align*}
\end{theorem}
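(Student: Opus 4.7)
The plan is to recognise both complexes as computing the Hochschild (co)homology of $\Mon(X,\sigma)$ from a small free bimodule resolution of $A := \ZZ\Mon(X,\sigma)$, and to compare that small resolution with the bar resolution via the quantum symmetrizer. Writing $A^e := A \otimes_\ZZ A^{\mathrm{op}}$, I would construct a complex $P_*$ of free $A^e$-modules with
\[
P_k \;:=\; A \otimes_\ZZ \ZZ Cr_k(X,\sigma) \otimes_\ZZ A,
\]
augmented onto $A$ by multiplication, such that $M \otimes_{A^e} P_*$ recovers $CrC_*(X,\sigma;M)$ (and dually $\Hom_{A^e}(P_*,M)$ recovers the cochain complex $CrC^*(X,\sigma;M)$). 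If $P_*$ is shown to be a resolution and $\TotSh$ lifts to a morphism of bimodule resolutions from $P_*$ to the standard bar resolution $B_*(A)$, the desired quasi-isomorphism follows at once from the comparison theorem.

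The differentials on $P_*$ are obtained by bilinearly extending the braided differentials of $CrC_*$, using the quantum shuffle coproduct decomposition $d_k^l = (\rho \times \Id)\Csh^{1,k-1}$, $d_k^r = (\Id \times \lambda)\Csh^{k-1,1}$ from the discussion after Proposition~\ref{PR:Shuffle}, with $\rho,\lambda$ now standing for right and left multiplication of $A$ on itself. The identity $d^2 = 0$ follows from the coassociativity of $\Csh$ combined with the associativity of $A$. The lift $P_* \to B_*(A)$ applies $\TotSh$ to the middle factor and the inclusion $X \hookrightarrow \Mon(X,\sigma)$ to the outer factors; its chain-map property is exactly Theorem~\ref{T:TotalSh}, lifted bilinearly.

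The main obstacle is establishing the acyclicity of $P_*$, which I would attack by constructing an explicit contracting homotopy $s_*\colon P_* \to P_{*+1}$. Via Proposition~\ref{PR:DeltaNormal}, a $\ZZ$-basis of $P_k$ is indexed by triples $(\boldsymbol{u}, x_1\cdots x_k, \boldsymbol{v})$ with $\boldsymbol{u},\boldsymbol{v}\in \Norm(X,\sigma)$ and $x_1\cdots x_k \in Cr_k(X,\sigma)$. The natural $s_k$ peels the last letter off $\boldsymbol{u}$ and inserts it as the new first entry of the critical string, followed by projection onto the critical quotient (killing any $\sigma$-fixed pair that may arise). Verifying $d\,s + s\,d = \Id$ in positive degrees reduces to combinatorial identities on quantum shuffles: these rely crucially on the decomposition~\eqref{E:DeltaDecomposition}, the absorbing property~\eqref{E:Absorb}, and the idempotency $\sigma\sigma=\sigma$, which together force non-critical correction terms to cancel in pairs. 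The delicate Koszul sign bookkeeping, in the spirit of~\eqref{E:BrDiffCosh}, is the real obstacle of the proof.

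The pseudo-unital case is treated by the same procedure, with $\oMon(X,\sigma,1)$, $Cr_k(X,\sigma,1)$, and the reduced normalization $\oDelta$ of Proposition~\ref{PR:PUIBS} in place of their unital counterparts; the two axioms defining a pseudo-unit ensure that insertions of the letter $1$ are correctly absorbed by the homotopy. The cochain statements then follow from the chain-level ones by applying $\Hom_{A^e}(-,M)$ to the two resolutions.
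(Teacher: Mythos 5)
Your overall architecture (a small free $A^e$-resolution $P_k = A\otimes \ZZ Cr_k(X,\sigma)\otimes A$, compared with the bar resolution via a lift of $\TotSh$ and the comparison theorem) is a legitimate route, and it is close in spirit to what the paper does: the paper also works with the complex $\ZZ N\times Cr_*(X,\sigma,1)\times N$ with two-sided coefficients in the monoid and then applies $M\otimes_{N\times N^{op}}-$ or $\Hom_{N\times N^{op}}(-,M)$. But the entire content of the theorem is concentrated in the step you label ``the main obstacle'' --- the acyclicity of $P_*$ --- and your proposed contracting homotopy does not work as stated. Already for $X=\{x\}$ with $\sigma=\Id$ (Example~\ref{EX:1element}), where $P_*$ is the Koszul resolution $0\to A\otimes\ZZ\{x\}\otimes A\to A\otimes A\to A\to 0$ of $A=\ZZ[x]$, the single-peel operator $x^m\otimes x^n\mapsto x^{m-1}\otimes x\otimes x^n$ gives $d_1s_0(x^m\otimes x^n)=x^m\otimes x^n-x^{m-1}\otimes x^{n+1}$, which together with $s_{-1}\epsilon$ does not yield the identity; one needs the full telescoping sum over all letters of $\boldsymbol{u}$. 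In the general idempotent case the corrected, iterated homotopy must interact with all $2k$ terms $d^{l}_{k;i}, d^{r}_{k;i}$ of the braided differential, each of which braids a strand past all the others before acting, and with the projection onto the critical quotient: every term you ``kill'' because a $\sigma$-fixed pair appears must be matched against another killed term, and nothing in your sketch explains why these cancellations occur. Asserting that this ``reduces to combinatorial identities on quantum shuffles'' relying on \eqref{E:DeltaDecomposition}, \eqref{E:Absorb} and idempotency is not a proof; it is a restatement of the problem.

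For contrast, the paper sidesteps the explicit homotopy entirely by running algebraic discrete Morse theory on the normalized bar complex of $N=(\oNorm(X,\sigma),\ast,\emptyw)$: it reverses the internal edges \eqref{E:Reversed1} of the bar graph, proves the resulting matching is well-founded using the partial order of right divisibility on the Coxeter monoid $C_l$ (Lemma~\ref{L:CoxMon}), identifies the critical cells with $N\times Cr_k(X,\sigma,1)\times N$, and then shows by a path analysis that the Morse differential coincides with the braided differential and that the induced inclusion \eqref{E:iota} coincides with $\TotSh_k$. That path analysis is exactly where the cancellations you would need for $ds+sd=\Id$ are actually carried out (the issues labelled a) and b) in Section~\ref{S:Proof}). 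If you want to pursue your homotopy route, you would essentially have to reproduce that analysis by hand; alternatively, you could adopt the Morse-theoretic framework, in which case the acyclicity comes for free from the general theorem once well-foundedness is checked. Note also one further point your sketch glosses over: the identification of the comparison map with the quantum symmetrizer is not automatic from the comparison theorem (which only gives \emph{some} homotopy equivalence); it requires either Theorem~\ref{T:TotalSh} to supply the explicit lift, as you indicate, or the paper's explicit computation of the paths defining \eqref{E:iota}.
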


As a consequence, one gets linear graded isomorphisms in (co)homology. According to Theorem~\ref{T:TotalSh}, in the cohomological case they are algebra isomorphism when the cup products are defined. Braided techniques thus allow computations of the Hochschild (co)homo\-logy of structure monoids. We illustrate their efficiency by examples.

Let us start with an elementary warm-up example.

\begin{example}\label{EX:1element}
Take the set $X = \{x\}$ with $\sigma(x,x) = (x,x)$. For these data, the structure monoid $\Mon(X,\sigma)$ is freely generated by~$x$, and there are only two critical words: $x$ and the empty word~$\emptyw$. Take an abelian group~$M$ with trivial $X$-actions. The complex $CrC_k(X,\sigma;M)$ then reads
$$0 \overset{0}{\longleftarrow} M \overset{0}{\longleftarrow} M \otimes \ZZ \{x\} \overset{0}{\longleftarrow} 0 \overset{0}{\longleftarrow} 0 \overset{0}{\longleftarrow} \ldots$$
Its homology is immediate to compute: it is $M$ in degrees~$0$ and~$1$, and $0$ elsewhere. Due to Theorem~\ref{T:BrHomIdempot}, this yields the homology of the much larger Hochschild complex
$$0 \overset{0}{\longleftarrow} M \overset{d_1^{H}}{\longleftarrow} M \otimes \ZZ [x] \overset{d_2^{H}}{\longleftarrow} M \otimes \ZZ[x]^{\otimes 2} \overset{d_3^{H}}{\longleftarrow} M \otimes \ZZ [x]^{\otimes 3} \overset{d_4^{H}}{\longleftarrow} \ldots$$
The quantum symmetrizer is given by the identity in degree~$0$, and by the linearization of $x \mapsto x$ in degree~$1$. Similar computations in cohomology yield $H^k = M$ for $k=0,1$, and $H^k = 0$ for $k>1$. Further, $(X, \sigma, x)$ is a PUIBS. The reduced structure monoid $\oMon(X,\sigma,x)$ has one element only. The only critical word for $(X, \sigma, x)$ is~$\emptyw$. Thus, for the same coefficients, the critical complex for $(X,\sigma,x)$ coincides with the normalized Hochschild complex for the one-element monoid, and reads
$$0 \overset{0}{\longleftarrow} M \overset{0}{\longleftarrow} 0 \overset{0}{\longleftarrow} 0 \overset{0}{\longleftarrow} \ldots$$
\end{example}

Next, we recover the classical small resolutions of free and symmetric algebras.

\begin{example}
Generalizing Example~\ref{EX:1element}, take any set~$X$ with $\sigma(x,y) = (x,y)$. Here $\Mon(X,\sigma)$ is the monoid $\langle X \rangle$ freely generated by~$X$, and $X \sqcup \{\emptyw\}$ is the set of critical words. Take a linear $X$-bimodule~$M$. The complex $CrC_k(X,\sigma;M)$ reads
$$0 \overset{0}{\longleftarrow} M \overset{d_1}{\longleftarrow} M \otimes \ZZ X \overset{0}{\longleftarrow} 0 \overset{0}{\longleftarrow} 0 \overset{0}{\longleftarrow} \ldots$$
where $d_1(m,x) = m \cdot x - x \cdot m$. Due to Theorem~\ref{T:BrHomIdempot}, this describes the Hochschild homology of $\langle X \rangle$ (equivalently, of $\ZZ \Mon(X,\sigma) = T(\ZZ X)$) as 
\begin{align*}
HH_0(\langle X \rangle;M) &= \raisebox{.1cm}{$M$} / \raisebox{-.1cm}{$m \cdot x - x \cdot m$},\\
HH_1(\langle X \rangle;M) &= \{\, (m_x)_{x \in X} \, | \, m_x \in M, \, \sum_{x \in X} (m_x \cdot x - x \cdot m_x) = 0 \, \},\\ 
HH_k(\langle X \rangle;M) &= 0 \hspace*{1cm} \text{ for } k > 1.
\end{align*}
Above only a finite number of~$m_x$ is allowed to be non-zero.
\end{example}

\begin{example}
As explained in Example~\ref{EX:Bubble}, the symmetric monoid $S(X)$ of a set~$X$, which we endow with an arbitrary total order, is the structure monoid of~$X$ with the idempotent braiding 
$\sigma(x,y) = (\, \min \{x,y \} \, , \, \max \{x,y \} \,)$. 
Here the set of critical words of length~$k$ coincides with
$$\Lambda_k(X) = \{\, x_1 \ldots x_k \, |\, x_i \in X, \, x_1 > \cdots > x_k \, \}.$$
For such words, the braided differential reads
\begin{align*}
d_k(m,x_1 \ldots x_k) &= \sum_{i=1}^k (-1)^{i-1} (m \cdot x_i - x_i \cdot m,x_1, \ldots, x_{i-1}, x_{i+1}, \ldots, x_k).
\end{align*}
According to Theorem~\ref{T:BrHomIdempot}, this complex computes $HH_*(S(X);M) = HH_*(\ZZ [X];M)$ (since $\ZZ S(X) = \ZZ [X]$). In particular, if the cardinality $\# X$ is finite, then this homology vanishes in degree $> \# X$. Further, if the bimodule~$M$ is symmetric, then the braided differentials are all zero, and one concludes
\begin{align*}
HH_*(S(X);M) &= HH_*(\ZZ [X];M) = M \otimes \ZZ \Lambda_*(X),\\
HH^*(S(X);M) &= HH^*(\ZZ [X];M) = \Map (\Lambda_*(X),M).
\end{align*}
If $M$ comes with a bilinear product compatible with the $X$-actions, then the cup product for critical cochains can be computed by 
$$f \smile g (x_1, \ldots, x_{p+q}) = \sum_{ \{1, \ldots, p+q\} = I \sqcup J, \# I = p } (-1)^{\langle I,J\rangle} f(x_{i_1},\ldots,x_{i_p}) g(x_{j_1},\ldots,x_{j_q}),$$  
where $f$ and $g$ are cochains of degrees $p$ and $q$ respectively.
\end{example} 

We finish with our main example, where we obtain original results.

\begin{example}
Recall that a factorized monoid $G=HK$ can be regarded as the reduced structure monoid $\oMon(X,\sigma,1)$, where $X = H \cup K$, and the idempotent braiding~$\sigma$ is described in Example~\ref{EX:Factorization}. The critical words are describes here by
$$Cr_k(X,\sigma,1) = \sqcup_{p+q=k} \overline{K}^{\times p} \times \overline{H}^{\times q},$$
where $\overline{K} = K \setminus \{1\}$, $\overline{H} =H \setminus \{1\}$. Let $M$ be a linear $G$-bimodule. Theorem~\ref{T:BrHomIdempot} then identifies the Hochschild homology of~$G$ with the total homology of the double complex $(M \otimes \ZZ (\overline{K}^{\times p} \times \overline{H}^{\times q}), d_{p,q}^v, d_{p,q}^h)$, where
\begin{align*}
d_{p,q}^v(m&,k_1, \ldots, k_p,h_1, \ldots, h_q) = (m \cdot k_1, k_2, \ldots, k_p,h_1, \ldots, h_q)\\
&+\sum_{i=1}^{p-1} (-1)^i (m, k_1, \ldots, k_{i-1}, k_{i}k_{i+1}, k_{i+2}, \ldots, k_p,h_1, \ldots, h_q)\\
&+ (-1)^{p}(k'_p \cdot m,k_1, \ldots, k_{p-1},h'_1, \ldots, h'_q),\\
& \qquad\qquad h'_1 \ldots  h'_q k'_p =b_q \cdots b_1 (k_p h_1 \ldots h_q);\\
d_{p,q}^h(m&,k_1, \ldots, k_p,h_1, \ldots, h_q) = (m \cdot h''_1,k''_1, \ldots, k''_{p-1}, k''_p,h_2, \ldots, h_q)\\
&+\sum_{i=1}^{q-1} (-1)^i (m,k_1, \ldots, k_p,h_1, \ldots, h_{i-1}, h_{i}h_{i+1}, h_{i+2}, \ldots, h_q)\\
&+ (-1)^{q}(h_q \cdot m,k_1, \ldots, k_p,h_1, \ldots, h_{q-1}),\\
& \qquad\qquad h''_1 k''_1 \ldots k''_p = b_1 \cdots b_p (k_1 \ldots k_p h_1).
\end{align*}
In all formulas from this example, the terms containing the element~$1$ are omitted. This total complex is much smaller than the one given by the bar resolution. Moreover, it has more structure, which allows for instance to apply the spectral sequence machinery for computations. In the case of a direct product $G=H \times K$, one has $\sigma(k,h) = (h,k)$ for all $k \in K, h \in H$; the formulas above then simplify, and, for trivial coefficients, recover the K\"{u}nneth formula. 
The cohomology of a factorized monoid is computed by a similar double complex. Suppose now that $M$ comes with a bilinear product compatible with the $G$-actions, in the sense of $g \cdot (m_1 m_2) = (g \cdot m_1) m_2$, $(m_1 m_2) \cdot g = m_1 (m_2 \cdot g)$, $(m_1 \cdot g) m_2 = m_1 (g \cdot m_2)$ for all $m_1,m_2 \in M, \, g \in G$. Then the cohomology groups of~$G$ with coefficients in~$M$ carry the cup product, which on the level of critical cochains corresponds to
\begin{align*}
(f \smile g) (k_1, \ldots, k_p,h_1, &\ldots, h_q) = \\
\sum_r (-1)^{(p-r)(s-r)}& f (k_1, \ldots, k_r,h'_1, \ldots, h'_{s-r}) g (k'_{r+1}, \ldots, k'_p,h_{s-r+1}, \ldots, h_q),\\
&h'_1 \ldots h'_{s-r} k'_{r+1} \ldots k'_p = b_{p-r,s-r}(k_{r+1} \ldots k_p h_1 \ldots h_{s-r}).
\end{align*}
Here $f$ is an $s$-cochain, and $g$ is a $t$-cochain, with $s+t = p+q$; the element $b_{p-r,s-r}$ of the Coxeter monoid $C_{p+s-2r}$ is defined in Figure~\ref{P:YBE}\rcircled{B}. Indeed, the remaining terms of $\Csh^{s,t}(k_1, \ldots, k_p,h_1, \ldots, h_q)$ necessarily contain the element~$1$.
\end{example}

\section{Proofs}\label{S:Proof}

\subsection*{Proof of Proposition~\ref{PR:InvarSubcx}}

We will show that $T(X;R)$ is a co-ideal of $\ZZ\langle X \rangle$ for the shuffle co-product~$\Csh$ from~\eqref{E:qu_cosh}. This directly implies most of the statements.

For some $\boldsymbol{w} \in X^{\times i}$, $\boldsymbol{v} \in X^{\times j}$, and $\boldsymbol{r} \in R$, consider the term $(-1)^{|s|} T_{s^{-1}} \boldsymbol{w}\boldsymbol{r}\boldsymbol{v}$ of $\Csh^{p,q} \boldsymbol{w}\boldsymbol{r}\boldsymbol{v}$, with $s\in Sh_{p,q}$, $p+q = i+j+2$. If $s^{-1}$ sends both $i+1$ and $i+2$ to $\{1,\ldots, p\}$, then conditions \ref{I:Invar}-\ref{I:Invar'} imply $T_{s^{-1}} \boldsymbol{w}\boldsymbol{r}\boldsymbol{v} \in T_p(X;R) \otimes_{\ZZ} \ZZ X^{\times q}$. The case of $s^{-1}$ sending $i+1$ and $i+2$ to $\{p+1,\ldots, p+q\}$ is similar. Finally, the terms with $s^{-1}(i+1) \le p, s^{-1}(i+2) \ge p+1$ annihilate those with $s^{-1}(i+1) \ge p+1, s^{-1}(i+2) \le p$ due to~\ref{I:Fixed} and to the use of the signed shuffle coproduct. Summarizing, one gets $\Csh^{p,q} \boldsymbol{w}\boldsymbol{r}\boldsymbol{v} \in T_p(X;R) \otimes_{\ZZ} \ZZ X^{\times q} + \ZZ X^{\times p} \otimes_{\ZZ} T_q(X;R)$, as announced.

The operation~$\circ$ in Point~3 requires more work. Take two maps $f \in C^p(X; M; R)$, $g \in C^q(X; M; R)$, where this time $p+q = i+j+3$. The co-ideal argument above yields the vanishing of the terms $d^{i+j+2;I_2,I_1}(f) \ast d^{i+j+2;J_1,J_2} (g)(\boldsymbol{w}\boldsymbol{r}\boldsymbol{v})$ of $f \circ g (\boldsymbol{w}\boldsymbol{r}\boldsymbol{v})$, except when $\# I_1 + \# J_1+1$ is $i+1$ or $i+2$. Consider the first case, the second one being similar. Depending on $\# I_1 + \# J_1$ lying in~$I_1$ or in~$J_1$, the term in question evaluates $g$ or~$f$ on an element of $T(X;R)$, and thus vanishes; to see this, use properties \ref{I:Invar}-\ref{I:Invar'} of~$R$, and work with Figure~\ref{P:Circle}. As a result, $f \circ g$ lies in $C^{p+q-1}(X; M; R)$. 

\subsection*{Proof of Proposition~\ref{PR:UnitSubcx}}

Take a critical word of the form $\boldsymbol{w}1\boldsymbol{v} \in Cr_{p+q+1}(X,\sigma)$,  $\boldsymbol{w} \in X^{\times p}$, $\boldsymbol{v} \in X^{\times q}$. The remaining part of our ideal is taken care of by Proposition~\ref{PR:InvarSubcx}. We will show that, up to $\ZZ X \otimes_{\ZZ} T_{p+q}(X,\sigma,1)$, one has $\Csh^{1,p+q}\boldsymbol{w}1\boldsymbol{v} = (-1)^p (1, \boldsymbol{w}\boldsymbol{v})$. Symmetrically, $\Csh^{p+q,1} = (-1)^q  (\boldsymbol{w}\boldsymbol{v}, 1)$ up to $T_{p+q}(X,\sigma,1) \otimes_{\ZZ} \ZZ X$. Since~$1$ acts trivially on all modules, these two terms yield canceling summands in the total differentials. The remaining terms yield summands  lying in $T_{p+q}(X,\sigma,1)$ (with appropriate coefficients on the left and, when necessary, on the right).

Write $\Csh^{1,p+q}\boldsymbol{w}1\boldsymbol{v}$ as $\sum_i (-1)^{i-1}\theta_i  (\boldsymbol{w}1\boldsymbol{v})$, where $\theta_i = b_1\cdots b_{i-1} \in C_{p+q+1}$ acts on $X^{\times(p+q+1)}$ via the braiding~$\sigma$.
\begin{itemize}
 \item For $i \le p$, $\theta_i$ affects only the $\boldsymbol{w}$-part of our word, and thus preserves the letter~$1$. 
 \item For $i=p+2$,  $b_{p+1} (\boldsymbol{w}1\boldsymbol{v}) = \boldsymbol{w}v_1 1 v_2 \ldots v_q$, since $\sigma(1,v_1) = (v_1,1)$ (otherwise the definition of pseudo-unit would imply $\sigma(1,v_1) = (1,v_1)$, which cannot happen in a critical word). This letter~$1$ is not affected by $b_1\cdots b_{p}$, and thus remains in $\theta_{p+2} (\boldsymbol{w}1\boldsymbol{v})$.
 \item For $i>p+2$, write $b_{p+2}\cdots b_{i-1} (\boldsymbol{w}1\boldsymbol{v}) = \boldsymbol{w}1\boldsymbol{v'}$. The subword $v'_1v'_2$ is normal, since the last instance of~$\sigma$ acted at this position. Now, either $\sigma(1,v'_1) = (1,v'_1)$, in which case the normal subword $v'_1v'_2$ survives in $\theta_i (\boldsymbol{w}1\boldsymbol{v})$; or $\sigma(1,v'_1) = (v'_1,1)$, and this letter~$1$ survives in $\theta_i (\boldsymbol{w}1\boldsymbol{v})$.
 \end{itemize} 
In all these cases, $\theta_i (\boldsymbol{w}1\boldsymbol{v})$ lies in $\ZZ X \otimes_{\ZZ} T_{p+q}(X,\sigma,1)$. It remains to analyze $\theta_{p+1}$. Again, criticality implies $\sigma(w_p,1) = (1,w_p)$. The relation $\sigma(w_{p-1},1) = (w_{p-1},1)$ would give the normality of the word $w_{p-1} 1 w_p$, and thus $w_{p-1} w_p$ (recall the definition of pseu\-do-unit), which contradicts the criticality of $\boldsymbol{w}1\boldsymbol{v}$. Thus $\sigma(w_{p-1},1) = (1,w_{p-1})$. This argument iterates until $\sigma(w_{1},1) = (1,w_{1})$, and yields $\theta_{p+1} (\boldsymbol{w}1\boldsymbol{v}) = 1\boldsymbol{w}\boldsymbol{v}$, as desired. 

Next, the words in~$X^*$ containing the letter~$1$ generate a subgroup of $\ZZ\langle X \rangle$ closed under the action of the Coxeter monoids~$C_k$. It is then a co-ideal of $\ZZ\langle X \rangle$. Thus so is $T(X,\sigma,1)$. As a result, $CrC^*(X,\sigma,1; M)$ is a differential graded sub-algebra of $(\, \Map (X^{\times k},M), d^k, \smile\, )$ when $M$ is a bimodule-algebra. It remains to show that in this case the circle product also restricts to $CrC^*(X,\sigma,1; M)$. This is easiest to explain on the example of the term of $f \circ g$ depicted in Figure~\ref{P:Circle}. The crossings where an argument $x_i = 1$ can be ``lost'' are 
\begin{itemize}
 \item those between $I_1$- and $J_2$-strands, and
 \item those between $J_1$- and $I_2$-strands.
\end{itemize} 
Suppose that it happened at the crossing marked in Figure~\ref{P:CircleChase1} (only the relevant part of the diagram is shown). It means that the two strands around this crossing are colored as follows: \raisebox{-.2cm}{\begin{tikzpicture}[scale=0.5]
\draw [rounded corners, myred] (0,0)--(0,0.25)--(1,0.75)--(1,1);
\draw [rounded corners, mygreen] (1,0)--(1,0.25)--(0,0.75)--(0,1);
\node  at (0,0) [left,myred] {$\scriptstyle \boldsymbol{x}$};
\node  at (1,0) [right,mygreen] {$\scriptstyle \boldsymbol{1}$};
\node  at (0,1) [left,mygreen] {$\scriptstyle \boldsymbol{x}$};
\node  at (1,1) [right,myred] {$\scriptstyle \boldsymbol{1}$};
\end{tikzpicture}}. Thus this crossing can be omitted without changing the top colors. Figure~\ref{P:CircleChase1} then proves the relation $\sigma (x''_5,x''_6) = (x''_5,x''_6)$ (recall that $\sigma$ is idempotent). So the map $f \in CrC^5(X,\sigma,1; M)$ is evaluated on a non-critical word, and thus vanishes. 

\begin{figure}[!h]\centering
\begin{tikzpicture}[xscale=0.5,yscale=0.4]
\draw [rounded corners, myred] (3,3.5)--(3,4)--(8.5,8)--(8.5,9);
\draw [rounded corners, myblue] (3,7.5)--(3,9);
\draw [rounded corners, myred] (4,3.5)--(4,4)--(9.5,8)--(9.5,9);
\draw [rounded corners, myblue] (4,7.5)--(4,9);
\draw [rounded corners] (5,3.5)--(5,9);
\draw [rounded corners, mygreen] (8,3.5)--(8,4.5)--(6,7.5)--(6,9);
\draw [rounded corners, mygreen,shift={(1,0)}] (8,3.5)--(8,4.5)--(6,7.5)--(6,9);
\draw [rounded corners, myviolet] (6,3.5)--(6,4.5)--(3,6);
\draw [dotted, thick] (2.5,4)--(6.5,4);
\draw [dotted, thick] (2.5,8)--(7.5,8);
\draw [red, thick] (6.9,6) circle [radius=0.4];
\node at (2.5,4.4) {$\scriptstyle g$};
\node at (2.3,8.3) {$\scriptstyle f$};
\node at (8.5,9) [above, myred]{$\scriptstyle x''_1$};
\node at (3,9) [above, myblue]{$\scriptstyle x''_2$};
\node at (9.5,9) [above, myred]{$\scriptstyle x''_3$};
\node at (4,9) [above, myblue]{$\scriptstyle x''_4$};
\node at (5,9) [above]{$\scriptstyle x''_5$};
\node at (6,9) [above, mygreen]{$\scriptstyle x''_6$};
\node at (7,9) [above, mygreen]{$\scriptstyle x''_7$};
\node at (3,3.5) [below, myred]{$\scriptstyle I_1$};
\node at (4,3.5) [below, myred]{$\scriptstyle I_1$};
\node at (9,3.5) [below, mygreen]{$\scriptstyle J_2$};
\node at (8,3.5) [below, mygreen]{$\scriptstyle J_2$};
\node at (6,3.5) [below, myviolet]{$\scriptstyle I_2$};
\node at (11,6) {$\;\leadsto\;$};
\end{tikzpicture} 
\begin{tikzpicture}[xscale=0.5,yscale=0.4]
\draw [rounded corners, red, thick, dashed] (4,3.5)--(4,4)--(6.8,5.8)--(6.8,6.3)--(6,7.5)--(6,9);
\draw [rounded corners, red, thick, dashed] (8,3.5)--(8,4.5)--(7.2,5.8)--(7.2,6.3)--(9.5,8)--(9.5,9);
\draw [rounded corners, myred] (3,3.5)--(3,4)--(8.5,8)--(8.5,9);
\draw [rounded corners, myblue] (3,7.5)--(3,9);
\draw [rounded corners, myblue] (4,7.5)--(4,9);
\draw [rounded corners] (5,3.5)--(5,9);
\draw [rounded corners, mygreen,shift={(1,0)}] (8,3.5)--(8,4.5)--(6,7.5)--(6,9);
\draw [rounded corners, myviolet] (6,3.5)--(6,4.5)--(3,6);
\draw [dotted, thick] (2.5,4)--(6.5,4);
\draw [dotted, thick] (2.5,8)--(7.5,8);
\node at (2.5,4.4) {$\scriptstyle g$};
\node at (2.3,8.3) {$\scriptstyle f$};
\node at (8.5,9) [above, myred]{$\scriptstyle x''_1$};
\node at (3,9) [above, myblue]{$\scriptstyle x''_2$};
\node at (9.5,9) [above, myred]{$\scriptstyle x''_3$};
\node at (4,9) [above, myblue]{$\scriptstyle x''_4$};
\node at (5,9) [above]{$\scriptstyle x''_5$};
\node at (6,9) [above, mygreen]{$\scriptstyle x''_6$};
\node at (7,9) [above, mygreen]{$\scriptstyle x''_7$};
\node at (3,3.5) [below, myred]{$\scriptstyle I_1$};
\node at (4,3.5) [below, myred]{$\scriptstyle I_1$};
\node at (9,3.5) [below, mygreen]{$\scriptstyle J_2$};
\node at (8,3.5) [below, mygreen]{$\scriptstyle J_2$};
\node at (6,3.5) [below, myviolet]{$\scriptstyle I_2$};
\node at (11,6) {$\;\overset{\text{R}\mathrm{III}}{\leadsto}\;$};
\end{tikzpicture} 
\begin{tikzpicture}[xscale=0.5,yscale=0.4]
\draw [rounded corners, red, thick, dashed] (4,3.5)--(4,6)--(6,7)--(6,9);
\draw [rounded corners, red, thick, dashed] (8,3.5)--(8,4.5)--(7.2,5.8)--(7.2,6.3)--(9.5,8)--(9.5,9);
\draw [rounded corners, myred] (3,3.5)--(3,4)--(8.5,8)--(8.5,9);
\draw [rounded corners, myblue] (3,7.5)--(3,9);
\draw [rounded corners, myblue] (4,7.5)--(4,9);
\draw [rounded corners] (5,3.5)--(5,9);
\draw [rounded corners, mygreen,shift={(1,0)}] (8,3.5)--(8,4.5)--(6,7.5)--(6,9);
\draw [rounded corners, myviolet] (6,3.5)--(6,4.5)--(3,6);
\draw [dotted, thick] (2.5,4)--(6.5,4);
\draw [dotted, thick] (2.5,8)--(7.5,8);
\node at (2.5,4.4) {$\scriptstyle g$};
\node at (2.3,8.3) {$\scriptstyle f$};
\node at (8.5,9) [above, myred]{$\scriptstyle x''_1$};
\node at (3,9) [above, myblue]{$\scriptstyle x''_2$};
\node at (9.5,9) [above, myred]{$\scriptstyle x''_3$};
\node at (4,9) [above, myblue]{$\scriptstyle x''_4$};
\node at (5,9) [above]{$\scriptstyle x''_5$};
\node at (6,9) [above, mygreen]{$\scriptstyle x''_6$};
\node at (7,9) [above, mygreen]{$\scriptstyle x''_7$};
\node at (3,3.5) [below, myred]{$\scriptstyle I_1$};
\node at (4,3.5) [below, myred]{$\scriptstyle I_1$};
\node at (9,3.5) [below, mygreen]{$\scriptstyle J_2$};
\node at (8,3.5) [below, mygreen]{$\scriptstyle J_2$};
\node at (6,3.5) [below, myviolet]{$\scriptstyle I_2$};
\end{tikzpicture} 
\caption{If in a term of $f \circ g$ an $I_1$-$J_2$ crossing has no effect on colors, then this term evaluates~$f$ on a critical word.}\label{P:CircleChase1}
\end{figure}
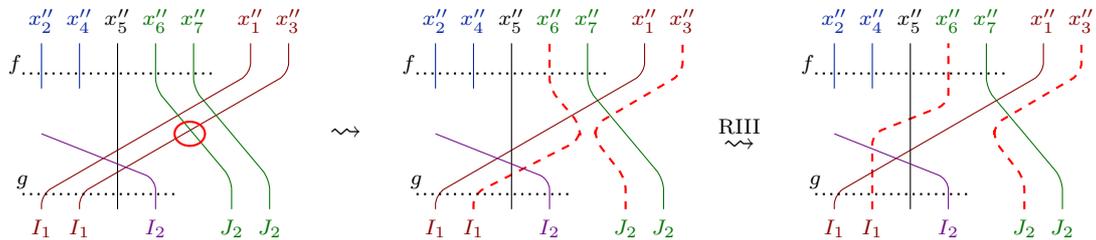

\subsection*{Proof of Theorem~\ref{T:TotalSh}}

\begin{enumerate}
\item The quantum symmetrizer can be seen as the action of 
$$Sh_k:= \sum_{s \in S_k} (-1)^{|s|} T_s \; \in \ZZ B_k^{+}$$ 
on $\ZZ X^{\times k}$ via~$\sigma$. For all $1\le i < k$, this linear combination decomposes in two ways:
\begin{align}\label{E:TotShLocal}
Sh_k &= (1-b_i)t_i = t'_i(1-b_i) &\text{ for some } t_i,t'_i \in \ZZ B_k^{+}.
\end{align} 
The first decomposition yields $\mu_i \TotSh_k = 0$, where $\mu_i$ multiplies the factors $i$ and $i+1$ of $\Mon(X,\sigma)^{\times k}$. From this one deduces that the $Sh_k$ induce morphisms between the announced (co)\-chain complexes. The second decomposition implies that $\TotSh_k$ vanishes on $k$-tuples with $\sigma(x_i,x_{i+1}) = (x_i,x_{i+1})$.

\item In the pseudo-unital case, the $\oTotSh_k$ yield morphisms of complexes since they are compositions of three components that do so. Further, if $x_i = 1$ for some $i$, then the word $T_s(x_1 \ldots x_k)$ still contains a letter~$1$ for all $s \in S_k$. Thus the reduced quantum symmetrizer vanishes on the $\ZZ\langle X \rangle 1 \ZZ\langle X \rangle$ part of $T_k(X,\sigma,1)$. The $\ZZ\langle X \rangle R_- \ZZ\langle X \rangle$ part is taken care of by the $\TotSh_k$ component of~$\oTotSh_k$.

\item It is a consequence of the following decomposition of the quantum symmetrizer:
$$\TotSh_{p+q} = (\TotSh_p \otimes \TotSh_q) \Csh^{p,q}.$$
\end{enumerate}

\subsection*{Proof of Theorem~\ref{T:BrHomIdempot}}

Our proof is based on the machinery of the algebraic discrete Morse theory in the context of Anick resolutions. For more details see the original papers \cite{Anick,Skol,JoWe} or a recent survey in~\cite{Lopatkin}.

We will work only in the pseudo-unital setting, since an idempotent braiding~$\sigma$ on~$X$, extended to $X^+= X\sqcup \{1\}$ by $\sigma(x,1) = \sigma(1,x) = (1,x)$ for all~$x$, defines a PUIBS such that the monoid $\oMon(X^+,\sigma,1)$ recovers $\Mon(X,\sigma)$, being a module over $(X,\sigma)$ is equivalent to being a module over $(X^+,\sigma,1)$, and the critical complexes for $(X^+,\sigma,1)$ and for $(X,\sigma)$ coincide.

According to Proposition~\ref{PR:PUIBS}, the reduced structure monoid $\oMon(X,\sigma,1)$ can be replaced with the monoid $N := (\oNorm(X,\sigma), \ast, \emptyw)$ (Notation~\ref{N:PUIBSast}). Further, it suffices to work with the bar differential on $\ZZ N \times N^* \times N$ and with the critical complexes $\ZZ N \times Cr_*(X,\sigma,1) \times N$. That is, we take~$N$ as two-sided coefficients (Example~\ref{EX:StructureSGAsBimodAlg}). Applying the functors $M \otimes_{N \times N^{op}} -$\, or $\Hom_{N \times N^{op}}(-, M)$, one gets the (co)chain complexes from the announced quasi-isomorphisms.  

Define a weighted oriented graph~$\Gamma$ as follows. Its vertex set is $N \times \overline{N}^* \times N$, with $\overline{N} = N \setminus \{ \emptyw \}$. The vertices from $N \times \overline{N}^{\times k} \times N$ are said to have \emph{degree}~$k$. The edges encode the multiplication of neighbouring words---i.e., connect $(\boldsymbol{w}_0, \ldots, \boldsymbol{w}_{k+1})$ to $(\boldsymbol{w}_0, \ldots, \boldsymbol{w}_i \ast \boldsymbol{w}_{i+1},\ldots, \boldsymbol{w}_{k+1})$ for all $0 \le i \le k$, whenever the latter tuple lies in $N \times \overline{N}^{\times(k-1)} \times N$. Such an edge $e$ is declared to be of \emph{type} $i$ and \emph{weight} $w(e) = (-1)^i$. Edges of type $0 < i <k$ are called \emph{internal}. This graph models the normalized bar complex: the bar differential is computed by the formula 
 $$d^{bar}_k(v) = \sum_{e \colon v \to v'} w(e) v'.$$ Now, reverse in~$\Gamma$ all internal edges 
 \begin{align}\label{E:Reversed1}
& (\boldsymbol{w}_0,x_1, \ldots, x_i, \boldsymbol{w}_{i+1},\ldots, \boldsymbol{w}_{k+1}) \to (\boldsymbol{w}_0,x_1, \ldots, x_i\boldsymbol{w}_{i+1},\ldots, \boldsymbol{w}_{k+1}) 
 \end{align}
 such that all $x_s \in X$, all $\boldsymbol{w}_t \in N$, $\boldsymbol{w}_t \neq \emptyw$ for $t \neq 0,k+1$, the word $x_1 \ldots x_i$ is critical, and the word $x_i\boldsymbol{w}_{i+1}$ is normal (i.e., equals $x_i\ast \boldsymbol{w}_{i+1}$). Change the weight of all reversed edges to the opposite. Denote this new graph by~$\Gamma'$. 
  
Let us check that in~$\Gamma'$ any vertex $(\boldsymbol{w}_0, \ldots, \boldsymbol{w}_{k+1})$ is the beginning of a finite number of paths only. Consider the total concatenated word $\boldsymbol{w} = \boldsymbol{w}_0 \ldots \boldsymbol{w}_{k+1}$. Let it contain $l$ letters. Moving along the edges of~$\Gamma'$, one can change this amalgamated word only by the action of an element of the Coxeter monoid~$C_l$ (possibly followed by erasing some $1$s---in which case one can apply induction on~$l$). Further, if during this process one can reach a word $b (\boldsymbol{w})$ from $c (\boldsymbol{w})$, with $b,c \in C_l$, then~$c$ right divides~$b$. Lemma~\ref{L:CoxMon} guarantees that the relation ``right divisor'' is a partial order on the finite set~$C_l$. Thus it suffices to show that $(\boldsymbol{w}_0, \ldots, \boldsymbol{w}_{k+1})$ starts only a finite number of \emph{stable} paths---i.e., in which the total word is preserved. A stable path can have at most $l-1$ reversed edges. Indeed, after a reversed edge of the form~\eqref{E:Reversed1}, one cannot follow a new reversed edge until $\boldsymbol{w}_0$ absorbs all the letters $x_1, \ldots, x_i$ (and no letters can ever leave $\boldsymbol{w}_0$). Looking at how the degrees of the vertices behave, one concludes that the length of a stable path from $(\boldsymbol{w}_0, \ldots, \boldsymbol{w}_{k+1})$ is at most $k+2(l-1)$. Since any vertex starts a finite number of edges, we are done.

Further, any vertex belongs to at most one reversed edge. The vertices disjoint from the reversed edges are precisely those of the form $(\boldsymbol{w}_0,x_1, \ldots, x_k, \boldsymbol{w}_{k+1}) \in N \times Cr_k(X,\sigma,1) \times N =:CrV_k$. They are called \emph{critical}.
 
 Define $d^{cr}_k \colon \ZZ CrV_k \to \ZZ CrV_{k-1}$ as the linearization of 
 $$d^{cr}_k(v) = \sum_{v' \in CrV_{k-1}} (\sum_{p \in P'(v,v')}w(p))v',$$ where $v \in CrV_k$, $P'(v,v')$ denotes the set of oriented paths from~$v$ to~$v'$ in~$\Gamma'$, and the weight $w(p)$ of a path $p$ is the product of the weights of its edges. The algebraic discrete Morse theory tells us that the $d^{cr}_k$ define a differential, and that the maps
\begin{align}
(\ZZ N \times \overline{N}^{\times k} \times N,d^{bar}_k) &\,\longleftrightarrow\, (\ZZ CrV_k,d^{cr}_k),\notag\\
N \times \overline{N}^{\times k} \times N \ni\; v &\,\overset{\pi}{\longmapsto}\, \sum_{v' \in CrV_{k}} (\sum_{p \in P'(v,v')}w(p))v',\label{E:pi}\\
\sum_{v \in N \times \overline{N}^{\times k} \times N} (\sum_{p \in P'(v',v)}w(p))v &\,\overset{\iota}{\longmapsfrom}\, v' \; \in CrV_{k}\label{E:iota}
\end{align}
 yield a quasi-isomorphism of complexes. It remains to identify the complex on the right with the one defining the critical braided homology of $(X,\sigma,1)$ with coefficients in~$N$. Since both complexes share the same underlying graded abelian group, it suffices to compare $d^{cr}_k$ with the critical version of the braided differential. 
 
 It is thus essential to understand paths $p \in P'(v,v')$ for given $v \in CrV_{k}$, $v' \in CrV_{k-1}$. Such a path has to alternate non-reversed edges with reversed ones, since the reversed edges are pairwise disjoint and terminate in non-critical vertices. A vertex $(\boldsymbol{w}_0, \ldots, \boldsymbol{w}_{k+1})$ where the internal words $\boldsymbol{w}_1, \ldots, \boldsymbol{w}_k$ are one-letter words is never a source of a reversed edge. A vertex whose internal words are one-letter except for one two-letter word $xy \in N$ can only be a source of a reversed edge that splits $xy$ into two words $x,y$. Thus $p$ begins with a sequence of internal two-edge segments of the form 
\begin{align}\label{E:ZigZagPath}
(\ldots, x_i, x_{i+1}, \ldots) \to (\ldots, x_i \ast x_{i+1} = x'_{i+1} x'_i, \ldots) \to (\ldots, x'_{i+1}, x'_i, \ldots),
\end{align}
 where all the $x$s are letters, and $(x'_{i+1}, x'_i) = \sigma(x_i, x_{i+1}) \neq (x_i, x_{i+1})$. This is precisely the action of the generator~$b_i$ of~$C_k$ via~$\sigma$. The weight of each such two-edge segment is $-1$. The remainder of~$p$ is a single edge of type~$i$ and weight $(-1)^i$. We study separately three possibilities for~$i$:
\begin{enumerate}
 \item $i=0$. In this case, the two-edge segments should be of types $j-1, j-2, \ldots, 1$, in this order, for some $1 \le j \le k$. Otherwise, the overall effect of the two-edge segments would be the action of $b b_1 \cdots b_{j-1} \in C_k$, with a non-trivial~$b$ not containing~$b_1$. The action of~$b$ would then prevent the final vertex~$v'$ from being critical.
 \item $i=k$. Similarly, the two-edge segments should be of types $j, j+1, \ldots, k-1$.
 \item $0<i<k$. This means that our path finishes with an internal edge 
$$(\boldsymbol{w}_0, x_1,\ldots) \to (\boldsymbol{w}_0, x_1,\ldots, x_{i-1},x_i \ast x_{i+1}, x_{i+1}, \ldots),$$ 
where the product $y = x_i \ast x_{i+1}$ consists of one letter. Assume that $\sigma(x_i, x_{i+1}) = (1,y)$, the case $(y,1)$ being symmetric. An argument similar to that used for type~$0$ shows that the two-edge segments should be of types $j-1, j-2, \ldots, i+1$, in this order, for some $i < j \le k$. An argument similar to that from the proof of Proposition~\ref{PR:UnitSubcx} yields $b_1 \cdots b_{i-1}(x_1\ldots x_{i-1} 1) = 1 x_1\ldots x_{i-1}$.
\end{enumerate}  
Combining the three cases, one recovers all the components $d^l_{k;j}$ and $d^r_{k;j}$ of the braided differentials, with the correct signs. They are interpreted differently according to whether or not the letter~$1$ appears in their step-by-step computation. It remains to check that all the above paths are valid, provided that their final vertex is critical. Two issues can occur.
\begin{enumerate}[label=\alph*)]
\item\label{I:PB1} One does not necessarily have a reversed edge 
$$(\ldots, x_{i-1},x'_{i+1} x'_i, \ldots) \to (\ldots, x_{i-1},x'_{i+1}, x'_i, \ldots).$$ 
In the case of a path corresponding to $d^l$, this happens only if $i \ge 2$ and $x_{i-1}x'_{i+1}x'_i$ is a normal word. But then the action of~$b_{i-1}$ on $\ldots x_{i-1} x'_{i+1}  x'_i \ldots$ is trivial, and the~$b_{u}$ with $u < i-1$ do not touch the normal subword $x'_{i+1} x'_i$. Then the final vertex cannot be critical. The $d^r$ case is analogous.
\item\label{I:PB2} The edge $(\ldots, x_i, x_{i+1}, \ldots) \to (\ldots, x_i \ast x_{i+1}, \ldots)$ might be unavailable since it was reversed in~$\Gamma'$. The word $x_ix_{i+1}$ is then normal. An argument analogous to Point~1 shows that the final vertex is then not critical.
\end{enumerate}

To conclude, one should identify the map~\eqref{E:iota} with the quantum symmetrizer. This is done by a study of paths $p \in P'(v',v)$ for a critical vertex~$v'$ and an arbitrary vertex~$v$ of the same degree~$k$. Repeating the arguments above, one sees that such a path consists of two-edge segments~\eqref{E:ZigZagPath} for $i$ taking values 
$$1, \ldots, i_{2}; \ldots; k-2, \ldots, i_{k-1}; k-1, \ldots, i_{k},$$
in this order, for some $i_2 \le 2, \ldots, i_k \le k$; here $j-1,j$ is considered to be an empty sequence. Such a path transforms $x_1 \ldots x_k$ into $T_s(x_1 \ldots x_k)$, where
$$s = s_1 \cdots s_{i_2} \cdots\cdots s_{k-2} \cdots s_{i_{k-1}} s_{k-1} \cdots s_{i_k}.$$
Every permutation~$s$ has a unique decomposition of this type. Looking at the weights of such paths, one concludes that the map~\eqref{E:iota} coincides with~$\TotSh_k$, modulo the terms corresponding to the paths which are unavailable for the reasons of types~\ref{I:PB1} and~\ref{I:PB2} above. Repeating the above treatment of these two issues, one sees that for such a path a normalized pair appears to the left of some~$j$ after two-edge segments $j-1, \ldots, i_{j}$. So the first $j-1$ letters of our vertex form a critical word at this stage. For each unavailable path, choose the maximal~$j$ with this property. Now, the terms of $\TotSh_k$ corresponding to unavailable paths with the same value of~$j$ and the same part $j-1, \ldots, i_{j}; \ldots; k-1, \ldots, i_{k}$  sum up to
$$\pm (\TotSh_{j-1} \otimes \Id)b_{j-1} \cdots b_{i_{j}} \cdots \cdots b_{k-1} \cdots b_{i_{k}}.$$
Since $\TotSh_{j-1}$ vanishes on all critical words, we are done.

\bigskip
\addcontentsline{toc}{section}{References}
\footnotesize
\bibliographystyle{alpha}
\bibliography{biblio}
\end{document}